\newtheorem{theorem}{Theorem}
\newtheorem{assumption}{Assumption}
\newtheorem{proposition}{Proposition}
\newtheorem{lemma}{Lemma}
\newtheorem{definition}{Definition}
\newenvironment{fminipage}%
  {\begin{Sbox}\begin{minipage}}%
  {\end{minipage}\end{Sbox}\fbox{\TheSbox}}
\newlength{\widebarargwidth}
\newlength{\widebarargheight}
\newlength{\widebarargdepth}
\long\def\@makecaption#1#2{
       \vskip 0.8ex
       \setbox\@tempboxa\hbox{\small {\bf #1:} #2}
       \parindent 1.5em
       \dimen0=\hsize
       \advance\dimen0 by -3em
       \ifdim \wd\@tempboxa >\dimen0
               \hbox to \hsize{
                       \parindent 0em
                       \hfil
                       \parbox{\dimen0}{\def\baselinestretch{0.96}\small
                               {\bf #1.} #2
                               }
                       \hfil}
       \else \hbox to \hsize{\hfil \box\@tempboxa \hfil}
       \fi
       }
\newcommand{\EE}{\ensuremath{{\mathbb{E}}}}
\newcommand{\R}{\mathbb R}
\newcommand{\sign}{\operatorname{\sf{sign}}}
\newcommand{\p}{\operatorname{\mathbbm{P}}}
\newcommand{\E}{\operatorname{\mathbbm{E}}}
\newcommand{\cB}{\mathcal{B}}
\newcommand{\cR}{\mathcal{R}}
\newcommand{\cE}{\mathcal{E}}
\newcommand{\cN}{\mathcal{N}}
\newcommand{\cU}{\mathcal{U}}
\newcommand{\cH}{\mathcal{H}}
\newcommand{\eps}{\epsilon}
\newcommand{\bone}{\mathbf{1}}
\newcommand{\bbone}{\mathbbm{1}}
\newcommand{\Bin}{\operatorname{\mathsf{Bin}}}
\newcommand{\subG}{\operatorname{\mathsf{subG}}}
\newcommand{\tr}{\operatorname{\mathsf{tr}}}
\newcommand{\Var}{\operatorname{\mathsf{Var}}}
\begin{document}

\begin{center}

{\bf{\LARGE{Sharp analysis of EM for learning mixtures of \\pairwise differences}}}

\vspace*{.2in}

{\large{
\begin{tabular}{ccc}
Abhishek Dhawan$^\star$, Cheng Mao$^\star$, Ashwin Pananjady$^{\dagger, \ddagger}$
\end{tabular}
}}
\vspace*{.2in}

\begin{tabular}{c}
$^\star$School of Mathematics \\
$^\dagger$School of Industrial and Systems Engineering \\
$^\ddagger$School of Electrical and Computer Engineering \\
Georgia Institute of Technology
\end{tabular}

\vspace*{.2in}

\today

\vspace*{.2in}

\begin{abstract}
We consider a symmetric mixture of linear regressions with random samples from the pairwise comparison design, which can be seen as a noisy version of a type of Euclidean distance geometry problem. We analyze the expectation-maximization (EM) algorithm locally around the ground truth and establish that the sequence converges linearly, providing an $\ell_\infty$-norm guarantee on the estimation error of the iterates. Furthermore, we show that the limit of the EM sequence achieves the sharp rate of estimation in the $\ell_2$-norm, matching the information-theoretically optimal constant. We also argue through simulation that convergence from a random initialization is much more delicate in this setting, and does not appear to occur in general. Our results show that the EM algorithm can exhibit several unique behaviors when the covariate distribution is suitably structured.
\end{abstract}
\end{center}

\tableofcontents







\section{Introduction}
\emph{Mixtures of linear regressions} are classical methods used to model heterogeneous populations~\citep{jordan1994hierarchical,xu1994alternative,viele2002modeling} and have been widely studied in recent years (see, e.g.,~\citet{chaganty2013spectral,chen2017convex,balakrishnan2017,li2018learning,kwon2019global,chen2020learning}). A common approach is to apply a spectral algorithm to initialize the parameters of interest, and then run a \emph{locally convergent} nonconvex optimization algorithm; alternatively, one could even run the nonconvex algorithm from a random initialization.
These nonconvex optimization algorithms have been extensively analyzed under Gaussian assumptions on the covariates (see, e.g.~\citet{balakrishnan2017,klusowski2019estimating,chen2019gradient,kwon2020converges}), and it is known that they can exhibit favorable behavior globally in some settings.

In many tasks such as ranking~\citep{bradley1952rank}, crowd-labeling~\citep{dawid1979maximum} and web search~\citep{chen2013pairwise}, however, measurements are taken as \emph{pairwise comparisons} between entities, which renders the covariates inherently \emph{discrete}. These designs or covariates are far from Gaussian, and it is natural to ask how standard nonconvex optimization algorithms now perform. Motivated by this question, we study how the celebrated expectation-maximization (EM) algorithm behaves when estimating a mixture of symmetric linear regressions under the pairwise comparison design.
More specifically, we consider the following model of a symmetric mixture of two linear regressions with parameter vectors $\theta^*, - \theta^* \in \R^d$.
For $r = 1, \dots, N$, suppose that we observe $(x_r, y_r) \in \R^d \times \R$ satisfying 
\begin{align}\label{eq:model_symmetric}
y_r = z_r \, x_r^\top \theta^* + \epsilon_r .
\end{align} 
Here the covariates $x_r$ follow the pairwise comparison design, i.e.,
$x_r = e_{i_r}-e_{j_r}$ for $i_r, j_r \in [d] := \{1,\dots,d\}$, where $e_i$ is the $i$th standard basis vector in $\R^d$;
%
$z_r \in \{-1, 1\}$ is a latent sign indicating whether the observation is from component $\theta^*$ or $-\theta^*$;
%
$\epsilon_r$ models additive noise, assumed to be i.i.d.\ Gaussian for theoretical results.
Our goal is to estimate the parameter vector $\theta^* \in \R^d$.

The model~\eqref{eq:model_symmetric} can also be seen as a special case of the \emph{Euclidean distance geometry problem}~\citep{gower1982euclidean,liberti2014euclidean} and multidimensional scaling~\citep{borg2005modern}, and bears some resemblance to real phase retrieval (see~\citet{eldar2014phase,chen2019gradient} and the references therein). 
Let $y'_r = |y_r|$ be the observation and $\eps'_r = z_r \eps_r$ denote noise.
Because the sign $z_r$ is unobserved,~\eqref{eq:model_symmetric} is equivalent to 
\begin{align} \label{eq:model_abs}
y'_r = |x_r^\top \theta^* + \eps'_r|. 
\end{align}
Since $x_r = e_{i_r} - e_{j_r}$, we observe $|\theta^*_{i_r} - \theta^*_{j_r} + \eps'_r|$ which is the distance between two parameters contaminated by noise\footnote{It is more natural to have the noise $\eps'_r$ inside the absolute value because the distance is nonnegative.}. 
Hence our goal is to reconstruct $\theta^*_1, \dots, \theta^*_d$ from pairwise distances.
This is a noisy version of the classical Euclidean distance geometry problem, also known as multidimensional scaling (although the latter is more often used for a class of visualization methods). 
The above model is also related to the problem of angular or phase synchronization \citep{singer2011angular,zhong2018near,gao2021exact}, where angles $\theta^*_1, \dots, \theta^*_d \in [0, 2 \pi)$ are estimated from noisy measurements of $\theta^*_i - \theta^*_j$ mod $2 \pi$.
Moreover, a heterogeneous version of angular synchronization \citep{cucuringu2020extension} aims to reconstruct multiple groups of angles from a mixture of pairwise differences between angles from unknown groups. For all the aforementioned observation models, researchers have considered convex, spectral, and nonconvex fitting approaches.

As mentioned previously, our focus is on the EM algorithm~\citep{dempster1977maximum}---and we discuss other estimation procedures briefly in Section~\ref{sec:discuss}---which has long been used for estimation in latent variable models. Asymptotic convergence guarantees for EM to local optima in general latent variable models are classical~\citep{wu1983convergence}.
For Gaussian covariates $x_r$ in \eqref{eq:model_symmetric}, nonasymptotic, local linear convergence of the EM algorithm for symmetric mixtures of linear regressions was established in~\cite{balakrishnan2017}. In the same setting, a form of ``global convergence"---where the EM algorithm is first initialized with a close relative called ``Easy-EM'' which is in itself randomly initialized (see the background section to follow)---was established in \cite{kwon2019global}. There is a large literature on EM and other iterative algorithms for mixtures of regressions and related models; see, e.g., \cite{daskalakis2017ten,xu2016global,li2018learning,klusowski2019estimating,wu2019randomly,dwivedi2020singularity,kwon2020converges,kwon2021minimax} and references therein. EM has also been analyzed in progressively more complex settings, most recently in Gaussian latent tree models~\citep{pmlr-v178-dagan22b}.
Our paper should be seen, in spirit, as extending this line of investigation.

\subsection{Contributions and organization}

In this paper, we apply the EM algorithm to model \eqref{eq:model_symmetric} and establish its local linear convergence around the ground truth $\theta^*$, proving that once the algorithm is initialized sufficiently close to the ground truth $\theta^*$, it converges linearly fast to a fixed point $\hat \theta$. Moreover, we provide optimal bounds on the $\ell_\infty$ and $\ell_2$ estimation errors achieved by the EM fixed point in estimating $\theta^*$. Despite the extensive study of the EM algorithm for mixture models, our results differ from existing works in several crucial ways.

First, we establish \emph{entrywise} guarantees on the EM algorithm---our results hold in the stronger $\ell_\infty$-norm, not just the $\ell_2$-norm. Second, and in contrast to several results in the Gaussian case for mixtures of linear regressions~\citep{kwon2019global,chandrasekher2021sharp}, we do not assume that each iterate of the EM algorithm takes in a fresh sample, i.e., our convergence guarantee is not based on sample-splitting\footnote{Some results of~\cite{balakrishnan2017} do not assume sample-splitting but are suboptimal in the low-noise regime.}. Third, our result for the $\ell_2$-norm is \emph{sharp} in that it also is optimal in terms of the constant factor, while most previous results for EM only achieve the optimal rate up to constant factors. Fourth, our simulation results show that EM's convergence from a random initialization is quite delicate: convergence does not occur in general and this stands in sharp contrast to the case with Gaussian covariates. On a technical level, our results are enabled by analyzing the finite-sample EM operator \emph{directly}, whereas many previous results proceed through the population update. En route to establishing our results, we analyze the $\ell_{\infty \to \infty}$ operator norm of the psedoinverse of the sample covariance in this problem, a result that may be of independent interest (see Section~\ref{sec:proof-tech}).

The rest of this paper is organized as follows. In Section~\ref{sec:background}, we formally introduce our assumptions and the EM iteration that we study. In Section~\ref{sec:main-results}, we state and discuss the main theorems proved in this paper.
In Section~\ref{sec:discuss}, we present a host of experiments showing that the conditions appearing in our theorem statements are indeed necessary in some respect. These experiments also confirm that global convergence of EM in this setting is a delicate phenomenon and does not appear to occur in general. Section~\ref{sec:discuss} also discusses other algorithms and the related literature, and discusses the (non-)identifiability of mixtures with more than two components. We conclude with a sketch of the proof techniques in Section~\ref{sec:proof-tech}. Full proofs can be found in Section~\ref{sec:analysis}.

\subsection{Notation} \label{sec:notation}
Let $d$ and $N$ be positive integers such that $3 \le d \le N$ throughout the paper. 
Define $[d] := \{1,\dots,d\}$ and $\binom{[d]}{2} := \{(i,j) : i,j \in [d], \, i<j\}$. 
Let $\bone$ denote the all-ones vector in $\R^d$, and let $\cH$ denote the orthogonal complement of $\bone$ in $\R^d$.
We use the standard asymptotic notation $O(\cdot)$ and $o(\cdot)$ as $N \to \infty$; we use $O_p(\cdot)$ and $o_p(\cdot)$ when the asymptotic relation holds between two random variables with high probability.
We also write $a_N \ll b_N$ if $a_N = o(b_N)$, $a_N \gg b_N$ if $b_N = o(a_N)$, and $a_N \lesssim b_N$ if $a_N = O(b_N)$.
For a matrix $A \in \R^{d \times d}$, we denote the trace of $A$ by $\tr(A)$.

\section{Background and problem formulation} \label{sec:background}
For the results to follow, we consider model \eqref{eq:model_symmetric} with the following assumptions.

\begin{assumption}
\label{def:main-assumptions}
In \eqref{eq:model_symmetric}, the covariates are $x_r = e_{i_r}-e_{j_r}$ where $(i_r, j_r)_{r=1}^N$ are i.i.d.\ and uniformly random in $\binom{[d]}{2} = \{(i,j) : i,j \in [d], \, i<j\}.$
The noise terms $(\epsilon_r)_{r=1}^N$ are i.i.d.\ $\cN(0,\sigma^2)$ and independent from the covariates.
Moreover, $\mathbf{1}^\top \theta^* = 0$, i.e., $\theta^* \in \cH$ where $\cH$ is the orthogonal complement of the all-ones vector $\bone$ in $\R^d$.
\end{assumption}
\noindent
The assumption on the covariates states that the comparisons are chosen uniformly at random and with replacement among all pairs of indices.
There is no loss of generality in assuming $\bone^\top \theta^* = 0$, because the model \eqref{eq:model_symmetric} is invariant under any constant shift of $\theta^*$: If $\theta^*$ is replaced by $\theta^* + c\,\mathbf{1}$ for a constant $c \in \R$, the response $y_r$ stays the same. 
%
%
%
%
%
Define the sample covariance matrix of the covariates to be
\begin{equation}
\hat \Sigma := \frac{d-1}{2N} \sum_{r = 1}^N x_r x_r^\top.
\label{eq:def-samp-cov}
\end{equation}
Note that $\bone$ is in the kernel of $\hat \Sigma$ and $\cH$ is an invariant subspace of $\hat \Sigma$.
Therefore, $\hat \Sigma$ can be viewed as a linear map on $\cH$.
Let $\hat \Sigma^\dag$ denote the pseudoinverse of $\hat \Sigma$; it is the true inverse of $\hat \Sigma$ if $\hat \Sigma$ is restricted to $\cH$ and is invertible.
Moreover, the normalization $\frac{d-1}{2N}$ is chosen so that $\E[\hat \Sigma] \in \R^{d \times d}$ has entries
$$
(\E[\hat \Sigma])_{ij} = 
\begin{cases}
(d-1)/d & \text{ if } i = j , \\
-1/d & \text{ if } i \ne j.
\end{cases}
$$
It is not hard to check that $\E[\hat \Sigma]$ is the identity map on $\cH$.

It is convenient to define the so-called ``Easy-EM" operator $\bar Q: \cH \to \cH$~\citep{kwon2019global} and write the EM operator $\hat Q: \cH \to \cH$ in terms of it:
\begin{subequations} \label{eq:qq}
\begin{align}
\bar Q(\theta) &:= \frac{d-1}{2N} \sum_{r=1}^N \tanh \Big( \frac{ y_r \, x_r^\top \theta }{\sigma^2} \Big) y_r x_r, \label{eq:qq-1} \\
\hat Q(\theta)  &:=  \hat \Sigma^\dag \bar Q(\theta).
\label{eq:qq-2}
\end{align}
\end{subequations}
See \citet{balakrishnan2017} for a derivation of the EM operator.
Starting from an initialization $\theta^{(0)}$, the EM iteration is then given by
\begin{equation}
\theta^{(t+1)} := \hat Q( \theta^{(t)} ) = \bigg(\sum_{r=1}^Nx_rx_r^\top\bigg)^\dag  \sum_{r=1}^N \tanh \Big( \frac{ y_r \, x_r^\top \theta^{(t)} }{\sigma^2} \Big) y_r x_r , \qquad t \ge 0.
\label{eq:iteration-em}
\end{equation}

\section{Main results} \label{sec:main-results}

It is helpful to define the following class of parameter vectors. For a constant $\beta > 0$, let
\begin{equation}
\Theta(\beta)
:= \bigg\{ \theta \in \cH : \theta_1 \le \cdots \le \theta_d , \, |\theta_i - \theta_j| \ge \beta \frac{|i-j|}{d} \text{ for any } (i,j) \in \binom{[d]}{2} \bigg\} .
\label{eq:simple-parameter-set}
\end{equation}
Operationally, the set $\Theta(\beta)$ is a natural family of parameters to consider. First, up to a relabeling of the coordinates, there is no loss of generality in restricting our attention to parameter vectors with nondecreasing entries. 
Second, each parameter vector in $\Theta(\beta)$ has its $i$th and $j$th entries separated by at least $\beta \frac{|i-j|}{d}$.
This separation condition is imposed to simplify the statement of our main results.
It also appeared in, e.g., \cite{chen2022optimal}, where the pairwise comparison design was studied.
Our full results in Section~\ref{sec:analysis} are more general, assuming a condition weaker than that in \eqref{eq:simple-parameter-set} (see \eqref{eq:cond-2-s-i-delta} in particular).
In short, for every $i \in [d]$, we require the set $\{j \in [d] : |\theta_i - \theta_j| \le c_1 \beta \}$ to contain at most $c_2 \, d$ elements for some constants $c_1, c_2 > 0$; in other words, $\theta_i$ should not have too many ``neighbors'' $\theta_j$.
This weaker condition is satisfied with high probability if $\theta^*_i$ are i.i.d.\ uniform random variables in $[-1,1]$, for example.

With this setup in hand, we are now ready to state our main results. Our first result shows that the EM sequence $\{\theta^{(t)}\}_{t \ge 0}$ converges linearly to a limit $\hat \theta$ around the ground truth $\theta^*$, and provides an estimation error guarantee in the $\ell_\infty$-norm with high probability.

\begin{theorem}
\label{thm:em-contraction}
Consider the model in \eqref{eq:model_symmetric} satisfying Assumption~\ref{def:main-assumptions}.
For any fixed constants $\rho, D > 0$, there exist constants $N_0, c_1, C_2 > 0$ depending only on $\rho$ and $D$ such that the following holds.
For $\beta > 0$, suppose $\theta^* \in \Theta(\beta)$ as defined in \eqref{eq:simple-parameter-set}.
Suppose $N \ge \max\{d^{1 + \rho}, N_0\}$ and $\sigma \le c_1 \beta$. 
Fix $\theta^{(0)} \in \cH$ such that $\|\theta^{(0)} - \theta^*\|_\infty \le c_1 \beta$.
Let $\{\theta^{(t)}\}_{t \ge 0}$ be the EM iterates defined in \eqref{eq:iteration-em}. 
Moreover, let 
$$
\tau := C_2 \sigma \sqrt{\frac{d}{N}\log N} , \qquad
T := \max\bigg\{ 0, \bigg\lceil \log_{4/3} \bigg( \frac{\|\theta^{(0)} - \theta^*\|_\infty}{4\tau} \bigg) \bigg\rceil \bigg\} . 
$$
Then it holds with probability at least $1-N^{-D}$ that 
\begin{itemize}
\item
there exists $\hat \theta \in \cH$ such that $\hat Q(\hat \theta) = \hat \theta$ and $\|\hat \theta - \theta^*\|_\infty \le 4 \tau$;

\item
the sequence $\{\theta^{(t)}\}_{t \ge 0}$ converges to $\hat \theta$;

\item
$\| \theta^{(T+t)} - \hat \theta \, \|_\infty \le 8 \tau / 2^t$ for all $t \ge 0$.
\end{itemize}
\end{theorem}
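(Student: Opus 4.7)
The plan is to establish, on a single high-probability event, two properties of the finite-sample EM operator: (a) at the truth, $\|\hat Q(\theta^*) - \theta^*\|_\infty \le \tau$; and (b) on the $\ell_\infty$-ball of radius $c_1\beta$ around $\theta^*$ intersected with $\cH$, $\hat Q$ is a $\tfrac{1}{2}$-Lipschitz map in $\ell_\infty$. Combined via the identity
\begin{equation*}
\hat Q(\theta) - \theta^* = \hat\Sigma^\dag\bigl(\bar Q(\theta) - \bar Q(\theta^*)\bigr) + \bigl(\hat\Sigma^\dag\bar Q(\theta^*) - \theta^*\bigr)
\end{equation*}
and the triangle inequality, these two facts yield $\alpha_{t+1} \le \tfrac{1}{2}\alpha_t + \tau$ for $\alpha_t := \|\theta^{(t)} - \theta^*\|_\infty$, which is a $\tfrac{3}{4}$-contraction whenever $\alpha_t > 4\tau$ and so gives the $\log_{4/3}$ phase-one iteration count $T$. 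Once $\alpha_t \le 4\tau$, the invariance of the $4\tau$-ball under $\hat Q$ (since $\tfrac{1}{2}\cdot 4\tau + \tau \le 4\tau$) together with the $\tfrac{1}{2}$-Lipschitz property lets the Banach fixed-point theorem produce $\hat\theta$ with $\hat Q(\hat\theta) = \hat\theta$ and $\|\hat\theta - \theta^*\|_\infty \le 4\tau$, and the iterates then satisfy $\|\theta^{(T+t)} - \hat\theta\|_\infty \le 2^{-t}\|\theta^{(T)} - \hat\theta\|_\infty \le 8\tau/2^t$.

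The main technical ingredient is an $\ell_{\infty \to \infty}$ operator-norm bound for the Laplacian-type pseudoinverse: $\|\hat\Sigma^\dag v\|_\infty \le C\|v\|_\infty$ for every $v \in \cH$, with probability at least $1 - N^{-D}$ under $N \ge d^{1+\rho}$. My plan is first to use matrix Bernstein to show that $\hat\Sigma$ concentrates around its expectation $I_\cH$ in spectral norm, and then to upgrade this to $\ell_\infty$, either through a Neumann-series expansion $\hat\Sigma^\dag = \sum_k (I_\cH - \hat\Sigma)^k$ combined with a leave-one-out argument exploiting the graph structure, or through an effective-resistance representation of the Laplacian pseudoinverse together with Bernstein tail bounds over the random edge set. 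This is where the $N \ge d^{1+\rho}$ sample lower bound enters, and I expect it to be the principal technical obstacle: since $\hat\Sigma$ is a sparse random Laplacian, $\hat\Sigma^\dag$ does not enjoy uniformly small entries, and controlling the worst-row $\ell_1$ norm of $\hat\Sigma^\dag$ requires more than routine matrix concentration.

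Given the pseudoinverse bound, the statistical error at $\theta^*$ is handled by substituting $y_r = z_r\Delta_r + \eps_r$ with $\Delta_r := x_r^\top\theta^*$, which produces the decomposition
\begin{equation*}
\bar Q(\theta^*) = \hat\Sigma\theta^* + \tfrac{d-1}{2N}\sum_r z_r\eps_r x_r + \tfrac{d-1}{2N}\sum_r \bigl[\tanh(y_r\Delta_r/\sigma^2) - z_r\bigr] y_r x_r .
\end{equation*}
On the event that $|\eps_r| \lesssim \sigma\sqrt{\log N}$ for all $r$, the separation $\theta^* \in \Theta(\beta)$ together with $\sigma \le c_1\beta$ forces $|y_r\Delta_r|/\sigma^2 \gtrsim \beta^2/\sigma^2$ for every pair except the $O(N/d)$ near-diagonal ones, so the last sum is exponentially small in $(\beta/\sigma)^2$. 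A coordinatewise Gaussian tail bound gives $\|\tfrac{d-1}{2N}\sum_r z_r\eps_r x_r\|_\infty \lesssim \sigma\sqrt{(d/N)\log N}$, and combining with the operator-norm bound produces $\|\hat\Sigma^\dag\bar Q(\theta^*) - \theta^*\|_\infty \le \tau$. For the one-step contraction, I write $\tanh(a_r) - \tanh(a_r^*) = \mathrm{sech}^2(\xi_r)(a_r - a_r^*)$ and use the same saturation fact: for the vast majority of pairs, $\mathrm{sech}^2(\xi_r)$ is exponentially small in $(\beta/\sigma)^2$, so the entrywise Lipschitz constant of $\bar Q$ is of order $(\sigma/\beta)$ times a small factor, and Step 1 pushes this to a Lipschitz constant for $\hat Q$ that is $\le 1/2$ once $c_1$ is small. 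A secondary subtlety, arising because sample splitting is avoided, is that this Lipschitz bound must hold uniformly over $\theta$ in the ball; this is obtained by replacing $\mathrm{sech}^2(\xi_r)$ with a pivot that depends only on $\theta^*$ using the bound $|\tanh(a) - \tanh(b)| \le 8\min\{e^{-|a|}, e^{-|b|}\}$ when $a$ and $b$ share a sign, together with a crude global estimate on the $O(N/d)$ near-diagonal pairs.
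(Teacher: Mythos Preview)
Your high-level architecture---establish $\|\hat Q(\theta^*)-\theta^*\|_\infty\le\tau$ and a $\tfrac12$-Lipschitz bound on a ball, then run the two-phase/Banach argument---matches the paper exactly. Two of your technical ingredients, however, diverge from the paper and one of them contains a genuine gap.

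\textbf{The pseudoinverse bound.} Your Neumann-series idea will not work as stated: in the sparse regime $N\ll d^2$, a fixed row of $I_\cH-\hat\Sigma$ has roughly $d-2N/d$ entries equal to $1/d$ (the pairs never sampled) and $2N/d$ entries of size $\asymp d/N$ (the sampled pairs), so $\|I_\cH-\hat\Sigma\|_\infty\approx 2$ and the series diverges in the $\ell_\infty\to\ell_\infty$ norm. A leave-one-out or effective-resistance route might be salvageable, but you have not sketched how; the paper instead uses the duality $\|\hat\Sigma^\dag\|_\infty=\bigl(\min_{u\in\cH:\|u\|_\infty=1}\|\hat\Sigma u\|_\infty\bigr)^{-1}$ and a multilevel decomposition of $\{u:\|u\|_\infty=1\}$ according to the sizes of the level sets $\{i:u_i\ge\kappa_t\}$, proving the lower bound $\|\hat\Sigma u\|_\infty\ge 1/(2L)$ on each piece by different combinatorial arguments. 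The resulting bound $\|\hat\Sigma^\dag\|_\infty\le 2L$ with $L\asymp 1/\rho$ is what forces the $d^{1+\rho}$ threshold.

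\textbf{The statistical error at $\theta^*$.} This is where your proposal has a real gap. Your decomposition is correct, but the claim that the correction term $\tfrac{d-1}{2N}\sum_r[\tanh(y_r\Delta_r/\sigma^2)-z_r]y_rx_r$ is ``exponentially small in $(\beta/\sigma)^2$ except for $O(N/d)$ near-diagonal pairs'' is false under $\theta^*\in\Theta(\beta)$: the separation only gives $|\Delta_r|\ge\beta|i-j|/d$, so for the $\Theta(d)$ indices $j$ with $|i-j|\lesssim (\sigma/\beta)d$ one has $|\Delta_r|\lesssim\sigma$ and $\tanh$ is \emph{not} saturated. A crude deterministic bound on those terms gives a per-coordinate contribution of order $\sigma^2/\beta$, which is $\gg\tau$. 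The paper avoids this entirely by \emph{not} splitting: it uses the exact identity $\E[y_r\tanh(y_r\Delta_r/\sigma^2)]=\Delta_r$ (so $\E[\hat Q(\theta^*)]=\theta^*$ with no approximation), and then shows directly that $\epsilon\mapsto w^\top\bar Q(\theta^*)$ is $O(\sigma\sqrt{d/N}\cdot(w^\top\hat\Sigma w)^{1/2})$-Lipschitz, hence sub-Gaussian, for each row $w$ of $\hat\Sigma^\dag$. Your second and third terms individually fluctuate more than their sum; treating $y_r\tanh(\cdot)$ as a single random variable captures the cancellation.

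\textbf{The contraction.} Your sketch is close to the paper's Lemma on $\|\bar Q(\theta)-\bar Q(\theta')\|_\infty$, but again the phrase ``exponentially small in $(\beta/\sigma)^2$'' is misleading. The paper's argument splits $\cR_i$ at a threshold $\Delta\asymp\max\{L\|\theta-\theta'\|_\infty,\sqrt{L}\sigma\}$: on the far piece it bounds $\tanh'$ by $4e^{-2|\cdot|}$ and uses $\max_a a^2e^{-ab}=4/(eb)^2$ to get a contribution $\lesssim(\sigma/\Delta)^2\|\theta-\theta'\|_\infty$; on the near piece it bounds crudely and uses $|S_i(\Delta)|\le 2\Delta d/\beta$ from $\Theta(\beta)$. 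The balance of these two pieces---polynomial, not exponential---is what produces the $\tfrac12$-contraction once $\sigma,\|\theta^{(0)}-\theta^*\|_\infty\le c_1\beta$.
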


Theorem~\ref{thm:em-contraction} is proved in Section~\ref{sec:pf-main-results} and is a result of Proposition~\ref{prop:em-contraction}.
We begin with a discussion of the conditions in the theorem.
First, the setting is nearly high-dimensional because we only require $N \ge d^{1+\rho}$ for an arbitrarily small $\rho > 0$.
We remark that it is possible to improve this condition to $N \ge d \cdot \operatorname{polylog}(d)$ by a more careful application of Proposition~\ref{prop:em-contraction} at the cost of complicating the conditions on the other parameters (see Section~\ref{sec:d-polylog-d} for details).
Focusing on the setting where $\beta$ is a constant for clarity, we have $|\theta^*_i - \theta^*_j| \asymp |i-j|/d$, and $\theta^*_i = O(1)$ for all $i, j \in [d]$. 
The constant-sized noise condition $\sigma \le c_1 \beta$ is mild, because it should be compared against the minimum signal strength $|\theta^*_{i} - \theta^*_{i+1}| \asymp 1/d$ (see Eq.~\eqref{eq:simple-parameter-set}).
Next, the initialization $\theta^{(0)}$ has to be close to the true parameter vector $\theta^*$ so that $\|\theta^{(0)} - \theta^*\|_\infty \le c_1 \beta$. This is the sense in which the convergence guarantee is local, and we conjecture that some such condition is necessary (see Section~\ref{sec:discuss} for experimental verification of this conjecture).

With these conditions assumed, we now explain the conclusions of Theorem~\ref{thm:em-contraction}.
It is useful to note that $\tau \asymp \sigma \sqrt{\frac{d}{N}\log N}$ is the optimal rate\footnote{In fact, this is the optimal rate even if we consider linear regression without a mixture, i.e., where all the signs $z_r$ are known a priori.} we expect to have when estimating each entry $\theta^*_i$ of a $d$-dimensional vector from $N$ samples with probability $1 - N^{-D}$.
It is easiest to interpret Theorem~\ref{thm:em-contraction} as a two-stage result for the EM iteration:
\begin{itemize}
\item
First, the quantity $T$ indicates that it takes at most a logarithmic number of steps for the EM iteration to get from distance $\|\theta^{(0)} - \theta^*\|_\infty$ to an $O(\tau)$-neighborhood around the ground truth $\theta^*$.
By the bounds $\|\hat \theta - \theta^*\|_\infty \le 4 \tau$ and $\|\theta^{(T)} - \hat \theta\|_\infty \le 8 \tau$ (by taking $t = 0$ in the last statement), we have $\|\theta^{(T)} - \theta^*\|_\infty = O(\tau)$.
Hence, $\theta^{(T)}$ already achieves the optimal rate of estimation in the $\ell_\infty$-norm.

\item
Second, beyond time $T$, the EM sequence $\{\theta^{(t)}\}_{t \ge 0}$ eventually converges to a limit $\hat \theta$, and the rate of convergence is linear.
In other words, the EM operator is locally contractive.
Moreover, the limit $\hat \theta$ lies in the $4 \tau$-neighborhood of $\theta^*$ in the $\ell_\infty$-norm.
\end{itemize}
Finally, we once again remark that the theorem does not require any resampling for the EM iteration.
The conclusions of the theorem hold simultaneously on a high-probability event.

Our second theorem shows that the limit $\hat \theta$ of the EM sequence achieves the sharp estimation error in the $\ell_2$-norm with high probability in the low-noise regime, in that the constant is also sharp.

\begin{theorem}\label{thm:asymptotics}
In the setting of Theorem~\ref{thm:em-contraction} (which in particular assumes $N \ge d^{1 + \rho}$ for fixed $\rho > 0$), we additionally assume $\sigma \ll \frac{\beta}{(\log N)^{2}}$ and $d \gg \log N$.
Then it holds with probability at least $1 - N^{-D}$ that
$$
\|\hat \theta - \theta^*\|_2^2 \le (1+o_p(1)) \, \sigma^2 \frac{d-1}{2N} \tr(\hat \Sigma^\dag) .
$$
\end{theorem}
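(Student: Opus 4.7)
The plan is to start from the fixed-point identity $\hat \Sigma \hat\theta = \bar Q(\hat\theta)$ (which holds on $\cH$, an invariant subspace of $\hat\Sigma$, since $\bar Q$ maps into $\cH$) and Taylor-expand around $\theta^*$. Writing $\Delta := \hat\theta - \theta^*$ and subtracting $\hat\Sigma \theta^*$ from both sides,
\[
\hat\Sigma \Delta \;=\; \bigl[\bar Q(\theta^*) - \hat\Sigma \theta^*\bigr] \;+\; \bigl[\bar Q(\hat\theta) - \bar Q(\theta^*)\bigr].
\]
Setting $a_r := x_r^\top \theta^*$ and using $y_r = z_r a_r + \eps_r$, a direct computation shows
\[
\tanh\!\bigl(y_r x_r^\top \theta^*/\sigma^2\bigr) y_r - a_r \;=\; z_r \eps_r \;+\; \bigl[\tanh\bigl(a_r(a_r + z_r\eps_r)/\sigma^2\bigr) - 1\bigr]\,(a_r + z_r\eps_r),
\]
so the natural candidate for the leading term in $\Delta$ is the Gaussian vector
\[
V \;:=\; \hat\Sigma^\dag \cdot \frac{d-1}{2N} \sum_{r=1}^N z_r \eps_r \, x_r.
\]
Using $\hat\Sigma^\dag \hat\Sigma \hat\Sigma^\dag = \hat\Sigma^\dag$ on $\cH$, the conditional second moment $\E[\|V\|_2^2 \mid \{x_r,z_r\}] = \sigma^2 \tfrac{d-1}{2N}\tr(\hat\Sigma^\dag)$ matches the claimed rate exactly, which suggests aiming for $\Delta = V + (\text{negligible})$ in $\ell_2$.

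Next I would quantify the two error sources. The residual in the score decomposition is driven by $|\tanh(s)-1|$, which is $O(e^{-2s})$ for $s \gg 1$. I would bucket samples according to $|a_r|/\sigma$: those with $|a_r| \gg \sigma \log N$ contribute negligibly via the exponential tail, while the ``bad'' samples with $|a_r|$ smaller are few in number thanks to the separation structure of $\Theta(\beta)$, and the crude bound $|\tanh(s)-1|\le 2$ combined with the $\ell_\infty \to \ell_\infty$ operator-norm control on $\hat\Sigma^\dag$ from Section~\ref{sec:proof-tech} renders their aggregate contribution lower-order. For the second bracket, I would Taylor-expand $\bar Q(\hat\theta) - \bar Q(\theta^*) = M \Delta + R(\Delta)$ with Jacobian
\[
M \;:=\; \frac{d-1}{2N} \sum_{r=1}^N \mathrm{sech}^2\!\bigl(y_r x_r^\top \theta^*/\sigma^2\bigr) \, \frac{y_r^2}{\sigma^2} \, x_r x_r^\top.
\]
In the low-noise regime $\mathrm{sech}^2(\cdot)$ is exponentially small for typical $r$, making $\hat\Sigma^\dag M \Delta$ negligible, and the second-order remainder $R(\Delta)$ is controlled through the entrywise bound $\|\Delta\|_\infty \le 4\tau$ from Theorem~\ref{thm:em-contraction} combined with the Lipschitz behavior of $\mathrm{sech}^2$. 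The strengthened hypothesis $\sigma \ll \beta/(\log N)^2$ is precisely what makes these exponential factors dominate the polynomial counts of ``bad'' samples.

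Combining the above, I would obtain $\Delta = V + W$ with $\|W\|_2 = o_p(\sigma \sqrt{d/N})$, whence by Cauchy--Schwarz
\[
\|\Delta\|_2^2 \;\le\; \|V\|_2^2 + 2\|V\|_2 \|W\|_2 + \|W\|_2^2 \;=\; (1+o_p(1))\,\|V\|_2^2.
\]
The final step is to concentrate $\|V\|_2^2$ around its conditional mean. Conditional on $\{x_r, z_r\}$, $\|V\|_2^2$ is a Gaussian quadratic form in $(z_r \eps_r)_r$ whose matrix $B$ satisfies $\tr(B) = \tfrac{d-1}{2N}\tr(\hat\Sigma^\dag)$ and $\|B\|_F = \tfrac{d-1}{2N}\|\hat\Sigma^\dag\|_F$. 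Applying the Hanson--Wright inequality gives relative deviation $\lesssim \|\hat\Sigma^\dag\|_F \sqrt{\log N}/\tr(\hat\Sigma^\dag) + \|\hat\Sigma^\dag\|_{\mathrm{op}} \log N/\tr(\hat\Sigma^\dag)$; the spectral estimates on $\hat\Sigma^\dag$ that underlie the paper's earlier analysis (giving $\tr(\hat\Sigma^\dag) \asymp d$ and $\|\hat\Sigma^\dag\|_{\mathrm{op}} = O(1)$ under $N \ge d^{1+\rho}$), together with $d \gg \log N$, drive this bound to $o_p(1)$.

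The main obstacle will be the bookkeeping in the second step: the ``bad'' samples with small $|a_r|/\sigma$ are rare but cannot be discarded, so one must carefully trade their count against the crude $O(1)$ bound on $|\tanh(s)-1|$ and route the resulting error through $\hat\Sigma^\dag$ in $\ell_2$ without sacrificing the sharp leading constant. This is exactly where the stronger condition $\sigma \ll \beta/(\log N)^2$---as opposed to the looser $\sigma \lesssim \beta$ from Theorem~\ref{thm:em-contraction}---is expected to be essential.
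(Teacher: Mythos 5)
Your decomposition $\hat Q(\theta^*) - \theta^* = V + W$, with $V$ a pure Gaussian and $W$ the $\tanh$-residual, is a genuinely different route from the paper's, but as sketched it has gaps. The paper never splits off the Gaussian part: it treats $\hat Q(\theta^*) - \theta^*$ as a whole, and the key tool is the scalar inequality $\Var\bigl(X\tanh(\mu X/\sigma^2)\bigr) \le \sigma^2$ for $X \sim \cN(\mu,\sigma^2)$ (Lemma~\ref{lem:gauss-tanh}, Eq.~\eqref{eq:gauss-tanh-var}), which holds for \emph{every} $\mu$---no bucketing by $|a_r|/\sigma$ and no relation between $\sigma$ and $\beta$ is needed for the leading term. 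That single inequality immediately gives the conditional moment bound $\E[\|\hat Q(\theta^*) - \theta^*\|_2^2 \mid x] \le \sigma^2\frac{d-1}{2N}\tr(\hat\Sigma^\dag)$ (Lemma~\ref{lem:l2-err-exp}); concentration is then obtained by showing $\eps\mapsto\hat Q(\theta^*) - \theta^*$ is $O(\sigma\sqrt{d/N})$-Lipschitz and applying Lemmas~\ref{lem:wainwright} and~\ref{lem:conv_conc} (Lemma~\ref{lem:theta_ell2_bd}). The stronger noise condition $\sigma\ll\beta/(\log N)^2$ is used only for the Taylor remainders (Lemmas~\ref{lem:A-infty-norm-bd} and~\ref{lem:taylor_error}). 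Your Hanson--Wright concentration of $\|V\|_2^2$ is a valid alternative for that last step, but it applies only to $V$, not to the quantity you must actually control.

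The gap is in your bound on $W$. First, the stated target $\|W\|_2 = o_p(\sigma\sqrt{d/N})$ is off by $\sqrt d$: since $\tr(\hat\Sigma^\dag) \asymp d$ (Proposition~\ref{prop:op_norm}, Lemma~\ref{lem:trace-inv-lower}), one has $\|V\|_2 \asymp \sigma d/\sqrt{N}$, so the correct (and weaker) requirement is $\|W\|_2 = o_p(\sigma d/\sqrt{N})$. Second, and more seriously, the mechanism you propose---the pointwise bound $|\tanh(s)-1|\le 2$ on the bad samples, routed through $\|\hat\Sigma^\dag\|_\infty$---does not yield even the corrected target. Roughly $N\sigma\sqrt{\log N}/(\beta d)$ bad samples land in each $\cR_i$, each contributing $|W_r|$ as large as $\Theta(\sigma\sqrt{\log N})$, so the coordinatewise bound is $\|\tilde W\|_\infty \lesssim \sigma^2\log N/\beta$ (writing $\tilde W$ for the inner sum before $\hat\Sigma^\dag$), hence $\|W\|_2 \lesssim \sqrt{d}\,\sigma^2\log N/\beta$. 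The ratio to $\|V\|_2 \asymp \sigma d/\sqrt{N}$ is $\gtrsim (\sigma/\beta)\,(\log N)\,\sqrt{N/d}$, which under $N\ge d^{1+\rho}$ grows like $d^{\rho/2}$ and is \emph{not} controlled by $\sigma\ll\beta/(\log N)^2$ alone. To salvage the decomposition you would have to use that each $W_r = [\tanh(a_r y_r/\sigma^2)-1]\,y_r$ is conditionally mean-zero given $x_r$ (a consequence of Eq.~\eqref{eq:gauss-tanh-expect}) and that $\E[W_r^2]$ decays Gaussianly in $|a_r|/\sigma$, then treat $\sum_{r\in\cR_i}W_r$ as a centered independent sum; that gives $\E\|\tilde W\|_2^2 \lesssim (\sigma/\beta)\,\|V\|_2^2$, which is enough in expectation, but you would still need to establish concentration of $\|\tilde W\|_2^2$ to the $1-N^{-D}$ level. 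The paper's scalar variance inequality encodes exactly this cancellation implicitly and sidesteps the entire issue.
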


The proof of Theorem~\ref{thm:asymptotics} is deferred to Section~\ref{sec:pf-main-results}.
Compared to the noise condition $\sigma \le c_1 \beta$ in Theorem~\ref{thm:em-contraction}, here we need $\sigma$ to be smaller by a polylogarithmic factor, and an additional, mild condition on the dimension.
While we assume a random design of $x_r$ and thus the right-hand side of the above equation is random, the result can be understood as follows.
Conditional on a typical realization of the covariates $(x_r)_{r=1}^N$, the squared error $\|\hat \theta - \theta^*\|_2^2$ is within a $(1+o(1))$ factor of
\begin{equation}
\sigma^2 \frac{d-1}{2N} \tr(\hat \Sigma^\dag) = \sigma^2 \tr\bigg( \Big( \sum_{r=1}^N x_r x_r^\top \Big)^\dag \bigg) 
\label{eq:sharp-rate}
\end{equation}
with high probability over the randomness of the noise $(\eps_r)_{r=1}^N$.
Note that \eqref{eq:sharp-rate} is the optimal rate even for vanilla linear regression with a fixed design (see Section~\ref{sec:lin-reg-rate} for more discussion). 
As a result, we obtain the optimal $\ell_2$ error (including the sharp constant) for the EM algorithm provided that the initialization is sufficiently informative and the noise level is sufficiently small.

\section{Experiments and discussion}
\label{sec:discuss}
In this section, we further discuss our model and results, related methods, and possible extensions.
Our discussion is accompanied by numerical experiments.


\subsection{Failure of random initialization}
Theorem~\ref{thm:em-contraction} requires that the initialization of the EM algorithm satisfies $\|\theta^{(0)} - \theta^*\|_\infty \le c \beta$ for a constant $c>0$.
We believe that this is not an artifact of the analysis and that some such condition is necessary. 
To test this hypothesis, let us numerically test if a ``random'' initialization suffices for the convergence of EM.


For simplicity, we take $\theta^*_i = \frac{i}{d} - \frac{d+1}{2 d}$ for $i \in [d]$ so that $|\theta^*_i - \theta^*_j| = \frac{|i-j|}{d}$.
Then $\theta^* \in \Theta(\beta)$ with $\beta = 1$.
Let $\theta \in \R^d$ be a random vector whose entries are i.i.d.\ uniform in $[-0.5, 0.5]$; then define $\theta^R = \theta - a \bone$ where the scalar $a \in \R$ is chosen so that $\bone^\top \theta^R = 0$.
The vector $\theta^R$ is a canonical random initialization, because the entries of both $\theta^R$ and $\theta^*$ are approximately in the range $[-0.5, 0.5]$, yet $\theta^R$ is random.
For $\eta \in [0,1]$, define
$
\theta^{(0)} = \theta^{(0)} (\eta) = (1-\eta) \theta^* + \eta \, \theta^R ,
$
which interpolates between the ground truth $\theta^*$ and the random vector $\theta^R$.
We use $\theta^{(0)}$ as the initialization of the EM algorithm and study its performance.
Note that 
$$
\|\theta^{(0)} - \theta^*\|_\infty = \eta \, \|\theta^R - \theta^*\|_\infty 
$$
which is approximately $\eta$.
Our theory predicts that the EM algorithm performs well when $\eta$ is a small constant.
On the other hand, if a random initialization is not sufficient, then the EM algorithm will have poor performance when $\eta$ is close to $1$.
We indeed observe this phenomenon in the left plot of Figure~\ref{fig:initialization}.

\begin{figure}[ht!]
\centering
\includegraphics[clip, trim=1.6cm 6.4cm 2.4cm 7.4cm, width=0.45\textwidth]{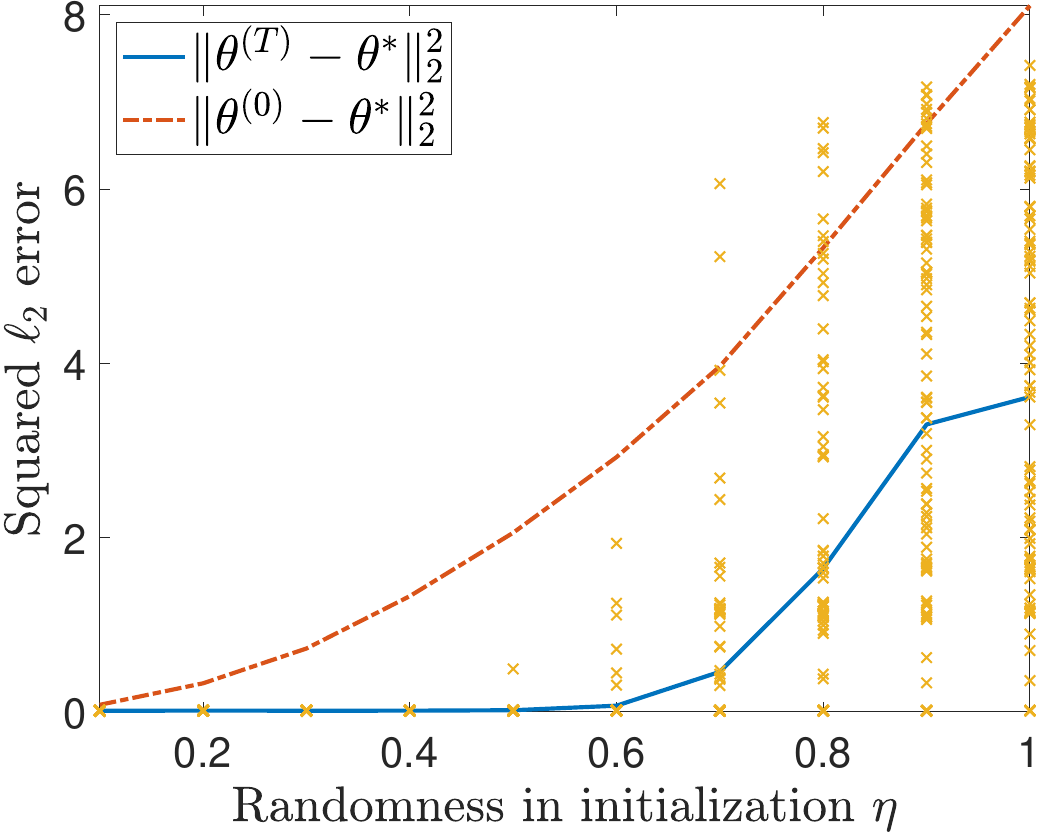}
\hspace{0.8cm}
\includegraphics[clip, trim=1.6cm 6.4cm 2.4cm 7.4cm, width=0.45\textwidth]{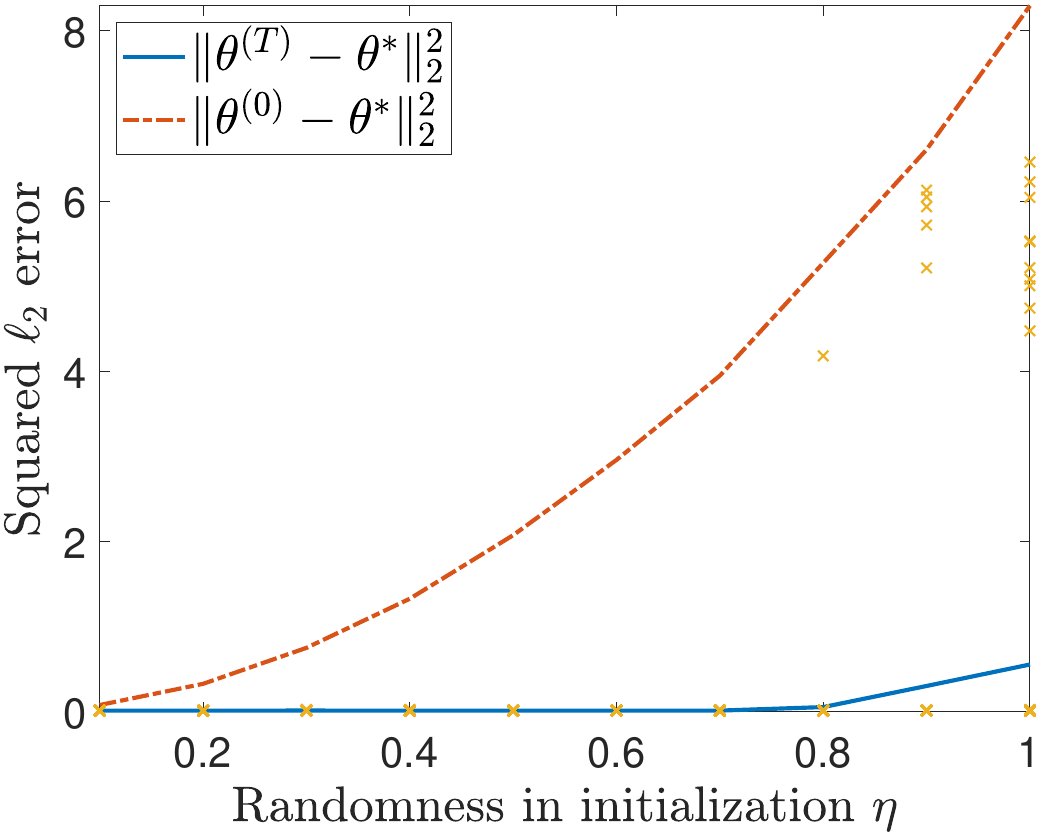}
%
%
\caption{\emph{Left:} Pairwise-comparison design. \emph{Right:} Gaussian design. We plot the initial squared error $\|\theta^{(0)} - \theta^*\|_2^2$ (dashed red line) and the eventual squared error $\|\theta^{(T)} - \theta^*\|_2^2$ (solid blue line) of the EM algorithm against a varying randomness parameter $\eta$ in the initialization. 
The experiment for each value of $\eta$ is averaged over $100$ repetitions, and the error $\|\theta^{(T)} - \theta^*\|_2^2$ for each repetition is indicated by a yellow cross.
}
\label{fig:initialization}
\end{figure}

To be more precise, we set $d = 50$, $N = 1000$, and $\sigma = 0.1$.
For $\eta \in \{0.1, 0.2, \dots, 1\}$, we start from $\theta^{(0)} = \theta^{(0)}(\eta)$ and run the EM algorithm for $T = 100$ steps\footnote{The EM algorithm typically converges within a few steps, especially when the initialization is close to the ground truth, but we make the conservative choice $T = 100$ to ensure convergence.}.
We plot the initial squared error $\|\theta^{(0)} - \theta^*\|_2^2$ (dashed red line) and the eventual squared error $\|\theta^{(T)} - \theta^*\|_2^2$ (solid blue line)\footnote{Since the model \eqref{eq:model_symmetric} is invariant if $\theta^*$ is replaced by $-\theta^*$, convergence to a neighborhood of $-\theta^*$ should also be considered as a success. Hence, we actually compute the error $\min \{\|\theta^{(T)} - \theta^*\|_2^2, \|\theta^{(T)} + \theta^*\|_2^2\}$ in all our experiments. For brevity, we do no mention this elsewhere.}, averaged over $100$ independent repetitions for each value of $\eta$.
The error $\|\theta^{(T)} - \theta^*\|_2^2$ for each repetition is indicated by a yellow cross.
As shown by the left plot of Figure~\ref{fig:initialization}, an initialization with $\eta  \le 0.5$ leads to an extremely small error $\|\theta^{(T)} - \theta^*\|_2^2$ with high probability, while for an initialization with a larger $\eta$, the EM algorithm fails to achieve a small error with constant probability.

The failure of random initialization in the setting of the pairwise comparison design stands in sharp contrast to its behavior for Gaussian designs, 
as confirmed by the right plot of Figure~\ref{fig:initialization}.
The setup and parameters of this experiment are exactly the same as before, except that we now take i.i.d.\ covariates $x_r \sim \cN(0, \frac{2}{d} I)$, where the normalization $\frac 2d$ is chosen to match the scaling of $x_r$ in Assumption~\ref{def:main-assumptions}.
As we see in the figure, the EM algorithm achieves a small error $\|\theta^{(T)} - \theta^*\|_2^2$ with high probability\footnote{The occasional failures of random initialization with $\eta = 1$ can be explained by a few factors, such as our moderate choices of $d$ and $N$, the nontrivial error probability, etc.} even if $\eta$ is close to $1$. 
We remark that there is a sizable literature on the success of iterative algorithms with random initialization for a variety of problems, such as phase retrieval \citep{chen2019gradient}, Gaussian mixtures \citep{dwivedi2020singularity,wu2019randomly}, mixtures of log-concave distributions \citep{qian2019global}, and general regression models with Gaussian covariates \citep{chandrasekher2021sharp}.
However, Gaussianity or continuous density is typically part of the assumption, and analyzing iterative algorithms beyond the Gaussian setting appears to be a generally more challenging problem.
See also \cite{dudeja2022universality,wang2022universality} for recent work in this direction in the framework of approximate message passing.

An interesting open problem is to offer an explanation for the contrasting behavior of the randomly initialized EM algorithm given the two types of covariates.
We speculate that random initialization tends to fail if the covariates are discrete and sparse.
Note that our covariates $x_r$ are supported in the highly structured set $\{e_i - e_j : (i,j) \in \binom{[d]}{2}\}$, unlike the Gaussian covariates which are in general positions in $\mathbb{R}^d$.
However, we do not have a theoretical explanation for this phenomenon and leave the question to future work.

\subsection{Spectral method, initialization, and comparison}
Spectral methods form another popular class of algorithms for tackling mixture models and the Euclidean distance geometry problem.
One canonical spectral method for localizing points given their pairwise distances is the classical multidimensional scaling \citep{borg2005modern}.
It takes the following form for model \eqref{eq:model_symmetric}.
Define a matrix $D \in \R^{d \times d}$ that stores noisy pairwise distances by
$$
D_{ij} = D_{ji} = \frac{d(d-1)}{2N} \sum_{r=1}^N (y_r^2 - \sigma^2) \, \bbone\{ x_r = e_i - e_j \} .
$$
Since 
$\E[y_r^2 - \sigma^2 \mid x_r = e_i - e_j]
= \E[ (\theta^*_i - \theta^*_j)^2 + 2 \eps_r z_r (\theta^*_i - \theta^*_j) + \eps_r^2 - \sigma^2 \mid x_r ]
= (\theta^*_i - \theta^*_j)^2 ,$
we see that
$$
\E[D_{ij}] = (\theta^*_i - \theta^*_j)^2 .
$$
In other words, $\E[D]$ stores the pairwise distances between the entries of $\theta^*$.
Let $J = I - \frac{1}{d} \bone \bone^\top$ where $I$ is the identity matrix in $\R^{d \times d}$ and $\bone$ is the all-ones vector in $\R^d$.
Using that $\E[D]$ is the pairwise distance matrix, it is not hard to verify that
$$
- \frac 12 J \E[D] J = \theta^* (\theta^*)^\top ,
$$
which is the Gram matrix of $\theta^*$.
Therefore, if $(\lambda_1, v_1)$ is the leading eigenpair of the matrix $- \frac 12 J D J$, then we can use $\tilde \theta = \sqrt{\lambda_1} \, v_1$ as an estimator of $\theta^*$.

Using standard spectral perturbation theory and random matrix theory, it is straightforward to analyze the error $\tilde \theta - \theta^*$ in either the $\ell_2$ or $\ell_\infty$ norm (see, e.g., \cite{chen2021spectral}).
We do not provide a theoretical analysis of $\tilde \theta$ in this work, but let us discuss it qualitatively.
There are two sources of error in the spectral estimator $\tilde \theta$: the noise $\eps_r$, which is inevitable, and the incompleteness of observations.
In the regime $N < \binom{d}{2}$, if we never have $x_r = e_i - e_j$ for a pair of indices $(i,j)$, then $D_{ij} = 0$ while $\E[D_{ij}] = (\theta^*_i - \theta^*_j)^2$.
As a result, the spectral estimator $\tilde \theta$ does not recover $\theta^*$ even if $\sigma \to 0$.
On the other hand, the limit of the EM algorithm $\hat \theta$ recovers $\theta^*$ as shown by Theorem~\ref{thm:asymptotics} in this regime.
Nevertheless, this does not mean that the spectral estimator is useless, because it can be used as an initialization of the EM algorithm.
The experiment exhibited in the left plot of Figure~\ref{fig:spectral} confirms these observations.

\begin{figure}[ht]
\centering
\includegraphics[clip, trim=0.7cm 6.4cm 1.8cm 7.3cm, width=0.45\textwidth]{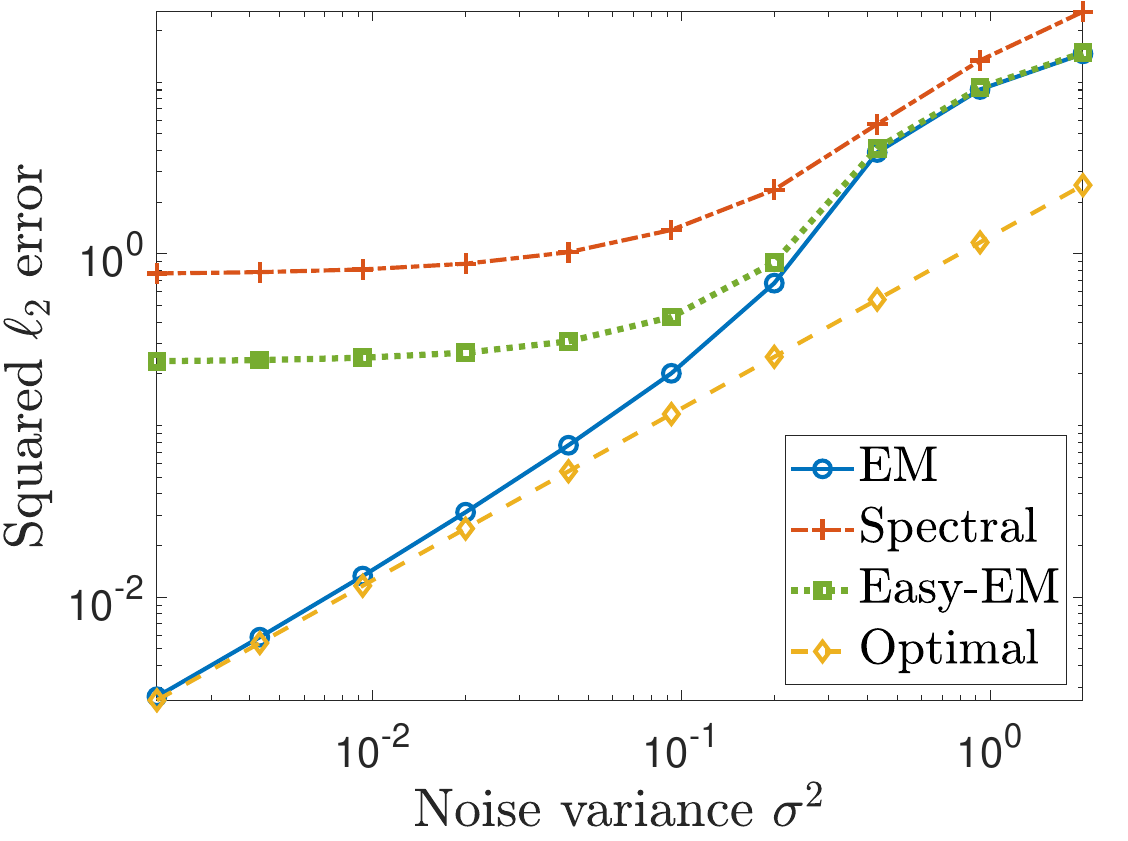}
\hspace{0.8cm}
\includegraphics[clip, trim=0.7cm 6.4cm 1.8cm 7.3cm, width=0.45\textwidth]{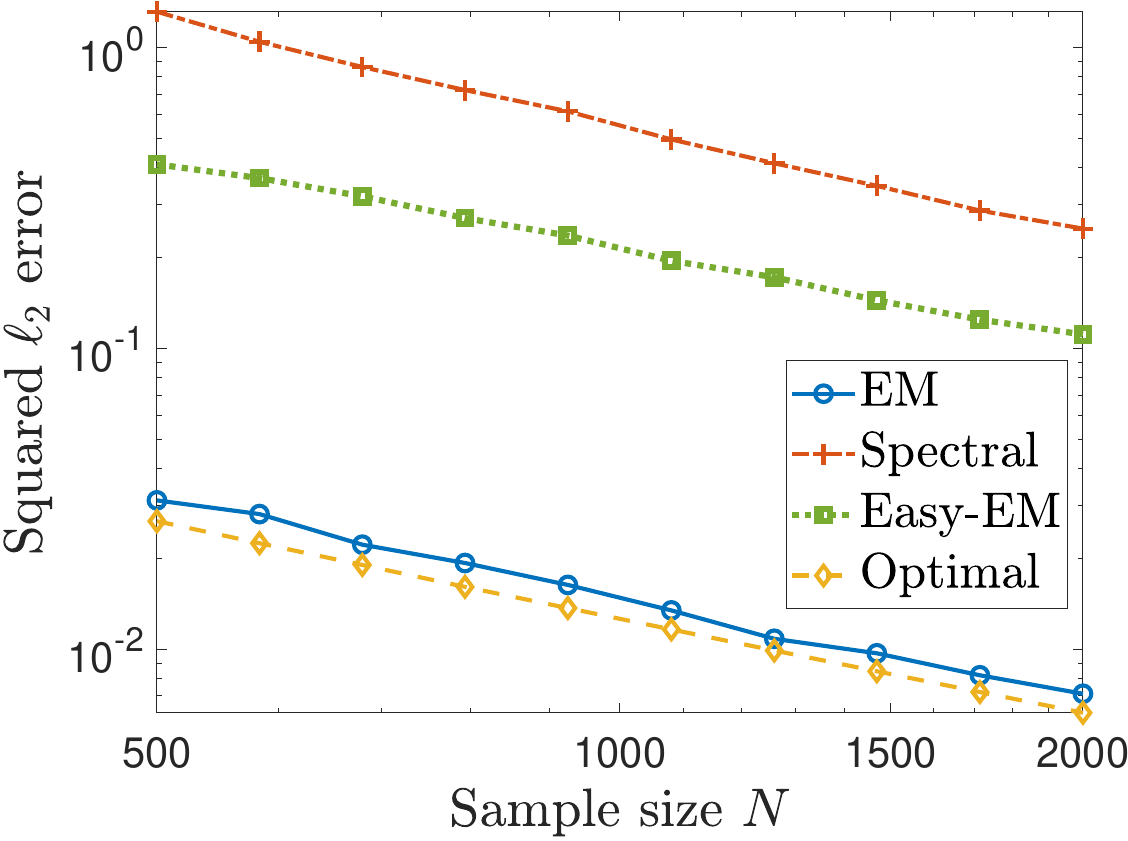}
\caption{\emph{Left:} Varying noise standard deviation.  \emph{Right:} Varying sample size. The log-log plots show the performance of different algorithms against the varying parameters. The squared error $\|\tilde \theta - \theta^*\|_2^2$ for the spectral estimator is shown in dash-dotted red lines. The EM and Easy-EM algorithms are initialized with $\theta^{(0)} = \tilde \theta$ and run for $T = 20$ steps. Their eventual squared errors $\|\theta^{(T)} - \theta^*\|_2^2$ are shown in solid blue lines and dotted green lines respectively. The experiment for each value of $\sigma^2$ or $N$ is averaged over $100$ independent repetitions. The theoretically predicted optimal rate is shown in dashed yellow lines.}
\label{fig:spectral}
\end{figure}

In the left plot of Figure~\ref{fig:spectral}, we set $d = 50$, $N = 1000$, and let the noise variance $\sigma^2$ vary from $0.002$ to $2$.
Let us first focus on the performance of estimators in the small-noise regime.
When $\sigma \to 0$, the squared error $\|\tilde \theta - \theta^*\|_2^2$ of the spectral method (dash-dotted red line) does not converge to zero.
However, we can set $\theta^{(0)} = \tilde \theta$ and run the EM algorithm from this initialization for $T = 20$ steps.
The squared error $\|\theta^{(T)} - \theta^*\|_2^2$ (solid blue line) goes to zero as $\sigma \to 0$.
Recall that Theorem~\ref{thm:asymptotics} predicts a sharp optimal rate \eqref{eq:sharp-rate} for the EM algorithm in the small-noise regime.
We also plot this optimal error (dashed yellow line) and observe that it is very close to the actual error $\|\theta^{(T)} - \theta^*\|_2^2$ as $\sigma \to 0$.
In addition, we consider the Easy-EM iteration with $\theta^{(t+1)} = \bar Q(\theta^{(t)})$ where $\bar Q$ is defined in \eqref{eq:qq-1}.
Compared to the EM iteration, Easy-EM does not have the multiplication by $\hat \Sigma^\dag$ at each step, which turns out to be crucial: We observe that the Easy-EM algorithm (dotted green line) fails to achieve a small error as $\sigma \to 0$.

Next, we consider the regime of large noise in the left plot of Figure~\ref{fig:spectral}.
The error of the EM algorithm no longer follows the sharp rate predicted by Theorem~\ref{thm:asymptotics}, which suggests that some condition on the noise may be necessary for this theorem (although the condition $\sigma \ll \frac{\beta}{(\log N)^2}$ we impose may not be optimal).
Nevertheless, the EM and Easy-EM algorithms still improve upon the spectral initialization, and all the three algorithms appear to differ only by a constant factor from the optimal rate as $\sigma^2$ grows.

To further investigate the rate of estimation achieved by these algorithms, we consider another experiment shown in the right plot of Figure~\ref{fig:spectral}.
We take $d = 50$, $\sigma = 0.1$, and let the sample size $N$ vary from $500$ to $2000$.
We again run the EM and the Easy-EM algorithm from the spectral initialization $\theta^{(0)} = \tilde \theta$ for $T = 20$ steps, and plot the squared errors.
Given that the noise is small, the optimal rate from Theorem~\ref{thm:asymptotics} accurately predicts the actual error of the EM algorithm.
The spectral estimator and the Easy-EM algorithm clearly have worse performance, but their rates of estimation appear to be optimal as $N$ grows.

\subsection{Extensions of the model}
There are several directions for extending our model.
For example, one may consider a mixture of $k$ linear regressions 
$$
y_r = x_r^\top \theta^{[\ell_r]} + \epsilon_r,
$$
where we have $\theta^{[1]},\dots, \theta^{[k]} \in \R^d$ and $(\ell_r)_{r=1}^N$ are i.i.d.\ from the distribution $\sum_{\ell=1}^k w_\ell \delta_\ell$ with weights satisfying $w_\ell \ge 0$ and $\sum_{\ell=1}^k w_\ell = 1$.
When the covariates $x_r$ are from a discrete distribution, even the identifiability of the mixture model is a nontrivial open problem.



To see the difficulty of the problem, let us consider a negative example.
Suppose that we have a mixture of $k=3$ equally weighted linear regressions, and the covariates are pairwise comparisons $x_r = e_{i_r} - e_{j_r}$ for $i_r, j_r \in [d]$.
For simplicity, suppose that the noise $\eps_r$ is zero.
Let
$$
\theta^{[1]} =
\begin{bmatrix}
1 \\ 2 \\ \theta_3 \\ \vdots \\ \theta_d
\end{bmatrix}
, \quad
\theta^{[2]} =
\begin{bmatrix}
3 \\ 3 \\ \theta_3 \\ \vdots \\ \theta_d
\end{bmatrix}
, \quad
\theta^{[3]} =
\begin{bmatrix}
2 \\ 4 \\ \theta_3 \\ \vdots \\ \theta_d
\end{bmatrix}
, \quad
\tilde{\theta}^{[1]} =
\begin{bmatrix}
2 \\ 2 \\ \theta_3 \\ \vdots \\ \theta_d
\end{bmatrix}
, \quad
\tilde{\theta}^{[2]} =
\begin{bmatrix}
1 \\ 3 \\ \theta_3 \\ \vdots \\ \theta_d
\end{bmatrix}
, \quad
\tilde{\theta}^{[3]} =
\begin{bmatrix}
3 \\ 4 \\ \theta_3 \\ \vdots \\ \theta_d
\end{bmatrix}
,
$$
where the unspecified entries $\theta_3, \dots, \theta_d$ can take any values.
It is not hard to see that, even if we observe the sets 
$$
\Big\{ \theta^{[\ell]}_i - \theta^{[\ell]}_j : \ell = 1,2,3 \Big\} \quad \text{ for all } (i, j) \in \binom{[d]}{2}
$$
with no noise, the two mixtures $\{ \theta^{[\ell]} : \ell = 1,2,3 \}$ and $\{ \tilde \theta^{[\ell]} : \ell = 1,2,3 \}$ yield the same sets of observations.
As a consequence, the mixture model with 3 components is not identifiable with the pairwise comparison design.

Therefore, to have an identifiable mixture model with multiple components, it is necessary to assume a richer design of covariates $x_r$.
Instead of pairwise comparisons, one may consider comparisons between multiple objects as in the Plackett--Luce model \citep{luce1959individual,plackett1975analysis} or groups of pairwise comparisons used for mixtures of permutations \citep{mao2022learning}.
We leave this interesting topic to future research.

Beyond a mixture of linear regressions with additive noise, one may consider a mixture of generalized linear regressions (see, e.g., \cite{heumann2008finite,sun2014learning,sedghi2016provable}), and in particular, the case of binary response is of practical interest in classification tasks.
Similar to the linear setting, existing theories for mixtures of generalized linear models are often based on a Gaussian or continuous design.
In the discrete setting, there have been a series of recent works on mixtures of the Placket--Luce or multinomial logit models motivated by ranking tasks (see, e.g., \cite{zhao2016learning,mollica2017bayesian,chierichetti2018learning,liu2019learning,zhao2022learning}).
In particular,~\citet{zhang2022identifiability} show that a mixture of two Bradley--Terry--Luce models (i.e., logistic regressions with the pairwise comparison design) are identifiable except when the parameters are in a set of measure zero.
Very recently,~\citet{nguyen2023efficient} provide a spectral initialization and an accurate implementation of the EM algorithm for learning a mixture of Plackett--Luce models. It would be interesting to analyze the convergence of iterative algorithms such as the EM algorithm for these models.

Along another direction, one may extend the model \eqref{eq:model_abs} to the noisy Euclidean distance geometry problem in dimension $m > 1$.
Namely, we aim to estimate vectors $\theta^*_1, \dots, \theta^*_d \in \R^{m}$ based on incomplete, noisy observations of pairwise distances $\| \theta^*_i - \theta^*_j \|_2$.
While this problem and its variants have long been studied (see the survey by \citet{liberti2014euclidean}), progress in understanding the sample complexity has been made only in recent years using tools of matrix completion \citep{lai2017solving,tasissa2018exact}, and the theory is even less complete in the noisy setting \citep{drusvyatskiy2017noisy,sremac2019noisy}.
Compared to semidefinite programs used in many of these works, iterative algorithms are faster to run, but remain to be understood for this problem.

\section{Proof techniques}
\label{sec:proof-tech}
We now informally discuss the strategies and key ingredients in the proofs of our main results; full proofs are provided in Section~\ref{sec:analysis}.

\subsection{Contraction in the $\ell_\infty$ norm}
\label{sec:contraction-discussion}
The first main result of this work is the linear convergence of the EM iteration in the $\ell_\infty$-norm in Theorem~\ref{thm:em-contraction}.
To shed light on this result, let us first consider the zero-noise limit of the EM algorithm for simplicity.
Suppose that the noise $\eps_r$ is zero in \eqref{eq:model_symmetric}.
Letting $\sigma \to 0$ in \eqref{eq:iteration-em}, we see that the EM iteration becomes
\begin{equation}
\theta^{(t+1)} = \bigg(\sum_{r=1}^Nx_rx_r^\top\bigg)^\dag \sum_{r=1}^N \sign(y_r \, x_r^\top \theta^{(t)}) \, y_r x_r
\label{eq:iteration-am}
\end{equation}
for $t \ge 0$, where $\sign(a) = 1$ if $a>0$, $\sign(a) = 0$ if $a=0$, and $\sign(a) = -1$ if $a<0$.
In fact, this is equivalent to the alternating minimization procedure where we iteratively compute
\begin{itemize}
\item
$z^{(t)}_r := \min_{z \in \{-1,1\}} (y_r - z \, x_r^\top \theta^{(t)})^2 = \sign(y_r \, x_r^\top \theta^{(t)})$ for $r \in [N]$;

\item
$\theta^{(t+1)} := \min_{\theta \in \cH} \sum_{r=1}^N (y_r - z^{(t)}_r x_r^\top \theta)^2 = \left(\sum_{r=1}^Nx_rx_r^\top\right)^\dag \sum_{r=1}^N z^{(t)}_r y_r x_r$.
\end{itemize}

We now explain why the iteration \eqref{eq:iteration-am} contracts to $\theta^*$ locally.
By slightly abusing the notation, we denote (only in this subsection) the noiseless Easy-EM and EM operators respectively by
$$
\bar Q(\theta) = \frac{d-1}{2N} \sum_{r=1}^N \sign(y_r \, x_r^\top \theta) \, y_r x_r, \qquad
\hat Q(\theta) = \hat \Sigma^\dag \bar Q(\theta) ,
$$
where the sample covariance $\hat \Sigma$ is defined in \eqref{eq:def-samp-cov}.
We first note that $\theta^*$ is a fixed point of the operator $\hat Q$.
Indeed, since $y_r = z_r x_r^\top \theta^*$, we have 
\begin{align*}
\hat Q(\theta^*) = \bigg(\sum_{r=1}^Nx_rx_r^\top\bigg)^\dag \sum_{r=1}^N \sign\left( z_r (x_r^\top \theta^*)^2 \right) \, z_r (x_r^\top \theta^*) x_r 
=  \bigg(\sum_{r=1}^Nx_rx_r^\top\bigg)^\dag \sum_{r=1}^N x_r x_r^\top \theta^* 
= \theta^*.
\end{align*}
Next, fix $\theta \in \cH$ in an $\ell_\infty$ neighborhood of $\theta^*$.
We have
$\hat Q(\theta) - \theta^*
= \hat \Sigma^\dag \left( \bar Q(\theta) - \bar Q(\theta^*) \right)$, so
\begin{equation}
\|\hat Q(\theta) - \theta^*\|_\infty
\le  \|\hat \Sigma^\dag\|_\infty \left\| \bar Q(\theta) - \bar Q(\theta^*) \right\|_\infty ,
\label{eq:infty-infty-bound}
\end{equation}
where $\|\hat \Sigma^\dag\|_\infty := \max_{v \in \cH: \|v\|_\infty = 1} \|\hat \Sigma^\dag v\|_\infty$.
It remains to show, for example, that for a quantity $L > 0$, we have
$$
\|\hat \Sigma^\dag\|_\infty \le L, \qquad
\left\| \bar Q(\theta) - \bar Q(\theta^*) \right\|_\infty \le \frac{1}{2L} ,
$$
so that we have the desired contraction $\|\hat Q(\theta) - \theta^*\|_\infty \le \frac 12 \|\theta - \theta^*\|_\infty$.

In fact, bounding $\|\hat \Sigma^\dag\|_\infty$ is one of the most important and technical steps of our proof.
We discuss this step in more detail in Section~\ref{sec:norm-inv-cov-discus} below and establish a bound formally in Proposition~\ref{prop:sigma-dag-bound}.
For the remaining portion of the proof, we must bound $\left\| \bar Q(\theta) - \bar Q(\theta^*) \right\|_\infty$. Note that
\begin{equation}
\bar Q(\theta) - \bar Q(\theta^*) = \frac{d-1}{2N} \sum_{r=1}^N \Big[ \sign \big( y_r x_r^\top \theta \big) - \sign \big( y_r x_r^\top \theta^* \big) \Big] y_r  x_r .
\label{eq:sign-sign}
\end{equation}
If $\theta$ is in an $\ell_\infty$ neighborhood of $\theta^*$, i.e., $\theta$ is close to $\theta^*$ entrywise, then $x_r^\top \theta = \theta_{i_r} - \theta_{j_r}$ is close to $x_r^\top \theta^* = \theta^*_{i_r} - \theta^*_{j_r}$.
As a result, $\sign \big( y_r x_r^\top \theta \big)$ will coincide with $\sign \big( y_r x_r^\top \theta^* \big)$ for most $r \in [N]$, and a majority of terms in the above sum will vanish.
Based on this intuition, we can perform an entrywise analysis of $\bar Q(\theta) - \bar Q(\theta^*)$ and obtain a bound on $\|\bar Q(\theta) - \bar Q(\theta^*)\|_\infty$.
We omit the details.

In the noisy setting, the strategy for analyzing the EM iteration \eqref{eq:iteration-em} is similar to the above noiseless analysis.
However, there are complications due to the presence of noise.
First, the fixed point of the EM operator $\hat Q$ is the random vector $\hat \theta$, not the deterministic vector $\theta^*$.
Therefore, we cannot directly show $Q(\hat \theta) = \hat \theta$; rather, it follows as a consequence of $\hat Q$ being a contraction locally around $\theta^*$.
Second, to prove that $\hat Q$ is contractive, we again reduce it to analyzing the Easy-EM operator $\bar Q$ via \eqref{eq:infty-infty-bound}, but now for $\theta, \theta' \in \cH$,
$$
\bar Q(\theta) - \bar Q(\theta') = \frac{d-1}{2N} \sum_{r=1}^N \bigg[ \tanh \Big( \frac{ y_r \, x_r^\top \theta }{\sigma^2} \Big) - \tanh \Big( \frac{ y_r \, x_r^\top \theta' }{\sigma^2} \Big) \bigg] y_r x_r .
$$
As the function $\tanh(\cdot)$ is a soft approximation of the $\sign(\cdot)$ function in \eqref{eq:sign-sign}, the intuition for why we can control $\bar Q(\theta) - \bar Q(\theta')$ remains the same, but a more quantitative analysis is necessary.
See Section~\ref{sec:convergence} for details.

\subsection{Expansion around the ground truth}

Our second main result is the sharp $\ell_2$ bound for the EM fixed point $\hat \theta$ in Theorem~\ref{thm:asymptotics}.
The strategy for proving this result consists in an expansion of $\hat \theta$ around the ground truth $\theta^*$.
Recall the definitions of $\bar Q$ and $\hat Q$ in \eqref{eq:qq}.
Since $\hat \theta$ is a fixed point of $\hat Q$, 
we have 
\begin{align*}
\hat\theta - \theta^* &= \hat Q(\hat{\theta}) - \hat Q(\theta^*) + \hat Q(\theta^*) - \theta^* \\
&= \hat \Sigma^\dag \, \frac{d-1}{2N} \sum_{r = 1}^N\bigg(\tanh\Big(\frac{y_rx_r^\top \hat\theta}{\sigma^2}\Big) - \tanh\Big(\frac{y_rx_r^\top \theta^*}{\sigma^2}\Big)\bigg)y_rx_r + \hat Q(\theta^*) - \theta^* .
\end{align*}
Applying Taylor's expansion of the function $\tanh(\cdot)$, we obtain
$$
\hat\theta - \theta^* \approx \hat \Sigma^\dag A (\hat \theta - \theta^*) + \hat Q(\theta^*) - \theta^* ,
$$
where
$A := \frac{d-1}{2N}\sum_{r = 1}^N\frac{y_r^2}{\sigma^2}\tanh' \left(\frac{y_r x_r^\top \theta^*}{\sigma^2} \right) x_r x_r^\top .$
By analyzing the matrix $A$, we then show that $\|\hat\Sigma^\dag A (\hat \theta - \theta^*)\|_2 \ll \|\hat \theta - \theta^*\|_2$ and so
$$
\|\hat \theta - \theta^*\|_2 \approx \|\hat Q(\theta^*) - \theta^*\|_2 .
$$
It remains to study how far the one-step EM iterate $\hat Q(\theta^*)$ moves away from the ground truth $\theta^*$.
Since $\hat Q(\theta^*)$ is an independent sum conditional on the covariates $x_1, \dots, x_N$, and $\theta^*$ is deterministic, we can apply tools from matrix concentration to obtain the desired sharp bound
\begin{equation*}
\|\hat Q(\theta^*) - \theta^*\|_2^2 \le (1+o(1)) \, \sigma^2 \frac{d-1}{2N} \tr(\hat \Sigma^\dag) .
\end{equation*}

\subsection{Infinity operator norm of the inverse covariance}
\label{sec:norm-inv-cov-discus}
As discussed in Section~\ref{sec:contraction-discussion} above, a key ingredient in our analysis of the EM iteration is an upper bound on $\|\hat \Sigma^\dag\|_\infty := \max_{v \in \cH: \|v\|_\infty = 1} \|\hat \Sigma^\dag v\|_\infty$.
Recall that $\E[\hat \Sigma]$ is the identity map on $\cH$ by the discussion after the definition of $\hat \Sigma$ in \eqref{eq:def-samp-cov}.
Hence, it is plausible that $\hat \Sigma$ and $\hat \Sigma^\dag$ have bounded norms.
To prove $\|\hat \Sigma^\dag\|_\infty \le L$ for a quantity $L>0$, we start with the relation (see Lemma~\ref{lem:norm-equiv})
$$
\|\hat \Sigma^\dag \|_\infty = \Big( \min_{u \in \cH : \|u\|_\infty = 1} \|\hat \Sigma u\|_\infty \Big)^{-1} .
$$
It therefore suffices to prove
\begin{equation}
\min_{u \in \cH : \|u\|_\infty = 1} \|\hat \Sigma u\|_\infty \ge 1/L .
\label{eq:connectivity-lower-bound-discussion}
\end{equation}

The proof of \eqref{eq:connectivity-lower-bound-discussion} is the most technical step in our analysis and spans Lemmas~\ref{lem:u-infty-bd}, \ref{lem:sigma-infty-bd}, \ref{lem:fixed-sets-bd}, \ref{lem:two-level-bd}, and Proposition~\ref{prop:sigma-dag-bound}.
In short, it is not sufficient to control $\|\hat \Sigma u\|_\infty$ for each fixed $u$ and then apply a union bound.
Instead, we carefully split the constraint set $\{u \in \cH : \|u\|_\infty = 1\}$ into a union of subsets according to the sizes of entries $u_1, \dots, u_d$.
On each subset consisting of vectors $u$, we use the specific properties of sizes of entries $u_1, \dots, u_d$ to bound $\|\hat \Sigma u\|_\infty$ simultaneously for all $u$ in that subset.
Finally, we combine all the subsets using a union bound.
See Section~\ref{sec:sample-covariance-proof} for details.

While \eqref{eq:connectivity-lower-bound-discussion} arises as a technical ingredient in our work, we believe the analysis of the quantity $\min_{u \in \cH : \|u\|_\infty = 1} \|\hat \Sigma u\|_\infty$ is interesting in its own right in view of its connection to the literature.
First, consider the graph $G$ with vertex set $[d]$ and edge set $\{(i_r, j_r) : x_r = e_{i_r} - e_{j_r}, \, r \in [N]\}$.
Then the graph $G$ resembles an Erd\H{o}s--R\'enyi graph $G\big( d, N/\binom{d}{2} \big)$ except that the edges are sampled with replacement.
Moreover, it is not hard to see that $\hat \Sigma$ is a rescaled version of the Laplacian matrix of the graph $G$, which is a well-researched object.
In fact, if the $\ell_\infty$-norms in \eqref{eq:connectivity-lower-bound-discussion} were replaced by the $\ell_2$-norms, then the quantity $\min_{u \in \cH : \|u\|_2 = 1} \|\hat \Sigma u\|_2$ would be the second-smallest eigenvalue of the Laplacian matrix, known as the algebraic connectivity or the Fiedler value.
This eigenvalue for an Erd\H{o}s--R\'enyi graph has long been studied; see, e.g., \cite{feige2005spectral,coja2007laplacian,chung2011spectra,kolokolnikov2014algebraic}.
However, we are not aware of an existing bound for the $\ell_\infty$-norm that would imply \eqref{eq:connectivity-lower-bound-discussion}.

Moreover, if the minimization problem in \eqref{eq:connectivity-lower-bound-discussion} were over $u \in \{-1,1\}^d$, then it would be related to the vast literature on discrepancy theory: $\min_{u \in \{-1,1\}^d} \|\hat \Sigma u\|_\infty$ is the combinatorial discrepancy of the matrix $\hat \Sigma$.
We refer the interested reader to the celebrated work by \cite{spencer1985six} which has generated extensive research on this topic, and see, e.g., \cite{perkins2021frozen,abbe2022proof,altschuler2022fluctuations} for recent breakthroughs. The relaxation that we study is thus a natural object.

\subsection{Proof organization}
Finally, we overview how the proofs are organized in Section~\ref{sec:analysis}.
Recall that our results do not require a fresh sample for each step of the EM iteration.
To achieve this, we first condition on a high-probability event globally, and then show that the EM operator is contractive deterministically on this event.
More precisely, this high-probability event consists of two sets of conditions: Section~\ref{sec:prelim} proves that certain preliminary conditions hold for the observations $(x_r, y_r)_{r=1}^N$ with high probability (Lemma~\ref{lem:high-prob-bds});
Section~\ref{sec:sample-covariance-proof} mainly establishes a high probability bound on $\|\hat \Sigma^\dag\|_\infty$ (Proposition~\ref{prop:sigma-dag-bound}) together with a few other conditions on $\hat \Sigma$.
With these conditions assumed, Lemma~\ref{lem:easy-contraction} in Section~\ref{sec:convergence} shows that the EM operator is contractive locally around $\theta^*$, and note that this lemma is completely deterministic.
Finally, using an additional bound on $\|\hat Q(\theta^*) - \theta^*\|_\infty$ in Section~\ref{sec:one-step-iter}, Proposition~\ref{prop:em-contraction} in Section~\ref{sec:convergence} summarizes the convergence of the EM sequence and directly leads to Theorem~\ref{thm:em-contraction}.

For Theorem~\ref{thm:asymptotics}, we first need a sharp analysis of $\|\hat Q(\theta^*) - \theta^*\|_2$ in Section~\ref{sec:one-step-iter}: its expectation and concentration are controlled by Lemmas~\ref{lem:l2-err-exp} and~\ref{lem:theta_ell2_bd} respectively.
Then, in Section~\ref{sec:low-noise}, we analyze the lower order terms and conclude with Proposition~\ref{prop:asymptotics}, which immediately implies Theorem~\ref{thm:asymptotics}.


\section{Analysis of the EM algorithm}
\label{sec:analysis}

Throughout the proof section, we use $C, C_1, C'$, etc., to denote universal, positive constants that may change from line to line.
Recall the definitions of $\bar Q$ and $\hat Q$ in \eqref{eq:qq} and that $y_r = z_r \, x_r^\top \theta^* + \epsilon_r$ where $z_r = \pm 1$. 
Since $\tanh(\cdot)$ is an odd function, we may assume without loss of generality that $z_r = 1$ for all $r = 1, \dots, N$, which does not change the EM operator. 
Therefore, we take the liberty of assuming that the observations take the form $y_r = x_r^\top\theta^* + \eps_r$ when analyzing the EM operator.

\subsection{Preliminaries}
\label{sec:prelim}

For $i \in [d]$, define $\cR_i$ to be the set of indices $r \in [N]$ for which the entry $(x_r)_i$ is non-zero, i.e.,
\begin{equation}
\cR_i := \big\{ r \in [N] : x_r = \pm (e_i - e_j) , \, j \in [d] \setminus \{i\} \big\} ,
\label{eq:def-ri}
\end{equation}
where $\pm$ is $+$ if $i<j$ and is $-$ if $i>j$. 
In addition, for a fixed separation parameter $\Delta > 0$, define
\begin{subequations}
\begin{align}
S_i(\Delta) &:= \{j \in [d] \setminus \{i\} : |\theta_i^* - \theta_j^*| \leq \Delta\}  \quad \text{ and }
\label{def:S_i} \\
\cR_i(\Delta) &:= \{r \in [N] : x_r = \pm(e_i - e_j), \, j \in S_i(\Delta) \}. \label{def:R_i}
\end{align}
\end{subequations}
In other words, $S_i(\Delta)$ is the set of indices $j$ such that $\theta^*_j$ is close to $\theta^*_i$, and $\cR_i(\Delta)$ is a subset of $\cR_i$ consisting of indices $r \in [N]$ for which $x_r$ compares $\theta^*_i$ to a nearby parameter $\theta^*_j$.

\begin{lemma}
\label{lem:high-prob-bds}
There is an absolute constant $C > 0$ such that the following holds for any $\delta \in (0,0.1)$.
Suppose that $N \ge C d \log \frac {d}{\delta} .$
It holds with probability at least $1 - \delta$ that, for all $i \in [d]$ and all $\Delta \ge \sigma$:
\begin{itemize}
\item
$\frac{1.9 N}{d} \le |\cR_i| \le \frac{2.1 N}{d}$;

\item
$\sum_{r \in \cR_i(\Delta)} |y_r| \le C \Delta \left( |S_i(\Delta)| \frac{N}{d^2} + \log\frac {d}{\delta} \right)$;

\item
$\sum_{r \in \cR_i(\Delta)} y_r^2 \le C \Delta^2 \left( |S_i(\Delta)| \frac{N}{d^2} + \log\frac {d}{\delta} \right)$.
\end{itemize}
\end{lemma}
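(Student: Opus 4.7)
\bigskip
\noindent\textbf{Proof plan.}

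My plan is to handle the three bullets separately, using standard binomial/Bernstein/chi-squared concentration together with a union bound over a suitably discretized collection of events.

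\emph{First bullet.} For each fixed $i \in [d]$, the quantity $|\cR_i|$ is a sum of $N$ i.i.d.\ Bernoulli indicators since, independently in $r$, the uniformly random pair $(i_r, j_r) \in \binom{[d]}{2}$ contains the fixed index $i$ with probability $(d-1)/\binom{d}{2} = 2/d$. Hence $|\cR_i| \sim \mathrm{Bin}(N, 2/d)$ with mean $2N/d$, and a multiplicative Chernoff bound gives $\Pr\{|\cR_i|/(2N/d) \notin [0.95, 1.05]\} \le 2\exp(-c N/d)$ for an absolute constant $c>0$. A union bound over $i \in [d]$ and the hypothesis $N \ge C d\log(d/\delta)$ yield the claim on an event of probability $\ge 1-\delta/3$.

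\emph{Discretizing the $\Delta$-parameter.} The key observation for the last two bullets is that, for each fixed $i$, the set-valued function $\Delta \mapsto S_i(\Delta)$ is piecewise constant: it only jumps at the at most $d-1$ values $\{|\theta^*_i - \theta^*_j| : j \neq i\}$. On each piece, the left-hand sides $\sum_{r \in \cR_i(\Delta)} |y_r|$ and $\sum_{r \in \cR_i(\Delta)} y_r^2$ are constant in $\Delta$, while the right-hand sides (proportional to $\Delta$ and $\Delta^2$, respectively) are strictly increasing in $\Delta$. Thus it suffices to verify the bounds at $\Delta$ equal to the left endpoint of each piece intersected with $[\sigma, \infty)$, giving at most $d$ critical values $\Delta$ per index $i$, or $O(d^2)$ critical pairs $(i, \Delta)$ overall.

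\emph{Second and third bullets, fixed critical pair.} Fix $i$ and a critical $\Delta \ge \sigma$, and write $S = S_i(\Delta)$, $k = |S|$. Conditionally on the covariates, use the deterministic bound $|y_r| \le \Delta + |\eps_r|$ for $r \in \cR_i(\Delta)$ (since $r \in \cR_i(\Delta)$ forces $|x_r^\top \theta^*| \le \Delta$), so
\[
\sum_{r \in \cR_i(\Delta)} |y_r| \le |\cR_i(\Delta)| \cdot \Delta + \sum_{r \in \cR_i(\Delta)} |\eps_r|, \qquad \sum_{r \in \cR_i(\Delta)} y_r^2 \le 2|\cR_i(\Delta)| \cdot \Delta^2 + 2\sum_{r \in \cR_i(\Delta)} \eps_r^2.
\]
The count $|\cR_i(\Delta)| \sim \mathrm{Bin}(N, 2k/(d(d-1)))$ satisfies $|\cR_i(\Delta)| \le C(kN/d^2 + \log(d/\delta))$ with probability $\ge 1-\delta/(C_1 d^2)$ by a standard binomial tail bound (e.g.\ Bernstein). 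Conditionally on $|\cR_i(\Delta)|$, $\sigma^{-2}\sum \eps_r^2$ is $\chi^2$-distributed, and standard chi-squared tails give $\sum \eps_r^2 \le C\sigma^2(|\cR_i(\Delta)| + \log(d/\delta))$; likewise $\sum|\eps_r| \le C\sigma(|\cR_i(\Delta)| + \log(d/\delta))$ by subexponential Bernstein. Combining these and using $\sigma \le \Delta$ yields both bullets at the fixed pair.

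\emph{Union bound and obstacles.} Apply the union bound over the $O(d^2)$ critical pairs $(i, \Delta)$ and over $i$ for the first bullet, absorbing the resulting $\log(d^2/\delta)$ into $C \log(d/\delta)$ up to constants. The only subtlety — and the main technical point of the argument — is recognizing that the continuous family of events indexed by $\Delta \ge \sigma$ collapses to $O(d^2)$ discrete events by the piecewise-constant/monotonicity pairing described above; once this reduction is made, the remaining concentration steps are standard.
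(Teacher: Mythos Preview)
Your proposal is correct and follows essentially the same approach as the paper: binomial concentration for $|\cR_i|$ and $|\cR_i(\Delta)|$, the decomposition $|y_r| \le \Delta + |\eps_r|$ (and its square) for $r \in \cR_i(\Delta)$, concentration for $\sum|\eps_r|$ and $\sum \eps_r^2$ conditionally on the covariates, and the observation that $S_i(\Delta)$ (hence $\cR_i(\Delta)$) is piecewise constant in $\Delta$ with at most $d-1$ jumps, which reduces the union bound to $O(d^2)$ events. The only cosmetic difference is that the paper uses a sub-Gaussian tail for $\sum_{r \in \cR_i(\Delta)} |\eps_r|$ rather than a sub-exponential one, but either suffices.
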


\begin{proof}
For a fixed $i \in [d]$, each $x_r$ is equal to $\pm(e_i - e_j)$ for some $j \in [d] \setminus \{i\}$ with probability $(d-1)/\binom{d}{2} = 2/d$.
Hence $|\cR_i| \sim \Bin\left(N, 2/d\right)$.
It then follows from a binomial tail bound for any $\delta \in (0, 0.1)$ that
$$
\bigg| |\cR_i| - \frac{2N}{d} \bigg| \le C_1 \sqrt{\frac Nd \log \frac {d}{\delta} } + C_1 \log \frac {d}{\delta}
$$
with probability at least $1 - \frac \delta d$ for an absolute constant $C_1 > 0$.
Taking a union bound over $i \in [d]$ and using the condition $N \ge C d \log \frac {d}{\delta}$, we obtain the desired bound on $|\cR_i|$.

Similarly, $|\cR_i(\Delta)| \sim \Bin\big(N, \frac{2|S_i(\Delta)|}{d(d-1)}\big)$ and
\[ |\cR_i(\Delta)| \leq \frac{2N|S_i(\Delta)|}{d(d-1)} + C_2 \sqrt{\frac{N|S_i(\Delta)|}{d(d-1)}\log\frac d\delta} + C_2 \log\frac d\delta
\le \frac{3N|S_i(\Delta)|}{d(d-1)} + C_3 \log\frac d\delta \]
with probability at least $1 - \frac{\delta}{d^2}$ for absolute constants $C_2, C_3 > 0$.

Condition on $\cR_i(\Delta)$ henceforth.
For $r \in \cR_i(\Delta)$, we have $|y_r| = |x_r^\top \theta^* + \eps_r| \le \Delta + |\eps_r|$ and so $y_r^2 \le 2 \Delta^2 + 2 \eps_r^2$.
Consequently, by a sub-Gaussian tail bound on $\sum_{r \in \cR_i(\Delta)} |\eps_r|$, we obtain
$$
\sum_{r \in \cR_i(\Delta)} |y_r|
\le \Delta \, |\cR_i(\Delta)| + \sum_{r \in \cR_i(\Delta)} |\eps_r|
\le \Delta \, |\cR_i(\Delta)| + \sqrt{\frac{2}{\pi}} \sigma \, |\cR_i(\Delta)| + C_4 \sigma \sqrt{ |\cR_i(\Delta)| \log \frac d \delta }
$$
with probability at least $1 - \frac{\delta}{d^2}$ for an absolute constant $C_4 > 0$.
Moreover, by a $\chi^2$ tail bound on $\sum_{r \in \cR_i(\Delta)} \eps_r^2$, we obtain
$$
\sum_{r \in \cR_i(\Delta)} y_r^2
\le 2 \Delta^2 \, |\cR_i(\Delta)| + 2 \sum_{r \in \cR_i(\Delta)} \eps_r^2
\le 2 \Delta^2 \, |\cR_i(\Delta)| + 2 \sigma^2 \, |\cR_i(\Delta)| + C_5 \sigma^2 \Big( \sqrt{ |\cR_i(\Delta)| \log \frac d \delta } + \log \frac d \delta \Big)
$$
with probability at least $1 - \frac{\delta}{d^2}$ for an absolute constant $C_5 > 0$.

Combining the above three bounds and using the condition $\Delta \ge \sigma$, we obtain
\begin{align*}
\sum_{r \in \cR_i(\Delta)} |y_r|
&\le 2 \Delta \, \Big( \frac{3N|S_i(\Delta)|}{d(d-1)} + C_3 \log\frac d\delta \Big) + C_4 \sigma \sqrt{ \Big( \frac{3N|S_i(\Delta)|}{d(d-1)} + C_3 \log\frac d\delta \Big) \log \frac d \delta } \\
&\le \frac{7 N \Delta |S_i(\Delta)|}{d(d-1)} + C_6 \Delta \log\frac d\delta
\end{align*}
for an absolute constant $C_6 > 0$
and
\begin{align*}
\sum_{r \in \cR_i(\Delta)} y_r^2
&\le 4 \Delta^2 \, \Big( \frac{3N|S_i(\Delta)|}{d(d-1)} + C_3 \log\frac d\delta \Big) + C_5 \sigma^2 \bigg( \sqrt{ \Big( \frac{3N|S_i(\Delta)|}{d(d-1)} + C_3 \log\frac d\delta \Big) \log \frac d \delta } + \log \frac d \delta \bigg) \\
&\le \frac{13 N \Delta^2 |S_i(\Delta)|}{d(d-1)} + C_7 \Delta^2 \log\frac d\delta
\end{align*}
for an absolute constant $C_7 > 0$.

Finally, note that the sets $S_i(\Delta)$ are nested as $\Delta$ varies, and $S_i(\Delta)$ can take at most $d-1$ values; hence, the same statements are true for $\cR_i(\Delta)$. We can then take a union bound over all ($\le d-1$) possibilities for $\cR_i(\Delta)$ as $\Delta$ varies, together with a union bound over $i \in [d]$ to conclude. 
\end{proof}

\subsection{Sample covariance}
\label{sec:sample-covariance-proof}

In this section, we study the sample covariance matrix \eqref{eq:def-samp-cov}.
Recall that $\cH$ denotes the orthogonal complement of $\bone$ in $\R^d$, and the pseudoinverse $\hat \Sigma^\dag$ is the inverse of $\hat \Sigma$ when viewed as a map on $\cH$.
We first bound the spectral norms of $\hat \Sigma$ and $\hat \Sigma^\dag$. 

\begin{proposition}\label{prop:op_norm}
There is an absolute constant $C > 0$ such that the following holds for any $\delta \in (0,0.1)$.
If $N \ge C d \log \frac {d}{\delta}$, then
with probability at least $1 - \delta$, 
\begin{itemize}
\item
$\|\hat\Sigma\|_{op} \leq 3$;

\item
$\hat \Sigma$ has $d-1$ nonzero eigenvalues;

\item
$\|\hat\Sigma^\dag\|_{op} \leq 5$.
\end{itemize}
\end{proposition}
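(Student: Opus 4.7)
\textbf{Proof plan for Proposition~\ref{prop:op_norm}.} The strategy is to apply a matrix Bernstein inequality to the sum $\hat\Sigma = \frac{d-1}{2N}\sum_{r=1}^N x_r x_r^\top$ and show that $\hat\Sigma$ is close, in operator norm, to its expectation $I - \frac{1}{d}\bone\bone^\top$, which is the identity on $\cH$ and zero on $\mathrm{span}(\bone)$. All three conclusions will then follow.

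First, I will compute $\E[\hat\Sigma]$ explicitly. Since $(i_r,j_r)$ is uniform over $\binom{[d]}{2}$, a direct computation gives
\[
\E[x_r x_r^\top] = \frac{2}{d(d-1)} \sum_{i<j} (e_i - e_j)(e_i - e_j)^\top = \frac{2}{d-1}\Big(I - \frac{1}{d}\bone\bone^\top\Big),
\]
so $\E[\hat\Sigma] = I - \frac{1}{d}\bone\bone^\top$, whose eigenvalues are $1$ (with multiplicity $d-1$, eigenspace $\cH$) and $0$ (on $\mathrm{span}(\bone)$). In particular $\|\E[\hat\Sigma]\|_{op} = 1$.

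Second, I will apply matrix Bernstein to the mean-zero summands $Y_r := \frac{d-1}{2N}\bigl(x_r x_r^\top - \E[x_r x_r^\top]\bigr)$. Since $\|x_r x_r^\top\|_{op} = \|x_r\|_2^2 = 2$ and $\|\E[x_r x_r^\top]\|_{op} = \tfrac{2}{d-1}$, we have $\|Y_r\|_{op} \le \tfrac{d-1}{N}$. For the variance, I will use the identity $(x_r x_r^\top)^2 = 2\, x_r x_r^\top$ to obtain
\[
\Big\|\sum_{r=1}^N \E[Y_r^2]\Big\|_{op} \lesssim \frac{d-1}{N}.
\]
Matrix Bernstein then yields, for any fixed $t \in (0,1)$,
\[
\p\bigl(\|\hat\Sigma - \E[\hat\Sigma]\|_{op} \ge t\bigr) \le 2d \exp\!\Big(\frac{-c N t^2}{d}\Big).
\]
Choosing $t = 1/2$ and using the hypothesis $N \ge Cd\log(d/\delta)$ for $C$ large enough gives $\|\hat\Sigma - \E[\hat\Sigma]\|_{op} \le 1/2$ with probability at least $1-\delta$.

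Third, I will combine these bounds to obtain all three conclusions. The triangle inequality gives $\|\hat\Sigma\|_{op} \le 1 + 1/2 \le 3$. On $\cH$, Weyl's inequality implies every eigenvalue of $\hat\Sigma$ lies in $[1/2,\,3/2]$, so $\hat\Sigma$ restricted to $\cH$ is invertible (giving the $d-1$ nonzero eigenvalues), and hence $\|\hat\Sigma^\dag\|_{op} \le 2 \le 5$. There is no real obstacle here beyond carefully bookkeeping the matrix Bernstein parameters; the main technical step is verifying that the variance proxy and almost-sure bound on $Y_r$ both scale as $(d-1)/N$, which makes $N \gtrsim d\log(d/\delta)$ the correct threshold.
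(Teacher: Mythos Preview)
Your proposal is correct and is in fact more direct than the paper's own argument. The paper treats $\hat\Sigma$ as a rescaled graph Laplacian and decomposes it as $\frac{d-1}{2N}(D-A)$, where $D$ is the diagonal degree matrix and $A$ the adjacency matrix; it then lower-bounds $\min_{v\in\cH,\|v\|_2=1} v^\top D v$ via the binomial concentration of $|\cR_i|$ (Lemma~\ref{lem:high-prob-bds}) and separately upper-bounds $\max_{v} v^\top A v$ by applying matrix Bernstein to the centered adjacency matrix $M=\frac{N}{\binom{d}{2}}(J-I)-A$. You bypass this two-step decomposition entirely by applying matrix Bernstein directly to $\hat\Sigma-\E[\hat\Sigma]$, exploiting the identity $(x_r x_r^\top)^2 = 2\,x_r x_r^\top$ to get the variance proxy cleanly. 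Your route yields tighter constants (indeed $\|\hat\Sigma^\dag\|_{op}\le 2$ rather than $5$) with less bookkeeping. The paper's decomposition reflects the spectral-graph-theory viewpoint on Laplacian eigenvalues alluded to in Section~\ref{sec:norm-inv-cov-discus}; your approach is the natural matrix-concentration argument and loses nothing for this proposition.
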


\begin{proof}
Since $\hat \Sigma$ is positive semidefinite, we have
\(\|\hat \Sigma\|_{op} 
= \lambda_{\max}(\hat \Sigma).\)
Let us first upper-bound $\lambda_{\max}(\hat\Sigma)$.
Let $v \in \R^d$ such that $\|v\|_2 = 1$. By the definition \eqref{eq:def-samp-cov}, we have
\begin{align*}
v^\top\hat\Sigma v &= \frac{d-1}{2N}\sum_{r = 1}^N(x_r^\top v)^2 
= \frac{d-1}{2N}\sum_{r = 1}^N(v_{i_r} - v_{j_r})^2 
\leq \frac{d-1}{N}\sum_{r = 1}^N(v_{i_r}^2 + v_{j_r}^2) 
= \frac{d-1}{N}\sum_{i = 1}^d|\mathcal{R}_i|v_i^2 ,
\end{align*}
where the inequality follows since $(a-b)^2 \leq 2(a^2+b^2)$ for any $a,b\in \R$, and $\cR_i$ is defined in \eqref{eq:def-ri}. 
By the bound on $|\mathcal{R}_i|$ from Lemma \ref{lem:high-prob-bds}, we conclude that with probability at least $1 - \delta$, 
$$
\|\hat \Sigma\|_{op} = \max_{v \in \R^d : \|v\|_2 = 1} v^\top \hat \Sigma v \le \frac{d-1}{N} \cdot \frac{2.1N}{d} \le 3.
$$

Note since $\hat\Sigma^\dag$ is also positive semidefinite, we have $\|\hat\Sigma^\dag\|_{op} = \lambda_{\max}(\hat\Sigma^\dag)$. Furthermore, we have
\[\lambda_{\max}(\hat\Sigma^\dag) = 1/\lambda,\]
where $\lambda$ is the minimum nonzero eigenvalue of $\hat\Sigma$. It suffices to lower-bound the smallest eigenvalue of $\hat \Sigma$ as a map on $\cH$, and then this eigenvalue is precisely $\lambda$.

To this end, let us make a few definitions.
First define the set
\begin{align*}
S & :=  \{v\in \cH\,:\, \|v\|_2 = 1\}.
\end{align*} 
Then define the random matrices
\begin{align*}
D &\in \R^{d\times d}, \text{ such that } D_{ij} = \left\{\begin{array}{cc}
|\mathcal{R}_i| & i = j, \\
0 & i \neq j,
\end{array}\right. \\
A &\in \R^{d\times d}, \text{ such that } A_{ij} = \left\{\begin{array}{cc}
\sum_{r = 1}^N\bbone\{x_r = \pm(e_i - e_j)\} & i \neq j, \\
0 & i = j,
\end{array}\right. \\
M &= \frac{N}{\binom{d}{2}}(J - I) - A,
\end{align*}
where $J = \bone\bone^\top$.
With these in hand, we have the following
\begin{equation}
\lambda = \min_{v\in S}v^\top\hat\Sigma v = \frac{d-1}{2N}\min_{v\in S}v^\top(D - A)v \geq \frac{d-1}{2N}\left(\min_{v\in S}v^\top Dv - \max_{u \in S}u^\top Au\right).
\label{eq:lambda-lwbd}
\end{equation}

Let us consider the first term. We have
\begin{align}\label{eq:degree_diag_lb}
\min_{v\in S} v^\top Dv = \min_{v\in S} \sum_{i = 1}^d|\mathcal{R}_i|v_i^2 \geq \min_{v\in S} \sum_{i = 1}^d \frac{1.9 N}{d}\,v_i^2 = \frac{1.9 N}{d},
\end{align}
where the inequality holds with probability at least $1-\delta$ by Lemma \ref{lem:high-prob-bds}. 

Let us now consider the second term. 
Note that $(I-J) u = u$ for $u \in S$. 
By the definition of $A$, we have
\begin{align}\label{eq:adjacency_decomposition}
\max_{u \in S}u^\top Au 
&= \max_{u \in S} u^\top (-M)u + \frac{N}{\binom{d}{2}} u^\top (I-J) u  \nonumber \\
&\leq \max_{u, v\in S}u^\top(-M)v + \frac{N}{\binom{d}{2}} \nonumber \\
&\leq \|(-M)\|_{op} + \frac{N}{\binom{d}{2}} = \|M\|_{op} + \frac{N}{\binom{d}{2}}.
\end{align}
It remains to bound $\|M\|_{op}$ using Lemma~\ref{lem:matrix_bernstein}. 
To this end, let us define random matrices
$$
X_r = \frac{1}{\binom{d}{2}}(J - I) - \sum_{1\leq i < j \leq d}\bbone\{x_r = \pm(e_i - e_j)\}(e_ie_j^\top + e_je_i^\top), 
$$
and then $M = \sum_{r = 1}^N X_r$.
First, let us show $\EE[X_r] = 0$. We have
\begin{align*}
\EE[X_r] = \frac{1}{\binom{d}{2}}(J - I) - \sum_{1\leq i < j \leq d}\frac{1}{\binom{d}{2}}(e_ie_j^\top + e_je_i^\top) 
=  \frac{1}{\binom{d}{2}}\bigg(J - I - \sum_{i \neq j}e_ie_j^\top\bigg) = 0.
\end{align*}
Next, we find a bound on $\|X_r\|_{op}$. Note that $\sum_{1\leq i < j \leq d}\bbone\{x_r = \pm(e_i - e_j)\} = 1$ for every $r\in [N]$. With this in mind, we have
\[\|X_r\|_{op} \leq \frac{1}{\binom{d}{2}}(\|J\|_{op} + \|I\|_{op}) + \sum_{1\leq i < j \leq d}\bbone\{x_r = \pm(e_i - e_j)\} \, \|e_ie_j^\top + e_je_i^\top\|_{op} = \frac{d+1}{\binom{d}{2}} + 1 \leq 2.\]
Finally, we need a bound on $\|\sum_{r = 1}^N\EE[X_r^2]\|_{op}$. To this end, we compute
\begin{align*}
X_r^2 &= \frac{1}{\binom{d}{2}^2}((d-2)J + I) + \sum_{1 \leq i < j \leq d}\bbone\{x_r = \pm (e_i - e_j)\}(e_ie_i^\top + e_je_j^\top) \\
&\quad - \frac{1}{\binom{d}{2}}\sum_{1 \leq i < j \leq d}\bbone\{x_r = \pm (e_i - e_j)\}\left(\bone e_j^\top + \bone e_i^\top + e_i\bone^\top + e_j \bone ^\top - 2(e_ie_j^\top + e_je_i^\top)\right),
\end{align*}
which implies
\begin{align*}
\EE[X_r^2] &= \frac{1}{\binom{d}{2}^2}((d-2)J + I) + \frac{1}{\binom{d}{2}}\sum_{1 \leq i < j \leq d}(e_ie_i^\top + e_je_j^\top) \\
&\quad - \frac{1}{\binom{d}{2}^2}\sum_{1 \leq i < j \leq d}\left(\bone e_j^\top + \bone e_i^\top + e_i\bone^\top + e_j \bone ^\top - 2(e_ie_j^\top + e_je_i^\top)\right) \\
&= \frac{1}{\binom{d}{2}^2}((d-2)J + I) + \frac{d-1}{\binom{d}{2}}\,I - \frac{2}{\binom{d}{2}^2}((d-2)J + I) \\
&= \frac{d-1}{\binom{d}{2}}\,I - \frac{1}{\binom{d}{2}^2}((d-2)J + I).
\end{align*}
Finally, we have
\[v(M) = \left\| N \E[X_1^2] \right\|_{op} \leq N\left(\frac{2}{d} + \frac{4(d-1)^2}{d^2(d-1)^2}\right) \leq \frac{4N}{d}.\]
By Lemma~\ref{lem:matrix_bernstein}, we have that for some absolute constant $C' > 0$,
\[\|M\|_{op} \leq C' \left(\sqrt{\frac{N}{d}\log \frac{d}{\delta}} + \log \frac{d}{\delta}\right),\]
with probability at least $1 - \delta$. 

Putting the above together with \eqref{eq:degree_diag_lb} and \eqref{eq:adjacency_decomposition}, we have
\[\min_{v\in S}v^\top Dv - \max_{u \in S}u^\top Au \geq \frac{1.9 N}{d} - \frac{N}{\binom{d}{2}} - C'\left(\sqrt{\frac{N}{d}\log \frac{d}{\delta}} + \log \frac{d}{\delta}\right),\]
with probability at least $1 - 2\delta$. Since $d \ge 3$, combining this with \eqref{eq:lambda-lwbd} yields that for some absolute constant $C'' > 0$,
\[\lambda \geq 0.3 - C''\left(\sqrt{\frac{d}{N}\log \frac{d}{\delta}} + \frac{d}{N}\log \frac{d}{\delta}\right),\]
with probability at least $1- 2\delta$. In particular, for $N \geq C\,d\log \frac{d}{\delta}$, we have $\lambda \ge 0.2$ and 
\[ \|\hat\Sigma^\dag\|_{op} = 1/\lambda \leq 5 \]
with probability at least $1-2\delta$.
\end{proof}

\begin{lemma}
\label{lem:trace-inv-lower}
There is an absolute constant $C > 0$ such that the following holds for any $\delta \in (0,0.1)$.
Suppose that $N \ge C d \log \frac d\delta .$
With probability at least $1 - \delta$, we have
\[ \tr(\hat\Sigma^\dag) \ge (d-1)/3 .\]
\end{lemma}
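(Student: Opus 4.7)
The plan is to derive this bound as an immediate corollary of Proposition~\ref{prop:op_norm}, which already does the heavy lifting. The key observation is that $\tr(\hat\Sigma^\dag)$ is a sum of reciprocals of the nonzero eigenvalues of $\hat\Sigma$, and we have a uniform upper bound on those eigenvalues.

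More precisely, I would proceed as follows. First, invoke Proposition~\ref{prop:op_norm} (with the appropriate $\delta$) to conclude that, on an event of probability at least $1-\delta$, the matrix $\hat\Sigma$ has exactly $d-1$ nonzero eigenvalues and $\|\hat\Sigma\|_{op} \le 3$. Denote these $d-1$ nonzero eigenvalues by $\lambda_1,\ldots,\lambda_{d-1}$; each satisfies $0 < \lambda_i \le 3$ since $\hat\Sigma$ is positive semidefinite. Second, recall that the pseudoinverse $\hat\Sigma^\dag$ shares its eigenbasis with $\hat\Sigma$ and has eigenvalues $1/\lambda_1,\ldots,1/\lambda_{d-1}$ together with a zero eigenvalue along $\bone$. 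Therefore
\[
\tr(\hat\Sigma^\dag) = \sum_{i=1}^{d-1} \frac{1}{\lambda_i} \ge \sum_{i=1}^{d-1} \frac{1}{3} = \frac{d-1}{3},
\]
yielding the desired bound on the same high-probability event.

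There is no real obstacle here; the statement is essentially a restatement of the spectral-norm bound on $\hat\Sigma$ after inversion. One only needs to make sure the constant $C$ (and implicitly the constant multiplying $\log(d/\delta)$ inside the proposition's hypothesis) is chosen large enough that Proposition~\ref{prop:op_norm} applies with the given $\delta$; adjusting $C$ by an absolute constant factor is harmless.
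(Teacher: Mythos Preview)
Your proposal is correct and matches the paper's proof essentially verbatim: the paper conditions on the event from Proposition~\ref{prop:op_norm}, uses $\lambda_i \le \|\hat\Sigma\|_{op} \le 3$ for each of the $d-1$ nonzero eigenvalues, and concludes $\tr(\hat\Sigma^\dag) = \sum_{i=1}^{d-1} 1/\lambda_i \ge (d-1)/3$.
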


\begin{proof}
Condition on the event where the bounds in Proposition~\ref{prop:op_norm} hold.
Let $\lambda_1 \ge \cdots \ge \lambda_{d-1} > 0$ be the nonzero eigenvalues of $\hat \Sigma$.
Then we have $\lambda_1 \le 3$ and so
\(
\tr(\hat\Sigma^\dag) = \sum_{i=1}^{d-1} \frac{1}{\lambda_i} \ge \frac{d-1}{3} .
\)
\end{proof}

For any linear map $A : \cH \to \cH$, define the norm
\begin{equation*}
\|A\|_\infty := \max_{v \in \cH: \|v\|_\infty = 1} \|A v\|_\infty .
\end{equation*}
We bound the norm $\|\hat \Sigma^\dag\|_\infty$ in the rest of this section: starting with Lemmas~\ref{lem:u-infty-bd}, \ref{lem:sigma-infty-bd}, \ref{lem:fixed-sets-bd}, and \ref{lem:two-level-bd}, the bound is completed in Proposition~\ref{prop:sigma-dag-bound}.

\begin{lemma}
\label{lem:norm-equiv}
Suppose that
$$
\min_{u \in \cH : \|u\|_\infty = 1} \|\hat \Sigma u\|_\infty > 0.
$$
Then we have
$$
\|\hat \Sigma^\dag \|_\infty = \left( \min_{u \in \cH : \|u\|_\infty = 1} \|\hat \Sigma u\|_\infty \right)^{-1} .
$$
\end{lemma}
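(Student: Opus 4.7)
The plan is to derive the identity via a simple change of variables, using that $\hat\Sigma$ restricted to $\cH$ is invertible (with inverse $\hat\Sigma^\dag$) under the hypothesis that the minimum on the right-hand side is strictly positive.

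First, I would argue that the map $\hat\Sigma : \cH \to \cH$ is bijective. If $u \in \cH$ satisfies $\hat\Sigma u = 0$ and $u \neq 0$, then setting $\tilde u = u / \|u\|_\infty$ gives a vector in $\cH$ with $\|\tilde u\|_\infty = 1$ and $\|\hat\Sigma \tilde u\|_\infty = 0$, contradicting the hypothesis. Hence $\hat\Sigma$ is injective on the finite-dimensional space $\cH$, and therefore bijective. Consequently $\hat\Sigma^\dag : \cH \to \cH$ is precisely the two-sided inverse of $\hat\Sigma$ on $\cH$.

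Next, I would rewrite both sides using the substitution $v = \hat\Sigma u$, equivalently $u = \hat\Sigma^\dag v$, which by bijectivity sets up a one-to-one correspondence between nonzero elements of $\cH$. By homogeneity,
\begin{equation*}
\|\hat\Sigma^\dag\|_\infty
= \max_{v \in \cH,\, v \neq 0} \frac{\|\hat\Sigma^\dag v\|_\infty}{\|v\|_\infty}
= \max_{u \in \cH,\, u \neq 0} \frac{\|u\|_\infty}{\|\hat\Sigma u\|_\infty}
= \Bigl( \min_{u \in \cH,\, u \neq 0} \frac{\|\hat\Sigma u\|_\infty}{\|u\|_\infty} \Bigr)^{-1}
= \Bigl( \min_{u \in \cH,\, \|u\|_\infty = 1} \|\hat\Sigma u\|_\infty \Bigr)^{-1}.
\end{equation*}
The last equality again uses homogeneity to restrict to the unit sphere; the minimum is attained by continuity of $u \mapsto \|\hat\Sigma u\|_\infty$ on the compact set $\{u \in \cH : \|u\|_\infty = 1\}$, and likewise the maximum in the definition of $\|\hat\Sigma^\dag\|_\infty$ is attained.

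There is no real obstacle here: the only subtlety worth spelling out is that invertibility of $\hat\Sigma$ on $\cH$ is exactly what licenses interpreting $\hat\Sigma^\dag$ as the inverse map (as opposed to the Moore--Penrose pseudoinverse on all of $\R^d$, which kills the $\bone$-direction), so the substitution $u \leftrightarrow v$ is genuinely a bijection between the unit $\ell_\infty$-spheres scaled appropriately.
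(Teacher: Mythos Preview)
Your proposal is correct and follows essentially the same approach as the paper: first deduce invertibility of $\hat\Sigma$ on $\cH$ from the positivity hypothesis, then use the bijection $v = \hat\Sigma u$ together with homogeneity to rewrite the operator norm of the inverse as the reciprocal of the minimum. Your version is slightly more explicit about compactness and attainment, but the argument is the same.
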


\begin{proof}
By the assumption, there is no vector $u \in \cH$ such that $\hat \Sigma u = 0$, so $\hat \Sigma$ is invertible on $\cH$.
Then we have
$$
\| \hat \Sigma^\dag  \|_\infty
= \max_{v \in \cH : \|v\|_\infty = 1} \|\hat \Sigma^\dag v\|_\infty
= \max_{u \in \cH : \|\hat \Sigma u\|_\infty = 1} \|u\|_\infty
= \max_{u \in \cH : \|\hat \Sigma u\|_\infty > 0} \frac{\|u\|_\infty}{\|\hat \Sigma u\|_\infty}
= \max_{u \in \cH : \|u\|_\infty = 1} \frac{1}{\|\hat \Sigma u\|_\infty}
$$
which yields the desired result.
\end{proof}

\begin{comment}
Let us define
\[s :=  \|\hat \Sigma^\dag\|_\infty, \quad t  :=  \min_{\substack{\|u\|_\infty = 1, \\ u\perp \mathbf{1}}}\|\hat \Sigma u\|_\infty.\]
It follows from our earlier observation that $t > 0$.
We claim that $s = \frac{1}{t}$.
Since $t > 0$, $\frac{1}{t}$ is well defined.
Consider the following
\begin{align*}
s &= \max_{\|u\|_\infty = 1} \|\hat \Sigma^\dag u\|_\infty \\
&= \max_{\|\hat \Sigma v\|_\infty = 1} \|v\|_\infty \\
&= \max_{\|\hat \Sigma v\|_\infty > 0} \frac{\|v\|_\infty}{\|\hat \Sigma v\|_\infty} \\
&= \max_{\substack{\|v\|_\infty = 1, \\ \|\hat \Sigma v\|_\infty > 0}} \frac{1}{\|\hat \Sigma v\|_\infty} \\
&= \frac{1}{\min\limits_{\substack{\|v\|_\infty = 1, \\ v\perp \mathbf{1}}}\|\hat \Sigma v\|_\infty} \\
&= \frac{1}{t}.
\end{align*}
It remains to provide a lower bound on $t$.
\end{comment}

\begin{lemma}\label{lem:u-infty-bd}
There exists an absolute constant $C > 0$ such that the following holds for any fixed $\delta \in (0,0.1)$, $\kappa \in (0,1]$, and $u \in \cH$ with $\|u\|_\infty = 1$.
Define $I := \{i \in [d] \,:\, u_i \geq \kappa\}.$
Suppose that $N |I| \ge d \log \frac 1\delta$.
With probability at least $1 - \delta$, we have
$$
\|\hat \Sigma u\|_\infty \ge \kappa - C \sqrt{ \frac{d}{N |I|} \log \frac{1}{\delta} } .
$$
\end{lemma}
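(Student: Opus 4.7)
The plan is to exploit the identity $\E[\hat \Sigma] u = u$ for $u \in \cH$ and to concentrate a suitable scalar linear functional of $\hat\Sigma u$ rather than the full vector. Since $u_i \ge \kappa > 0$ for every $i \in I$, the natural choice of test vector is the indicator $v := \sum_{i \in I} e_i \in \R^d$. The first observation is the elementary chain
\begin{align*}
\|\hat \Sigma u\|_\infty
\;\ge\; \max_{i \in I} (\hat \Sigma u)_i
\;\ge\; \frac{1}{|I|} \sum_{i \in I} (\hat \Sigma u)_i
\;=\; \frac{v^\top \hat \Sigma u}{|I|},
\end{align*}
so it suffices to prove a one-dimensional lower bound for the scalar $v^\top \hat \Sigma u$.

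For the expectation, the formula $\E[\hat \Sigma] = I - \frac{1}{d}\bone\bone^\top$ together with $\bone^\top u = 0$ gives
\[
\E[v^\top \hat \Sigma u] \;=\; v^\top u \;=\; \sum_{i \in I} u_i \;\ge\; |I|\kappa .
\]
For the concentration, the plan is to write
\[
v^\top \hat \Sigma u = \frac{d-1}{2N}\sum_{r=1}^N Y_r, \qquad Y_r := (x_r^\top v)(x_r^\top u),
\]
and apply Bernstein's inequality to the i.i.d.\ sum $\sum_r Y_r$. Since $v \in \{0,1\}^d$ and $x_r$ is a pairwise difference, $|x_r^\top v| \le 1$; since $\|u\|_\infty = 1$, $|x_r^\top u| \le 2$. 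Hence $|Y_r| \le 2$ deterministically. For the variance, observe that $(x_r^\top v)^2 \ne 0$ only when exactly one of $\{i_r, j_r\}$ lies in $I$, which occurs with probability at most $2|I|/d$; on that event $(x_r^\top u)^2 \le 4$, so $\E[Y_r^2] \le 8|I|/d$.

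Bernstein's inequality then yields, with probability at least $1-\delta$,
\[
\Big|\sum_{r=1}^N Y_r - \E\sum_{r=1}^N Y_r\Big| \;\le\; C'\sqrt{\frac{N|I|}{d}\log\frac{1}{\delta}} + C' \log\frac{1}{\delta} .
\]
Multiplying by $\frac{d-1}{2N}$ and dividing by $|I|$, and using the hypothesis $N|I| \ge d\log(1/\delta)$ to absorb the linear term into the square-root term, the error drops to order $\sqrt{\tfrac{d}{N|I|}\log\tfrac{1}{\delta}}$, and combining this with the expectation lower bound delivers the claim. The argument is a single-vector concentration, so the only delicate point is the variance bookkeeping for $Y_r$; the genuine difficulty in the rest of Section~\ref{sec:sample-covariance-proof} lies in removing the fixed-$u$ restriction and taking a union bound over the unit $\ell_\infty$ ball in $\cH$, which (as sketched in Section~\ref{sec:norm-inv-cov-discus}) requires decomposing that ball according to the sizes of the entries of $u$.
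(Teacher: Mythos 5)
Your proof is correct and is essentially the paper's own argument: both lower-bound $\|\hat\Sigma u\|_\infty$ by the average $\frac{1}{|I|}\sum_{i\in I}(\hat\Sigma u)_i$ (your $v^\top\hat\Sigma u/|I|$), compute the mean $\sum_{i\in I}u_i\ge\kappa|I|$ via $\bone^\top u=0$, and apply Bernstein with the same boundedness and variance estimates (your $Y_r$ is the paper's $X_r$ up to the factor $\tfrac{d-1}{2}$). No gaps.
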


\begin{proof}
For any $i \in [d]$, we have
$$
(\hat \Sigma u)_i
= \sum_{j = 1}^d \hat \Sigma_{ij} u_j
= \sum_{j = 1}^d \hat \Sigma_{ij} (u_j - u_i) ,
$$
since $\sum_{j=1}^d \hat \Sigma_{ij} = (\hat \Sigma \bone)_i = 0$.
Then, by the definition of $\hat \Sigma$,
\begin{equation}
(\hat \Sigma u)_i
=  \frac{d-1}{2N} \sum_{j \in [d] \setminus \{i\}} \sum_{r = 1}^N \big(x_r x_r^\top\big)_{ij} (u_j - u_i)
= \frac{d-1}{2N} \sum_{r = 1}^N \sum_{j \in [d] \setminus \{i\}}(u_i - u_j) \bbone\{x_r = \pm (e_i - e_j)\} .
\label{eq:sigma-u-i}
\end{equation}
Then it holds that
\begin{equation}
\|\hat \Sigma u\|_\infty
\geq \frac{1}{|I|}\sum_{i \in I}(\hat \Sigma u)_i
= \frac{d-1}{2N |I|} \sum_{r = 1}^N \sum_{i \in I} \sum_{j \in [d] \setminus \{i\}}(u_i - u_j) \bbone\{x_r = \pm (e_i - e_j)\} .
\label{eq:sigma-u-lower}
\end{equation}

For any $r \in [N]$, define
$$
X_r := \frac{d-1}{2} \sum_{i \in I} \sum_{j \in [d] \setminus \{i\}}(u_i - u_j) \bbone\{x_r = \pm (e_i - e_j)\} .
$$
Then we have
$$
\E[X_r] = \frac{d-1}{2} \sum_{i \in I} \sum_{j \in [d] \setminus \{i\}} (u_i - u_j) \frac{2}{d(d-1)}
= \sum_{i \in I} \frac{d u_i - \sum_{j=1}^d u_j}{d}
= \sum_{i \in I} u_i \in \big[ \kappa \, |I| , |I| \big]
$$
since $\sum_{j=1}^d u_j = 0$ and $\kappa \le u_i \le \|u\|_\infty = 1$ for $i \in I$.
Moreover, it follows that
$$
|X_r - \E[X_r]| \le  |X_r| + \E[X_r] \le d \cdot 2 \|u\|_\infty + |I| \le 3 d
$$
and that
$$
\Var(X_r) \le \E[X_r^2]
= \frac{(d-1)^2}{4} \sum_{i \in I} \sum_{j \in [d] \setminus \{i\}} (u_i - u_j)^2 \frac{2}{d(d-1)}
\le 2 d \, |I|.
$$
Since $X_1, \dots, X_N$ are independent, Bernstein's inequality together with \eqref{eq:sigma-u-lower} implies that
$$
\|\hat \Sigma u\|_\infty \ge \frac{1}{N|I|} \sum_{r=1}^N X_r \ge \kappa - C_1 \bigg( \sqrt{ \frac{d}{N |I|} \log \frac{1}{\delta} } + \frac{d}{N |I|} \log \frac{1}{\delta} \bigg)
$$
with probability at least $1 - \delta$ for an absolute constant $C_1>0$.
This completes the proof since $N |I| \ge d \log \frac 1\delta$ by assumption.
\end{proof}

\begin{lemma}
\label{lem:sigma-infty-bd}
There exists an absolute constant $C > 0$ such that the following holds for any fixed $\delta \in (0,0.1)$ and $\alpha, \kappa \in (0,1]$.
Suppose that $N \ge C \frac{d}{\alpha \kappa^2} \log \frac{2}{\kappa \delta}$.
Define
$$
\cU := \big\{ u \in \cH : \|u\|_\infty = 1, \, |\{i \in [d] \,:\, u_i \geq \kappa\}| \ge \alpha d \big\}.
$$
With probability at least $1 - \delta$, we have
$$
\min_{u \in \cU} \|\hat \Sigma u\|_\infty \ge \kappa/2 .
$$
\end{lemma}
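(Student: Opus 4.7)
The natural strategy is a covering argument: apply the pointwise bound of Lemma~\ref{lem:u-infty-bd} to a finite $\ell_\infty$-net of $\cU$, then transfer the bound to all of $\cU$ using the Lipschitzness of $\hat\Sigma$ with respect to the $\ell_\infty$ norm.

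First, I would fix a granularity $\eta = \kappa/c_0$ for a sufficiently large universal constant $c_0$ (say $c_0 = 32$) and build an $\ell_\infty$-net $\cN \subset \cH$ of $\cH \cap [-1,1]^d \supset \cU$ with $|\cN| \le (C/\eta)^d$. Concretely one can grid $[-1,1]^d$ with spacing $\eta/2$ and project each grid point onto $\cH$ via $v \mapsto v - \tfrac{1}{d}(\mathbf{1}^\top v)\mathbf{1}$; since any $u \in \cU$ is $\eta/2$-close to some grid point whose projection is itself $\eta/2$-close to $u$, every $u \in \cU$ admits a $v \in \cN$ with $\|u-v\|_\infty \le \eta$ and $\|v\|_\infty \le 2$.

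Second, for any $u \in \cU$ with closest net point $v$, the set $I = I(u) := \{i : u_i \ge \kappa\}$ has size at least $\alpha d$ and satisfies $v_i \ge \kappa - \eta$ for every $i \in I$. I would then apply to $v$ a mildly generalized version of Lemma~\ref{lem:u-infty-bd}: the proof of that lemma used $\|u\|_\infty \le 1$ only to bound $|X_r - \mathbb{E}[X_r]|$ and $\mathsf{Var}(X_r)$, so allowing $\|v\|_\infty \le 2$ merely rescales the Bernstein tail by a constant. This gives, for each fixed net point and each fixed admissible $I$ of size $\ge \alpha d$ with $v_i \ge \kappa - \eta$ on $I$,
\[
\|\hat\Sigma v\|_\infty \;\ge\; (\kappa - \eta) - C_1\sqrt{\frac{d}{N|I|}\log\frac{1}{\delta'}}
\]
with probability $\ge 1 - \delta'$. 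Taking $\delta' = \delta/|\cN|$ and union-bounding over $v \in \cN$ makes the logarithmic factor $d\log(1/\eta) + \log(1/\delta)$, so with probability $\ge 1 - \delta$ the bound holds simultaneously for every $v \in \cN$.

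Third, to extend from $\cN$ to $\cU$, I use
\[
\|\hat\Sigma u\|_\infty \;\ge\; \|\hat\Sigma v\|_\infty \,-\, \|\hat\Sigma\|_{\infty\to\infty}\cdot\|u-v\|_\infty,
\]
and bound the $\ell_\infty\to\ell_\infty$ operator norm of $\hat\Sigma$ as $\|\hat\Sigma\|_{\infty\to\infty} = \max_i \sum_j |\hat\Sigma_{ij}| = \max_i \frac{d-1}{N}|\cR_i| \le 3$ with high probability via the $|\cR_i|$ bound in Lemma~\ref{lem:high-prob-bds}. Combining the three estimates and using the hypothesis $N \ge Cd\log(2/(\kappa\delta))/(\alpha\kappa^2)$ to make the Bernstein fluctuation at most $\kappa/8$, and $c_0$ large so that $\eta$ and $\|\hat\Sigma\|_{\infty\to\infty}\eta$ together contribute at most $\kappa/4$, yields $\|\hat\Sigma u\|_\infty \ge \kappa/2$ uniformly over $u \in \cU$.

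The main obstacle is the tension in choosing $\eta$: to absorb the Lipschitz loss $\|\hat\Sigma\|_{\infty\to\infty}\eta$ into $\kappa/2$, we need $\eta = \Theta(\kappa)$, but this forces a $d\log(1/\kappa)$ factor in the union-bound logarithm, which is precisely what dictates the scaling $N \gtrsim d\log(2/(\kappa\delta))/(\alpha\kappa^2)$ appearing in the hypothesis.
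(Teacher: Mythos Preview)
Your proposal is correct and matches the paper's argument: an $\ell_\infty$-net of mesh $\Theta(\kappa)$, a union bound applying Lemma~\ref{lem:u-infty-bd} to each net point, and a Lipschitz transfer using $\|\hat\Sigma\|_{\infty\to\infty}\le 3$ from Lemma~\ref{lem:high-prob-bds}. The only cosmetic difference is that the paper takes the net inside $\cU$ itself (so net points already satisfy $\|v\|_\infty=1$ and $|\{i:v_i\ge\kappa\}|\ge\alpha d$, avoiding your projection step and the $\|v\|_\infty\le 2$ generalization), but the bookkeeping and resulting sample-size condition are identical.
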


\begin{proof}
For $\eps \in (0, 1)$, the set
$$
\cN := \{u \in \cU : u_i = 0, \pm \eps, \pm 2 \eps, \dots, \pm \lfloor 1/\eps \rfloor \eps \text{ for all } i \in [d] \}
$$
is clearly an $\eps$-net of $\cU$ in the $\ell_\infty$ norm, and it has cardinality $|\cN| \le (2/\eps)^d$.

For any $u  \in \cU$, let $v \in \cN$ be such that $\|u - v\|_\infty \le \eps$.
Then we have
\begin{equation}
\|\hat \Sigma v\|_\infty = \|\hat \Sigma(v-u + u)\|_\infty \leq \|\hat \Sigma\|_\infty\|v-u\|_\infty + \|\hat \Sigma u\|_\infty \leq \eps\|\hat \Sigma\|_\infty + \|\hat \Sigma u\|_\infty .
\label{eq:net-arg}
\end{equation}
By the definition of $\hat \Sigma$ and the fact that $\sum_{j=1}^d \hat \Sigma_{ij} = 0$ for any $i \in [d]$, we obtain
\begin{align*}
\|\hat \Sigma\|_\infty
= \max_{i \in [d]}\sum_{j = 1}^d|\hat{\Sigma}_{ij}|
= 2 \max_{i \in [d]} \sum_{j \in [d] \setminus \{i\}} |\hat{\Sigma}_{ij}|
&= \frac{d-1}{N} \max_{i \in [d]} \sum_{j \in [d] \setminus \{i\}} \bigg| \sum_{r=1}^N (x_r x_r^\top)_{ij} \bigg| \\
&= \frac{d-1}{N} \max_{i \in [d]} \sum_{j \in [d] \setminus \{i\}} \sum_{r=1}^N \bbone\{x_r = \pm(e_i - e_j)\} .
\end{align*}
It then follows from the definition of $\cR_i$ in \eqref{eq:def-ri} and Lemma \ref{lem:high-prob-bds} that
$$
\|\hat \Sigma\|_\infty
= \frac{d-1}{N} \max_{i \in [d]} |\mathcal{R}_i| \leq 3
$$
with probability at least $1 - \delta/2$.
Combining this bound with \eqref{eq:net-arg} gives
$$
\min_{u \in \cU} \|\hat \Sigma u\|_\infty
\ge \min_{v \in \cN} \|\hat \Sigma v\|_\infty - 3 \eps .
$$

Recall that $|\{i \in [d] \,:\, v_i \geq \kappa\}| \ge \alpha d$ for every $v \in \cN \subset \cU$.
Let $\eps = \kappa/10$.
We apply Lemma~\ref{lem:u-infty-bd} with $\delta$ replaced by $\frac{\delta}{2(2/\eps)^d}$ and a union bound over all $v \in \cN$ to obtain that, with probability at least $1 - \delta/2$,
$$
\min_{v \in \cN} \|\hat \Sigma v\|_\infty \ge \kappa - C_1 \sqrt{ \frac{d}{N \alpha d} \log \frac{2(2/\eps)^d}{\delta} }
\ge \kappa - C_2 \sqrt{ \frac{d}{N \alpha} \log \frac{2}{\kappa \delta} }
$$
for absolute constants $C_1, C_2 > 0$.

In view of the above two displays together with the condition $N \ge C \frac{d}{\alpha \kappa^2} \log \frac{2}{\kappa \delta}$ for a large constant $C > 0$, we conclude that $\min_{u \in \cU} \|\hat \Sigma u\|_\infty \ge \kappa/2$ with probability at least $1 - \delta$.
\end{proof}

\begin{lemma}
\label{lem:fixed-sets-bd}
There exists an absolute constant $C > 0$ such that the following holds for any fixed $\delta \in (0,0.1)$, $0 \le \lambda < \kappa \le 1$, and nonempty subsets $I \subset J \subset [d]$.
With probability at least $1 - \delta$, we have that
$$
\|\hat \Sigma u\|_\infty
\ge (\kappa - \lambda) \frac{d-|J|}{d} - \frac{|I|}{d} - C \bigg( \sqrt{ \frac{d}{N|I|} \log \frac{1}{\delta} } + \frac{d}{N |I|} \log \frac{1}{\delta} \bigg)
$$
for all $u \in \cH$ with $\|u\|_\infty = 1$, $\{i \in [d] : u_i \ge \kappa\} = I$, and $\{i \in [d] : u_i \ge \lambda\} = J$.
\end{lemma}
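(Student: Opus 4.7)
The plan is to reduce the uniform-in-$u$ lower bound to the concentration of a single scalar binomial count, by exploiting an algebraic cancellation among the coordinates in $I$. First, I would note that $\|\hat \Sigma u\|_\infty \ge \frac{1}{|I|}\sum_{i \in I}(\hat \Sigma u)_i$ because the $\ell_\infty$ norm dominates any convex combination of entries, so it suffices to lower bound this average uniformly over all admissible $u$.

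Starting from the identity $(\hat \Sigma u)_i = \frac{d-1}{2N}\sum_{j \ne i} N_{ij}(u_i - u_j)$ derived in the proof of Lemma~\ref{lem:u-infty-bd}, where $N_{ij} := \sum_{r} \bbone\{x_r = \pm(e_i - e_j)\}$, I would sum over $i \in I$ and split the inner sum according to whether $j \in I\setminus\{i\}$, $j \in J\setminus I$, or $j \in [d] \setminus J$. Three uniform-in-$u$ structural facts then apply: (i) the $I \times I$ contribution vanishes identically since $N_{ij}(u_i - u_j) + N_{ji}(u_j - u_i) = 0$ by the symmetry $N_{ij} = N_{ji}$; (ii) for $i \in I,\, j \in J \setminus I$, the hypothesis $u_i \ge \kappa > u_j$ gives $u_i - u_j \ge 0$; and (iii) for $i \in I,\, j \notin J$, the hypotheses $u_i \ge \kappa$ and $u_j < \lambda$ give $u_i - u_j \ge \kappa - \lambda$. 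Combining these yields
$$
\sum_{i \in I}(\hat \Sigma u)_i \ge \frac{d-1}{2N}(\kappa - \lambda)\, M_{I, \bar J}, \qquad M_{I, \bar J} := \sum_{r=1}^N \bbone\{x_r = \pm(e_a - e_b) \text{ for some } a \in I,\, b \in [d] \setminus J\},
$$
uniformly in $u$ with the prescribed super-level sets.

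Since $M_{I, \bar J} \sim \Bin\big(N,\, 2|I|(d-|J|)/(d(d-1))\big)$ is a single scalar random variable depending only on the covariates, a standard Bernstein lower tail gives $M_{I, \bar J} \ge \E[M_{I, \bar J}] - C \sqrt{\E[M_{I, \bar J}]\log(1/\delta)} - C \log(1/\delta)$ with probability at least $1 - \delta$. Substituting $\E[M_{I, \bar J}] = 2N|I|(d-|J|)/(d(d-1))$, dividing by $|I|$, and using $(\kappa-\lambda) \le 1$ and $(d-|J|)/d \le 1$ to simplify constants gives
$$
\|\hat \Sigma u\|_\infty \ge (\kappa - \lambda)\frac{d - |J|}{d} - C\sqrt{\frac{d}{N|I|}\log\frac{1}{\delta}} - C\frac{d}{N|I|}\log\frac{1}{\delta},
$$
which is slightly stronger than the stated claim (the extra $-|I|/d$ slack in the statement is harmless). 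The main hurdle this argument overcomes is the uniformity in $u$: a naive $\epsilon$-net over the infinite family of admissible $u$'s would pay a $d \log(1/\epsilon)$ covering-entropy penalty in any union bound, overwhelming the target rate $\sqrt{d/(N|I|)}$. The algebraic cancellation of the $I \times I$ cross terms is what strips the residual $u$-dependence so that uniform control collapses to the concentration of the single binomial $M_{I,\bar J}$.
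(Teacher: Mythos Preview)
Your argument is correct and actually a bit cleaner than the paper's. Both proofs start from $\|\hat\Sigma u\|_\infty \ge \frac{1}{|I|}\sum_{i\in I}(\hat\Sigma u)_i$ and split the inner sum over $j$ according to membership in $I$, $J\setminus I$, and $[d]\setminus J$; both handle the $J\setminus I$ and $[d]\setminus J$ pieces by the sign/gap inequalities $u_i-u_j\ge 0$ and $u_i-u_j\ge\kappa-\lambda$ respectively, thereby stripping all $u$-dependence and reducing to concentration of binomial edge counts indexed only by the fixed sets $I,J$. The difference is in the $I\times I$ block: the paper crudely bounds $u_i-u_j\ge -1$ there and applies a separate Bernstein inequality, which produces the extra $-|I|/d$ term in the lemma statement; you instead observe the exact antisymmetry cancellation $\sum_{i\in I}\sum_{j\in I\setminus\{i\}}N_{ij}(u_i-u_j)=0$ from $N_{ij}=N_{ji}$, eliminating that block entirely and yielding a strictly sharper bound. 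Your route also needs only one Bernstein application (to $M_{I,\bar J}$) rather than two. The downstream use of the lemma (Lemma~\ref{lem:two-level-bd}) assumes $\ell\le(\kappa-\lambda)d/8$ precisely to absorb the $|I|/d$ slack, so your sharpening would relax that condition, though the paper's final sample-complexity conclusions are unaffected.
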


\begin{proof}
By \eqref{eq:sigma-u-i}, we have
\begin{align*}
\|\hat \Sigma u\|_\infty
\geq \frac{1}{|I|}\sum_{i \in I} (\hat \Sigma u)_i
&=  \frac{d-1}{2N |I|}  \sum_{r = 1}^N \sum_{i \in I} \sum_{j \in [d] \setminus \{i\}}(u_i - u_j) \bbone\{x_r = \pm (e_i - e_j)\} \notag \\
&= \frac{d-1}{2N |I|}  \sum_{r = 1}^N \bigg( \sum_{i \in I} \sum_{j \in I \setminus \{i\}} (u_i - u_j) \bbone\{x_r = \pm (e_i - e_j)\} \notag \\
& \qquad \qquad \qquad + \sum_{i \in I} \sum_{j \in J \setminus I} (u_i - u_j) \bbone\{x_r = \pm (e_i - e_j)\} \notag \\
& \qquad \qquad \qquad + \sum_{i \in I} \sum_{j \in [d] \setminus J} (u_i - u_j) \bbone\{x_r = \pm (e_i - e_j)\} \bigg) . \notag
\end{align*}
In view of the assumptions $\|u\|_\infty = 1$, $\{i \in [d] : u_i \ge \kappa\} = I$, and $\{i \in [d] : u_i \ge \lambda\} = J$, we see that for $i \in I$,
$$
u_i - u_j \ge
\begin{cases}
-1 & \text{ if } j \in I \setminus \{i\} ,\\
0 & \text{ if } j \in J \setminus I ,\\
\kappa - \lambda & \text{ if } j \in [d] \setminus J.
\end{cases}
$$
It then follows that
\begin{align}
\|\hat \Sigma u\|_\infty
&\ge - \frac{1}{N |I|} \sum_{r = 1}^N \frac{d-1}{2}  \sum_{i \in I} \sum_{j \in I \setminus \{i\}} \bbone\{x_r = \pm (e_i - e_j)\} \label{eq:sum-rij-1} \\
& \quad + \frac{\kappa - \lambda}{N |I|} \sum_{r = 1}^N \frac{d-1}{2}  \sum_{i \in I} \sum_{j \in [d] \setminus J}  \bbone\{x_r = \pm (e_i - e_j)\}. \label{eq:sum-rij-2}
\end{align}

We now bound the above two terms.
To control \eqref{eq:sum-rij-1}, we define
$$
X_r := \frac{d-1}{2} \sum_{i \in I} \sum_{j \in I \setminus \{i\}} \bbone\{x_r = \pm (e_i - e_j)\}
$$
for $r \in [N]$.
Then we have
$$
\E[X_r] = \frac{d-1}{2} \sum_{i \in I} \sum_{j \in I \setminus \{i\}} \frac{2}{d(d-1)}
= \frac{|I| (|I| - 1)}{d}
\le \frac{|I|^2}{d} , \qquad
|X_r - \E[X_r]| \le d ,
$$
and
$$
\Var(X_r) \le \E[X_r^2]
= \frac{(d-1)^2}{4} \sum_{i \in I} \sum_{j \in I \setminus \{i\}} \frac{2}{d(d-1)}
\le \frac{|I|^2}{2}.
$$
Since $X_1, \dots, X_N$ are independent, Bernstein's inequality implies that
$$
\sum_{r=1}^N X_r \le N \frac{|I|^2}{d} + C_1 \bigg( \sqrt{ N |I|^2 \log \frac{1}{\delta} } + d \log \frac{1}{\delta} \bigg)
$$
with probability at least $1 - \delta/2$ for an absolute constant $C_1>0$ and any $\delta \in (0,0.1)$.
Consequently, the term \eqref{eq:sum-rij-1} can be bounded as
$$
\frac{1}{N |I|} \sum_{r = 1}^N \frac{d-1}{2}  \sum_{i \in I} \sum_{j \in I \setminus \{i\}} \bbone\{x_r = \pm (e_i - e_j)\}
\le \frac{|I|}{d} + C_1 \bigg( \sqrt{ \frac 1N \log \frac{1}{\delta} } + \frac{d}{N |I|} \log \frac{1}{\delta} \bigg) .
$$

Analogously, if we define
$$
X'_r := \frac{d-1}{2}  \sum_{i \in I} \sum_{j \in [d] \setminus J}  \bbone\{x_r = \pm (e_i - e_j)\} ,
$$
we can apply Bernstein's inequality to obtain that
$$
\sum_{r=1}^N X'_r \ge N \frac{|I| (d-|J|)}{d} - C_2 \bigg( \sqrt{ N |I| (d-|J|) \log \frac{1}{\delta} } + d \log \frac{1}{\delta} \bigg)
$$
with probability at least $1 - \delta/2$ for an absolute constant $C_2>0$ and any $\delta \in (0,0.1)$.
Then the term \eqref{eq:sum-rij-2} can be bounded as
$$
\frac{\kappa - \lambda}{N |I|} \sum_{r = 1}^N \frac{d-1}{2}  \sum_{i \in I} \sum_{j \in [d] \setminus J}  \bbone\{x_r = \pm (e_i - e_j)\}
\ge (\kappa - \lambda) \frac{d-|J|}{d} - C_2 \bigg( \sqrt{ \frac{d}{N|I|} \log \frac{1}{\delta} } + \frac{d}{N |I|} \log \frac{1}{\delta} \bigg) .
$$

Plugging the above bounds into \eqref{eq:sum-rij-1} and \eqref{eq:sum-rij-2} respectively completes the proof.
\end{proof}

\begin{lemma}
\label{lem:two-level-bd}
There exists an absolute constant $C > 0$ such that the following holds for any fixed $\delta \in (0,0.1)$ and $0 \le \lambda < \kappa \le 1$.
For $u \in \R^d$, define
$$
\ell(u) := |\{i \in [d] : u_i \ge \kappa\}|, \qquad
m(u) := |\{i \in [d] : u_i \ge \lambda\}| .
$$
With probability at least $1 - \delta$, we have that
$$
\|\hat \Sigma u\|_\infty
\ge \frac{\kappa - \lambda}{2}
$$
for all $u \in \cH$ with $\|u\|_\infty = 1$, $m(u) \le \frac d4$, $\ell(u) \le (\kappa - \lambda) \frac d8$, and $N \ge \frac{C d \, m(u)}{(\kappa - \lambda)^2 \ell(u)} \log \frac{d}{\delta}$.
\end{lemma}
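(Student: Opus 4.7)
The plan is to reduce Lemma~\ref{lem:two-level-bd} to Lemma~\ref{lem:fixed-sets-bd} by union-bounding over the possible pairs of ``level sets'' $I := \{i : u_i \ge \kappa\}$ and $J := \{i : u_i \ge \lambda\}$ of $u$. For each pair of integers $1 \le \ell \le m \le d$ and each pair of subsets $I \subset J \subset [d]$ with $|I| = \ell$ and $|J| = m$, I would invoke Lemma~\ref{lem:fixed-sets-bd} with confidence parameter $\delta' := \delta / \bigl(d^{2}\binom{d}{\ell}\binom{d}{m}\bigr)$, and then take a union bound over the at most $\binom{d}{\ell}\binom{d}{m}$ pairs $(I,J)$ and the at most $d^{2}$ pairs $(\ell,m)$. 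This produces a single event of probability at least $1-\delta$ on which, for every admissible $u$,
$$
\|\hat\Sigma u\|_\infty \;\ge\; (\kappa-\lambda)\,\frac{d-m(u)}{d} \;-\; \frac{\ell(u)}{d} \;-\; C\bigg(\sqrt{\tfrac{d\,\log(1/\delta')}{N\,\ell(u)}} \;+\; \tfrac{d\,\log(1/\delta')}{N\,\ell(u)}\bigg).
$$
The standard estimate $\log\binom{d}{k} \le k\log(ed/k)$ together with $\ell \le m$ shows that $\log(1/\delta') \lesssim m(u)\,\log(d/\delta)$.

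Next, I would verify on that event that the right-hand side exceeds $(\kappa-\lambda)/2$ under the hypotheses on $u$. The condition $m(u) \le d/4$ makes the leading term at least $3(\kappa-\lambda)/4$; the condition $\ell(u) \le (\kappa-\lambda)d/8$ makes the subtractive term $\ell(u)/d$ at most $(\kappa-\lambda)/8$; and the sample-size hypothesis $N\,\ell(u) \gtrsim d\,m(u)\,\log(d/\delta)/(\kappa-\lambda)^{2}$ forces each of the two remainder pieces to be at most $(\kappa-\lambda)/16$ (the quadratic-in-error version of the square-root term). Summing these contributions yields the desired bound $(\kappa-\lambda)/2$.

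The main subtlety I anticipate is that the sample-size threshold itself depends on $u$ through the ratio $m(u)/\ell(u)$, so some care is needed to ensure the union-bound cost $\log\binom{d}{\ell}\binom{d}{m}$ can be absorbed into the $dm/\ell$ scaling of the hypothesis on $N$. The key observation is that $\log\binom{d}{m}$ grows only linearly in $m$, precisely matching the factor of $m(u)$ that appears on the right-hand side of the assumed lower bound on $N$; this is exactly what lets the bookkeeping close, and it is the reason the hypothesis is stated with a factor of $m/\ell$ rather than, say, $\max(m,\ell)/\ell$. Beyond this careful accounting, no new probabilistic ingredients beyond Lemma~\ref{lem:fixed-sets-bd} are required.
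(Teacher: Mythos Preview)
Your proposal is correct and follows essentially the same approach as the paper: reduce to Lemma~\ref{lem:fixed-sets-bd} via a union bound over all nested pairs $(I,J)$ of level sets, absorb the combinatorial cost $\log\binom{d}{\ell}\binom{d}{m} \lesssim m\log d$ into the factor $m(u)$ already present in the sample-size hypothesis, and then verify the arithmetic. The paper uses the slightly tighter count $\binom{d}{m}\binom{m}{\ell} \le d^m$ in place of your $\binom{d}{\ell}\binom{d}{m}$, but both yield $\log(1/\delta') \lesssim m\log(d/\delta)$, so the difference is immaterial.
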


\begin{proof}
For fixed positive integers $\ell \le m \le d$, there are at most $\binom{d}{m} \binom{m}{\ell} \le d^m$ pairs of subsets $(I,J)$ such that $|I| = \ell$, $|J| = m$, and $I \subset J \subset [d]$.
Therefore, by Lemma~\ref{lem:fixed-sets-bd} with $\delta$ replaced by $\delta/d^m$ and a union bound over all possible $(I,J)$, we obtain the following: With probability at least $1 - \delta$,
\begin{equation}
\|\hat \Sigma u\|_\infty
\ge (\kappa - \lambda) \frac{d-m}{d} - \frac{\ell}{d} - C_1 \bigg( \sqrt{ \frac{dm}{N \ell} \log \frac{d}{\delta} } + \frac{dm}{N \ell} \log \frac{d}{\delta} \bigg)
\label{eq:sim-ineq}
\end{equation}
for all $u \in \cH$ with $\|u\|_\infty = 1$, $\ell(u) = \ell$, and $m(u) = m$, where $C_1>0$ is an absolute constant, and $\ell(u)$ and $m(u)$ are defined as in the lemma.
Moreover, there are at most $d^2$ possible choices for the pair of integers $(\ell, m)$.
Thus, by a further union bound over $(\ell, m)$, the bound \eqref{eq:sim-ineq} holds for all $u \in \cH$ with $\|u\|_\infty = 1$, provided that the constant $C_1$ is replaced by a larger absolute constant.
The conclusion then follows from the imposed conditions $m \le \frac d4$, $\ell \le (\kappa - \lambda) \frac d8$, and $N \ge \frac{C d \, m}{(\kappa - \lambda)^2 \ell} \log \frac{d}{\delta}$.
\end{proof}

\begin{proposition}
\label{prop:sigma-dag-bound}
There exists an absolute constant $C>0$ such that the following holds for any $\delta \in (0,0.1)$.
Fix an integer $L \ge 2$.
Suppose that
$$
N \ge C \max \Big\{ L^2 d^{\frac{L}{L-1}} \log \frac{dL}{\delta} , L^3 d \log \frac{L}{\delta} \Big\} .
$$
Then it holds with probability at least $1-\delta$ that,
$$
\min_{u \in \cH : \|u\|_\infty = 1} \|\hat \Sigma u\|_\infty \ge \frac{1}{2L}
$$
and, as a result,
$$
\| \hat \Sigma^\dag \|_\infty \le 2 L.
$$
\end{proposition}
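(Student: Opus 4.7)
The plan is to combine Lemmas~\ref{lem:sigma-infty-bd} and~\ref{lem:two-level-bd} by a case analysis on the superlevel-set profile of $u$. Introduce thresholds $\kappa_k := 1 - k/L$ for $k = 0, 1, \ldots, L-1$, and set $I_k(u) := \{i \in [d] : u_i \ge \kappa_k\}$ with cardinalities $m_k(u) := |I_k(u)|$. Because $\|\hat{\Sigma}u\|_\infty = \|\hat{\Sigma}(-u)\|_\infty$, I may replace $u$ by $-u$ if necessary to assume $\max_i u_i = 1$, so that $1 \le m_0(u) \le m_1(u) \le \cdots \le m_{L-1}(u) \le d$.

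\textbf{Case A (bulk is large).} If $m_{L-1}(u) \ge d/(8L)$, then a fraction $1/(8L)$ of the entries of $u$ exceed $\kappa_{L-1} = 1/L$. Applying Lemma~\ref{lem:sigma-infty-bd} with $\kappa = 1/L$, $\alpha = 1/(8L)$, and failure probability $\delta/2$ yields $\|\hat{\Sigma}u\|_\infty \ge \kappa/2 = 1/(2L)$ as soon as $N \gtrsim L^3 d \log(L/\delta)$.

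\textbf{Case B (tip is sparse).} If instead $m_{L-1}(u) < d/(8L)$, exploit the telescoping identity $\prod_{k=0}^{L-2}(m_{k+1}(u)/m_k(u)) = m_{L-1}(u)/m_0(u) \le d$. Pigeonhole then produces some index $k^\star(u) \in \{0, 1, \ldots, L-2\}$ for which $m_{k^\star+1}(u)/m_{k^\star}(u) \le d^{1/(L-1)}$. Apply Lemma~\ref{lem:two-level-bd} with $\kappa = \kappa_{k^\star}$ and $\lambda = \kappa_{k^\star+1}$, so that $\kappa - \lambda = 1/L$. The structural hypotheses hold because $m_{k^\star}(u), m_{k^\star+1}(u) \le m_{L-1}(u) < d/(8L)$, which is at most both $d/4$ and $(\kappa - \lambda)d/8$. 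Under the ratio bound, the sample-size hypothesis of that lemma specializes to $N \gtrsim L^2 d^{L/(L-1)}\log(d/\delta)$. A union bound over the $L-1$ possible values of $k^\star$ with failure probability $\delta/(2(L-1))$ only inflates the logarithm to $\log(dL/\delta)$, and yields $\|\hat{\Sigma}u\|_\infty \ge (\kappa - \lambda)/2 = 1/(2L)$.

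Intersecting the Case A and Case B events (total failure probability at most $\delta$), every $u \in \cH$ with $\|u\|_\infty = 1$ satisfies $\|\hat{\Sigma}u\|_\infty \ge 1/(2L)$, and Lemma~\ref{lem:norm-equiv} converts this into $\|\hat{\Sigma}^\dag\|_\infty \le 2L$. The main obstacle is conceptual rather than computational: choosing exactly $L$ threshold levels (hence $L-1$ gaps of height $1/L$ each) is what lets the pigeonhole argument produce the ratio $d^{1/(L-1)}$ and thereby the sharp exponent $L/(L-1)$ in the sample-size condition. A coarser stratification would worsen this exponent, while a finer one would worsen the $(\kappa - \lambda)^{-2} = L^2$ factor in the sample size required by Lemma~\ref{lem:two-level-bd}, so the choice of $L$ equally spaced levels is essentially the right trade-off.
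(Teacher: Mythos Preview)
Your proposal is correct and follows essentially the same approach as the paper: the same $L$ equally spaced thresholds $\kappa_k = 1 - k/L$, the same split into a ``bulk'' case handled by Lemma~\ref{lem:sigma-infty-bd} (the paper's $\cU_L$) and a ``sparse tip'' case handled by Lemma~\ref{lem:two-level-bd} at one well-chosen level (the paper's $\cU_1,\ldots,\cU_{L-1}$), and the same union bound over the $L-1$ levels. The only cosmetic difference is that you locate the good level $k^\star$ directly via pigeonhole on the product $\prod_k m_{k+1}/m_k \le d$, whereas the paper argues by contrapositive (if every ratio is too large then $\ell_{L-1}\ge d$, forcing membership in $\cU_L$); these are two phrasings of the same counting argument.
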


\begin{proof}
By Lemma~\ref{lem:norm-equiv}, it suffices to prove the lower bound on $\|\hat \Sigma u\|_\infty$ for $u \in \cH$ with $\|u\|_\infty = 1$.
For $t = 0, 1, \dots, L$, define
$$
\kappa_t := 1 - t/L, \qquad
\ell_t(u) := |\{i \in [d] : u_i \ge \kappa_t\}|.
$$
For $t = 1, 2, \dots, L-1$, define
$$
\cU_t := \bigg\{ u \in \cH : \|u\|_\infty = 1, \, \ell_t(u) \le \frac d4, \, \ell_{t-1}(u) \le \frac{d}{8L}, \, N \ge \frac{C_1 d L^2 \ell_t(u)}{\ell_{t-1}(u)} \log \frac{dL}{\delta} \bigg\} ,
$$
where $C_1 > 0$ is a constant to be chosen.
In addition, define
$$
\cU_L := \bigg\{ u \in \cH : \|u\|_\infty = 1, \, \ell_{L-1}(u) \ge \frac{d}{8L} \bigg\} .
$$
We claim that
\begin{equation}
\bigcup_{t=1}^{L} \cU_t = \bigg\{ u \in \cH : \|u\|_\infty = 1,  \, \max_{i \in [d]} u_i = 1 \bigg\} .
\label{eq:claim-set-union}
\end{equation}

We first finish the proof based on the above claim.
For $u \in \bigcup_{t=1}^{L-1} \cU_t$, we apply Lemma~\ref{lem:two-level-bd} with $\kappa = \kappa_{t-1}$, $\lambda = \kappa_t$, $\ell(u) = \ell_{t-1}(u)$, $m(u) = \ell_t(u)$, and $\delta$ replaced by $\frac{\delta}{2L}$, together with a union bound over $t \in [L-1]$.
Note that $\kappa - \lambda = 1/L$.
Hence we obtain that, with probability at least $1-\delta/2$,
$$
\min_{u \in \bigcup_{t=1}^{L-1} \cU_t} \|\hat \Sigma u\|_\infty \ge \frac{1}{2L} ,
$$
provided that $C_1$ is a sufficiently large absolute constant.
Moreover, for $u \in \cU_L$, we apply Lemma~\ref{lem:sigma-infty-bd} with $\kappa = K_{L-1} = 1/L$ and $\alpha = 1/(8L)$ to obtain that, with probability at least $1-\delta/2$,
$$
\min_{u \in \cU_L} \|\hat \Sigma u\|_\infty \ge \frac{1}{2L} ,
$$
provided that $N \ge C_2 L^3 d \log \frac{L}{\delta}$ for a large constant $C_2 > 0$.

The above two bounds together with \eqref{eq:claim-set-union} imply that, with probability at least $1 - \delta$, we have $\|\hat \Sigma u\|_\infty \ge 1/(2L)$ for all $u \in \cH$ with $\|u\|_\infty = 1$ and $\max_{i \in [d]} u_i = 1$.
Finally, for $u \in \cH$ with $\|u\|_\infty = 1$ and $\min_{i \in [d]} u_i = -1$, it suffices to apply the bound to $-u$.
\end{proof}

\begin{proof}[Proof of Claim~\eqref{eq:claim-set-union}]
Consider $u \in \cH$ with $\|u\|_\infty = 1$ and $\max_{i \in [d]} u_i = 1$.
Suppose that $u \notin \bigcup_{t=1}^{L-1} \cU_t$.
Then for any $t \in [L-1]$, we have $\ell_t(u) > \frac d4$, $\ell_{t-1}(u) > \frac{d}{8L}$, or $N < \frac{C_1 d L^2 \ell_t(u)}{\ell_{t-1}(u)} \log \frac{dL}{\delta}$.
If either of the first two conditions holds, then we have $\ell_t(u) > \frac{d}{8L}$ for some $t \in \{0, 1, \dots, L-1\}$.
Since $\kappa_{L-1} \le \kappa_t$, by the definition of $\ell_t(u)$, we have $\ell_{L-1}(u) \ge \ell_t(u) > \frac{d}{8L}$.
Thus $u \in \cU_L$.
Let us further suppose that $u \notin \cU_L$.
Then, for all $t \in [L-1]$, we must have
$$
N < \frac{C_1 d L^2 \ell_t(u)}{\ell_{t-1}(u)} \log \frac{dL}{\delta}
\qquad \Longleftrightarrow \qquad
\ell_t(u) > \ell_{t-1}(u) \cdot \frac{N}{C_1 L^2 d \log(dL/\delta)} .
$$
Since $\kappa_0 = 1$ and $\ell_0(u) = |\{i \in [d] : u_i = 1\}| \ge 1$, we apply the above bound iteratively to obtain
$$
\ell_{L-1}(u) \ge \bigg(\frac{N}{C_1 L^2 d \log(dL/\delta)}\bigg)^{L-1} \ge d > \frac{d}{8L} ,
$$
where the second inequality holds by the assumption $N \ge C L^2 d^{\frac{L}{L-1}} \log \frac{dL}{\delta}$.
As a result, $u \in \cU_L$, finishing the proof.
\end{proof}

\subsection{One-step iterate from the ground truth}
\label{sec:one-step-iter}

In this subsection, we study the difference between the ground truth $\theta^*$ and the one-step EM iterate $\hat Q(\theta^*)$ from it. 
All the results in this subsection are conditional on a realization of $x_1, \dots, x_N$. 
For brevity, we often take the liberty of using $\E$ and $\p$ to denote the conditional expectation and probability on $x_1, \dots, x_N$ respectively. 
Let us start with a lemma.

\begin{lemma}
Let $X\sim \mathcal{N}(\mu, \sigma^2)$. 
Then we have 
\label{lem:gauss-tanh}
\begin{equation}
\EE \left[ X\tanh\left(\frac{\mu X}{\sigma^2}\right) \right] = \mu ,
\label{eq:gauss-tanh-expect}
\end{equation}
and 
\begin{equation}
\Var \left( X\tanh\left(\frac{\mu X}{\sigma^2}\right) \right) \le  \sigma^2 .
\label{eq:gauss-tanh-var}
\end{equation}
\end{lemma}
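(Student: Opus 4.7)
The plan is to establish \eqref{eq:gauss-tanh-expect} via a symmetric Gaussian mixture interpretation and then read off \eqref{eq:gauss-tanh-var} as an almost immediate consequence using $|\tanh| \le 1$. This avoids any direct manipulation of the Gaussian density of $X$.

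First, I would introduce an auxiliary Rademacher variable $Z$ uniform on $\{-1,+1\}$, independent of $G \sim \mathcal{N}(0,1)$, and set $\widetilde{X} := Z\mu + \sigma G$, i.e., the symmetric two-component Gaussian mixture with means $\pm\mu$ and common variance $\sigma^2$. The first step is the symmetrization identity
\begin{equation*}
\EE\!\left[ X \tanh\!\Big(\tfrac{\mu X}{\sigma^2}\Big) \right] \;=\; \EE\!\left[ \widetilde{X} \tanh\!\Big(\tfrac{\mu \widetilde{X}}{\sigma^2}\Big) \right],
\end{equation*}
for $X \sim \mathcal{N}(\mu, \sigma^2)$. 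This reduces to checking that the $Z = +1$ and $Z = -1$ conditional expectations on the right coincide: the $Z = +1$ term is already $\EE[X \tanh(\mu X/\sigma^2)]$, while for the $Z = -1$ term, the substitution $G \mapsto -G$ (which preserves $\mathcal{N}(0,1)$) combined with $\tanh(-u) = -\tanh(u)$ produces two cancelling sign flips and recovers the same integral. Next, a Bayes computation---completing the square in the ratio of component densities---gives $\EE[Z \mid \widetilde{X} = x] = \tanh(\mu x / \sigma^2)$. Hence, by the tower property and independence of $Z$ and $G$,
\begin{equation*}
\EE\!\left[ \widetilde{X} \tanh\!\Big(\tfrac{\mu \widetilde{X}}{\sigma^2}\Big) \right]
\;=\; \EE[\widetilde{X}\, Z]
\;=\; \EE[(Z\mu + \sigma G)\, Z]
\;=\; \mu,
\end{equation*}
using $Z^2 = 1$ and $\EE[Z] = 0$, which together with the symmetrization identity proves \eqref{eq:gauss-tanh-expect}.

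Finally, \eqref{eq:gauss-tanh-var} is a one-line consequence. Since $\tanh^2(\cdot) \le 1$,
\begin{equation*}
\Var\!\left( X \tanh\!\Big(\tfrac{\mu X}{\sigma^2}\Big) \right)
\;=\; \EE\!\left[ X^2 \tanh^2\!\Big(\tfrac{\mu X}{\sigma^2}\Big) \right] - \mu^2
\;\le\; \EE[X^2] - \mu^2 \;=\; \sigma^2,
\end{equation*}
where the first equality uses \eqref{eq:gauss-tanh-expect}. The only non-routine step in the whole argument is the symmetrization identity; notably, a Gaussian Poincar\'e approach to the variance bound yields only $4\sigma^2$ (via $|f'(x)| \le 2$ for $f(x) = x \tanh(\mu x/\sigma^2)$), so the expectation identity is what enables the sharp constant of $1$ in \eqref{eq:gauss-tanh-var}.
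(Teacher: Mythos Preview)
Your proof is correct and takes a genuinely different route from the paper for both parts.

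For \eqref{eq:gauss-tanh-expect}, the paper reduces to $\nu = \mu/\sigma$, $Z \sim \mathcal{N}(\nu,1)$ and shows directly that $\EE[Z(\tanh(\nu Z)-1)] = 0$ by writing out the Gaussian integral and observing the integrand is odd. Your argument instead exploits the latent-variable structure: you recognize $\tanh(\mu x/\sigma^2)$ as the posterior mean $\EE[Z\mid \widetilde X = x]$ in the symmetric mixture $\widetilde X = Z\mu + \sigma G$, then use the tower property. This is more conceptual and ties the identity back to the EM setting of the paper, whereas the paper's computation is self-contained but somewhat opaque about \emph{why} the identity holds.

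For \eqref{eq:gauss-tanh-var}, your argument is strictly simpler. The paper expands $(Z\tanh(\nu Z)-\nu)^2$ into three terms, bounds one of them via $\tanh \le 1$, and then shows two separate integrals vanish by odd-function arguments. You bypass all of this with the pointwise bound $X^2\tanh^2(\cdot) \le X^2$, which together with \eqref{eq:gauss-tanh-expect} gives the variance bound in one line. Both approaches rely on the expectation identity to get the sharp constant, but yours makes transparent that nothing beyond $|\tanh|\le 1$ is needed once the mean is known.
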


\begin{proof}
Equation \eqref{eq:gauss-tanh-expect} is well-known, but we provide a proof for completeness. 
By a change of variable $\nu = \frac{\mu}{\sigma}$ and $Z = \frac{X}{\sigma} \sim \cN(\nu, 1)$, it suffices to show that 
\begin{equation}
\EE \left[ Z \tanh( \nu Z ) \right] = \nu .
\label{eq:tanh-change-of-var}
\end{equation}
We have $\E[Z] = \nu$ and 
\begin{equation}
\EE \left[ Z \left( \tanh( \nu Z ) - 1 \right) \right]  
= \frac{1}{\sqrt{2\pi}} \int_{\R} z \frac{-2 e^{-\nu z}}{e^{\nu z} + e^{-\nu z}} e^{-(z-\nu)^2/2} \, dz 
= - \frac{2 e^{-\nu^2/2}}{\sqrt{2\pi}}  \int_{\R} \frac{z e^{-z^2/2}}{e^{\nu z} + e^{-\nu z}} \, dz 
= 0 ,
\label{eq:tanh-inter}
\end{equation}
because the integrand is an odd function. 
Then \eqref{eq:tanh-change-of-var} immediately follows.

Next, it holds that 
\begin{equation}
\Var \left( X\tanh(\mu X/\sigma^2) \right) 
= \sigma^2 \Var \left( Z \tanh(\nu Z) \right) 
= \sigma^2 \E\left[ (Z \tanh(\nu Z) - \nu)^2 \right] .
\label{eq:tanh-var-1}
\end{equation}
Furthermore, we have 
\begin{align*}
(Z \tanh(\nu Z) - \nu)^2 
&= (Z - \nu)^2 - 2 v Z (\tanh(\nu Z) - 1) + Z^2 (\tanh(\nu Z)^2 - 1) \\
&\le (Z - \nu)^2 - 2 v Z (\tanh(\nu Z) - 1) + Z^2 \tanh(\nu Z) \, (\tanh(\nu Z) - 1)
\end{align*}
since $\tanh(\nu Z) \le 1$. 
Let us compute the expectations of these three terms. 
We have
$$
\E[(Z - \nu)^2] = 1, \qquad 
\E[ 2 v Z (\tanh(\nu Z) - 1) ] = 0 ,
$$
by $Z \sim \cN(\nu,1)$ and \eqref{eq:tanh-inter} respectively. 
For the third term, 
\begin{align*}
\E[Z^2 \tanh(\nu Z) \, (\tanh(\nu Z) - 1)]
&= \frac{1}{\sqrt{2\pi}}
\int_{\R} z^2 \frac{ (e^{\nu z} - e^{-\nu z}) (-2 e^{-\nu z}) }{ (e^{\nu z} + e^{-\nu z})^2 } e^{-(z-\nu)^2/2} \, dz \\
&= \frac{-2 e^{-v^2/2}}{\sqrt{2\pi}}
\int_{\R} z^2 e^{-z^2/2} \frac{ e^{\nu z} - e^{-\nu z} }{ (e^{\nu z} + e^{-\nu z})^2 } \, dz 
= 0 ,
\end{align*}
because  the integrand is an odd function. 
Combining the three terms with \eqref{eq:tanh-var-1} proves \eqref{eq:gauss-tanh-var}.
\end{proof}

We now control $\hat{Q}(\theta^*) - \theta^*$ in the $\ell_\infty$-norm.

\begin{lemma}\label{lem:theta_infty_bd}
Condition on a realization of $x_1, \ldots, x_N$ such that $\| \hat \Sigma^\dag \|_\infty \le 2 L$.
There is an absolute constant $C > 0$ such that for any $\delta \in (0,0.1)$, it holds with (conditional) probability at least $1 - \delta$ that
\[\|\hat{Q}(\theta^*) - \theta^*\|_\infty \leq C \sigma \sqrt{\frac{L\,d}{N}\log\frac d\delta}.\]
\end{lemma}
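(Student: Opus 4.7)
The strategy is to expand $\hat Q(\theta^*) - \theta^*$ as a sum of independent mean-zero random variables (conditional on the covariates) and apply a sub-Gaussian concentration bound coordinate by coordinate, using the bound $\|\hat\Sigma^\dag\|_\infty \le 2L$ to control the ``coefficient'' vector in each coordinate.

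The first step is to identify the mean. Conditional on $x_r$, Lemma~\ref{lem:gauss-tanh} gives
$
\E\bigl[\tanh(y_r x_r^\top \theta^*/\sigma^2)\, y_r \mid x_r\bigr] = x_r^\top \theta^*,
$
so $\E[\bar Q(\theta^*) \mid x_1,\ldots,x_N] = \hat\Sigma\, \theta^*$ and hence $\E[\hat Q(\theta^*) \mid x_1,\ldots,x_N] = \hat\Sigma^\dag \hat\Sigma\, \theta^* = \theta^*$ since $\theta^* \in \cH$. Writing $W_r := \tanh(y_r x_r^\top \theta^*/\sigma^2)\, y_r - x_r^\top \theta^*$, the $W_r$'s are (conditionally on $x_1,\ldots,x_N$) independent, mean zero, and
\[
\hat Q(\theta^*) - \theta^* = \hat\Sigma^\dag \cdot \frac{d-1}{2N} \sum_{r=1}^N W_r\, x_r.
\]
Picking out the $i$-th coordinate and writing $v_i := \hat\Sigma^\dag e_i \in \cH$ (so $v_i^\top x_r = (\hat\Sigma^\dag x_r)_i$ since $\hat\Sigma^\dag$ is symmetric), we obtain
\[
\bigl[\hat Q(\theta^*) - \theta^*\bigr]_i = \frac{d-1}{2N} \sum_{r=1}^N W_r\, v_i^\top x_r.
\]

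The second step is to show that $W_r$ is $O(\sigma)$-sub-Gaussian conditional on $x_r$. Setting $\mu := x_r^\top \theta^*$ and $g(y) := \tanh(\mu y/\sigma^2)\, y$, a short calculation gives $g'(y) = u\,\mathrm{sech}^2(u) + \tanh(u)$ with $u = \mu y/\sigma^2$, which is bounded by an absolute constant uniformly in $y$. So $g$ is Lipschitz in $y$ with a universal constant, and by the Gaussian Lipschitz concentration inequality applied to $y_r \sim \cN(\mu, \sigma^2)$, the random variable $W_r = g(y_r) - \mu$ is $C\sigma$-sub-Gaussian. Hence, conditional on $x_1,\ldots,x_N$, the sum $\sum_r W_r\, v_i^\top x_r$ is sub-Gaussian with variance proxy $C^2 \sigma^2 \sum_r (v_i^\top x_r)^2$.

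The third step computes this variance proxy. Since $\sum_r x_r x_r^\top = \tfrac{2N}{d-1} \hat\Sigma$ and $\hat\Sigma v_i = \hat\Sigma\hat\Sigma^\dag e_i = P_{\cH} e_i$, while $v_i \in \cH$,
\[
\sum_{r=1}^N (v_i^\top x_r)^2 = \tfrac{2N}{d-1}\, v_i^\top \hat\Sigma\, v_i = \tfrac{2N}{d-1}\, v_i^\top P_\cH e_i = \tfrac{2N}{d-1}\, (v_i)_i = \tfrac{2N}{d-1}\, (\hat\Sigma^\dag)_{ii}.
\]
Next, $(\hat\Sigma^\dag)_{ii} = e_i^\top \hat\Sigma^\dag P_\cH e_i \le \|\hat\Sigma^\dag P_\cH e_i\|_\infty \le \|\hat\Sigma^\dag\|_\infty \cdot \|P_\cH e_i\|_\infty \le 2L$, using the hypothesis and $\|e_i - \tfrac{1}{d}\bone\|_\infty \le 1$.

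Finally, Hoeffding's inequality for sub-Gaussian sums yields that for each $i$, with probability at least $1 - \delta/d$,
\[
\bigl|[\hat Q(\theta^*) - \theta^*]_i\bigr| \le \tfrac{d-1}{2N} \cdot C\sigma \sqrt{\tfrac{2N}{d-1}\,(\hat\Sigma^\dag)_{ii} \log(d/\delta)} \le C'\sigma \sqrt{\tfrac{L\,d}{N}\log(d/\delta)}.
\]
A union bound over $i \in [d]$ completes the proof. The only mildly subtle point is the Lipschitz calculation for $g$ in step two, which is what ensures the concentration scales with $\sigma$ rather than with $\sigma + \|\theta^*\|_\infty$.
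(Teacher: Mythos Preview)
Your proposal is correct and follows essentially the same approach as the paper: both proofs compute the conditional mean via Lemma~\ref{lem:gauss-tanh}, use Gaussian Lipschitz concentration with the derivative bound $|h(u)| = |\tanh(u) + u\,\tanh'(u)| \le 2$, reduce the variance proxy to $(\hat\Sigma^\dag)_{ii} = e_i^\top \hat\Sigma^\dag e_i \le \|\hat\Sigma^\dag\|_\infty \le 2L$, and finish with a union bound over $i \in [d]$. The only cosmetic difference is that the paper applies Lemma~\ref{lem:wainwright} to the full sum $f_w(y)$ in one shot, whereas you first show each $W_r$ is $C\sigma$-sub-Gaussian and then combine; these are equivalent packagings of the same Lipschitz argument.
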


\begin{proof}
Let us first check that $\E[\hat Q(\theta^*)] = \theta^*$. 
By \eqref{eq:gauss-tanh-expect}, 
we have
\begin{equation}
\EE_{y_r\sim \mathcal{N}(x_r^\top\theta^*, \sigma^2)} \left[ y_r\tanh\left(\frac{y_rx_r^\top\theta^*}{\sigma^2}\right) \right] = x_r^\top\theta^*.
\label{eq:gauss-expect}
\end{equation}
Then, it follows that
\begin{equation}
\E[\bar Q(\theta^*)] = \frac{d-1}{2N} \sum_{r=1}^N x_r x_r^\top \theta^* = \hat \Sigma \, \theta^*, \qquad 
\EE[\hat{Q}(\theta^*)] = \theta^*.
\label{eq:em-expect}
\end{equation}

Next, we study $\hat Q(\theta^*) - \theta^*$, whose $i$th entry is 
\begin{equation}
e_i^\top \big(\hat Q(\theta^*) - \theta^*\big) = e_i^\top \hat \Sigma^\dag \big(\bar Q(\theta^*) - \E[\bar Q(\theta^*)]\big).
\label{eq:row-inn-prod-1}
\end{equation}
For $w \in \cH$, define 
\begin{equation}
f_w(y)  :=  w^\top\bar Q(\theta^*) = \frac{d-1}{2N}\sum_{r = 1}^Ny_rx_r^\top w\tanh\left(\frac{y_rx_r^\top\theta^*}{\sigma^2}\right).
\label{eq:row-inn-prod-2}
\end{equation}
We will show that $f_w(y) - \EE_y[f_w(y)]$ is sub-Gaussian with variance parameter of order $\frac{\sigma^2L\,d}{N}$ when $w^\top$ is a row of $\hat\Sigma^\dag$. 
The proof is similar to that of Proposition~11 of \cite{kwon2019global}, with the main tool being Lemma~\ref{lem:wainwright}, a standard concentration result for Gaussian random variables.
To this end, let $v_r = \frac{y_r - x_r^\top\theta^*}{\sigma} = \frac{\eps_r}{\sigma}$ be i.i.d.\ standard Gaussians and introduce another independent set of i.i.d.\ standard Gaussians $\zeta_1, \dots, \zeta_N$. 
Let us define a new function
\[g_w(v)  :=  f_w(y) = \frac{d-1}{2N}\sum_{r = 1}^N(\sigma v_r + x_r^\top\theta^*)x_r^\top w\tanh\left(\frac{(\sigma v_r + x_r^\top\theta^*)x_r^\top\theta^*}{\sigma^2}\right).\]
For each $r \in [N]$, we have
\begin{align*}
(\nabla g_w(v))_r &= \frac{d-1}{2N}\,x_r^\top w\bigg[\sigma\tanh\left(\frac{(\sigma v_r + x_r^\top\theta^*)x_r^\top\theta^*}{\sigma^2}\right) \\
&\qquad \qquad \qquad \quad + (\sigma v_r + x_r^\top\theta^*)\frac{x_r^\top\theta^*}{\sigma}\tanh'\left(\frac{(\sigma v_r + x_r^\top\theta^*)x_r^\top\theta^*}{\sigma^2}\right)\bigg] \\
&= \frac{d-1}{2N}\,x_r^\top w\sigma \cdot h\left(\frac{(\sigma v_r + x_r^\top\theta^*)x_r^\top\theta^*}{\sigma^2}\right),
\end{align*}
where $h(t)  :=  \tanh(t) + t \cdot \tanh'(t)$. We will use the inequality $|h(t)| \leq 2$ for all $t \in \R$. 
By Lemma~\ref{lem:wainwright}, 
\begin{align*}
&~\EE[\exp(\lambda(f_w(y) - \EE[f_w(y)]))] 
=\EE[\exp(\lambda(g_w(v) - \EE[g_w(v)]))] \\
&\leq \EE_{v,\zeta}\left[\exp\left(\frac{\lambda\pi}{2}\langle\nabla g, \zeta\rangle\right)\right] \\
&= \EE_v\EE_\zeta\left[\exp\left(\frac{\lambda\pi}{2}\,\left(\frac{d-1}{2N}\sum_{r = 1}^N\zeta_rx_r^\top w\sigma \cdot h\left(\frac{(\sigma v_r + x_r^\top\theta^*)x_r^\top\theta^*}{\sigma^2}\right)\right)\right)\right] \\
&= \EE_v\left[\exp\left(\left(\frac{\lambda^2\sigma^2\pi^2(d-1)}{16N}\right)\left(\frac{d-1}{2N}\sum_{r = 1}^N \left( x_r^\top w \cdot h((\sigma v_r + x_r^\top\theta^*)x_r^\top\theta^*) \right)^2 \right)\right)\right] \\
&\leq \exp\left(\left(\frac{\lambda^2\sigma^2\pi^2(d-1)}{4N}\right)\left(\frac{d-1}{2N}\sum_{r = 1}^N(x_r^\top w)^2\right)\right) \\
&\leq \exp\left(\frac{\lambda^2\sigma^2\pi^2d}{4N}\,w^\top\hat\Sigma w\right).
\end{align*}
If $w^\top$ is the $i$th row of $\hat\Sigma^\dag$, we have $w = \hat \Sigma^\dag e_i$ and 
\[ 
w^\top \hat\Sigma w = e_i^\top \hat \Sigma^\dag e_i \le \|\hat \Sigma^\dag\|_\infty \leq 2L 
\]
by assumption.
It follows that
\[\EE[\exp(\lambda(f_w(y) - \EE[f_w(y)]))] \leq \exp\left(\frac{\lambda^2\sigma^2\pi^2 d L}{2N}\right),\]
so $f_w(y)$ is sub-Gaussian with variance parameter $\frac{\sigma^2\pi^2 d L}{N}$. 
Therefore, it holds with probability at least $1 - \delta/d$ that
\[
\big| f_w(y) - \EE[f_w(y)] \big| \leq C \sigma \sqrt{\frac{L d}{N} \log \frac{d}{\delta}}
\]
for a constant $C > 0$. 
In view of \eqref{eq:row-inn-prod-1} and \eqref{eq:row-inn-prod-2}, the left-hand side of the above bound is precisely $|e_i^\top (\hat Q(\theta^*) - \theta^*)|$ when $w^\top$ is the $i$th row of $\hat \Sigma^\dag$. 
Taking a union bound over $i \in [d]$ proves the desired result on $\|\hat Q(\theta^*) - \theta^*\|_\infty$. 
\end{proof}

The following two lemmas provide a sharp control on $\hat{Q}(\theta^*) - \theta^*$ in the $\ell_2$-norm: they respectively bound the expectation and deviation of $\|\hat{Q}(\theta^*) - \theta^*\|_2^2$.

\begin{lemma}
\label{lem:l2-err-exp}
Condition on a realization of $x_1, \ldots, x_N$ such that $\hat \Sigma^\dag$ is the inverse of $\hat \Sigma$ as a map on $\cH$. 
Let $\E[\cdot]$ denote the conditional expectation.
Then we have
$$
\EE \big[ \|\hat{Q}(\theta^*) - \theta^*\|_2^2 \big]
\le \sigma^2 \frac{d-1}{2N} \tr(\hat \Sigma^\dag) .
$$
\end{lemma}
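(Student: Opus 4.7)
The plan is to decompose $\hat Q(\theta^*) - \theta^*$ using the EM operator's structure and then reduce the bound to a computation on conditional covariances of independent summands, with Lemma~\ref{lem:gauss-tanh} doing the key variance estimate.

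First, I would recall from \eqref{eq:em-expect} that $\E[\hat Q(\theta^*)] = \theta^*$ (conditionally on the covariates), so
\[
\hat Q(\theta^*) - \theta^* \;=\; \hat\Sigma^\dag\bigl(\bar Q(\theta^*) - \E[\bar Q(\theta^*)]\bigr).
\]
Writing $\xi_r := y_r \tanh(y_r x_r^\top \theta^*/\sigma^2) - x_r^\top\theta^*$ for $r \in [N]$, we have $\bar Q(\theta^*) - \E[\bar Q(\theta^*)] = \frac{d-1}{2N}\sum_{r=1}^N \xi_r x_r$. Conditional on $x_1,\dots,x_N$, the random variables $\xi_1,\dots,\xi_N$ are independent (as the $\epsilon_r$'s are independent), and by \eqref{eq:gauss-tanh-expect}–\eqref{eq:gauss-tanh-var} of Lemma~\ref{lem:gauss-tanh} applied with $\mu = x_r^\top\theta^*$, we have $\E[\xi_r] = 0$ and $\E[\xi_r^2] \le \sigma^2$ for each $r$.

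Next, I would compute the conditional covariance of $\bar Q(\theta^*)$. By independence,
\[
M \;:=\; \E\bigl[(\bar Q(\theta^*) - \E[\bar Q(\theta^*)])(\bar Q(\theta^*) - \E[\bar Q(\theta^*)])^\top\bigr] \;=\; \Bigl(\tfrac{d-1}{2N}\Bigr)^{\!2}\sum_{r=1}^N \E[\xi_r^2]\, x_r x_r^\top \;\preceq\; \sigma^2\,\tfrac{d-1}{2N}\,\hat\Sigma,
\]
using $\sum_r x_r x_r^\top = \frac{2N}{d-1}\hat\Sigma$. Consequently,
\[
\E\bigl[\|\hat Q(\theta^*) - \theta^*\|_2^2\bigr] \;=\; \tr\bigl(\hat\Sigma^\dag M \hat\Sigma^\dag\bigr) \;=\; \tr\bigl(M(\hat\Sigma^\dag)^2\bigr) \;\le\; \sigma^2\,\tfrac{d-1}{2N}\,\tr\bigl(\hat\Sigma(\hat\Sigma^\dag)^2\bigr),
\]
where the middle equality uses cyclicity of trace and symmetry of $\hat\Sigma^\dag$, and the inequality uses that $(\hat\Sigma^\dag)^2\succeq 0$ together with the PSD ordering $M \preceq \sigma^2 \frac{d-1}{2N}\hat\Sigma$.

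Finally, since each $x_r \in \cH$ we have $\mathrm{range}(\hat\Sigma)\subseteq \cH$, and by the conditioning assumption $\hat\Sigma^\dag$ is the genuine inverse of $\hat\Sigma$ on $\cH$; hence $\hat\Sigma\,\hat\Sigma^\dag$ is the orthogonal projection onto $\cH$, and $\hat\Sigma(\hat\Sigma^\dag)^2 = \hat\Sigma^\dag$ as operators. Taking the trace yields $\tr(\hat\Sigma(\hat\Sigma^\dag)^2) = \tr(\hat\Sigma^\dag)$, giving the claimed bound. No step here is technically hard; the only subtlety is keeping track of the $\cH$-restriction in the pseudoinverse identities, which is handled by the conditioning hypothesis.
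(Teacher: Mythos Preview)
Your proof is correct and follows essentially the same route as the paper: both write $\hat Q(\theta^*)-\theta^*=\hat\Sigma^\dag\sum_r \xi_r x_r$ (up to scaling), use independence and Lemma~\ref{lem:gauss-tanh} to get $\E[\xi_r^2]\le\sigma^2$, establish a Loewner upper bound on the covariance, and then take the trace using $\hat\Sigma(\hat\Sigma^\dag)^2=\hat\Sigma^\dag$. The only cosmetic difference is that the paper sandwiches by $(\sum_r x_rx_r^\top)^\dag$ and shows the PSD difference explicitly, whereas you bound $M$ first and then apply trace monotonicity with $(\hat\Sigma^\dag)^2$; these are equivalent.
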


\begin{proof}
By the definition of $\hat Q$ in \eqref{eq:qq-2}, 
it is not difficult to see that
$$
\hat{Q}(\theta^*) - \theta^*
= \bigg( \sum_{r = 1}^Nx_rx_r^\top \bigg)^\dag \sum_{r=1}^N \bigg( \tanh \Big( \frac{ y_r \, x_r^\top \theta^* }{\sigma^2} \Big) y_r - x_r^\top \theta^* \bigg) x_r .
$$
Then it follows that
\begin{align*}
&\E\big[ (\hat{Q}(\theta^*) - \theta^*) (\hat{Q}(\theta^*) - \theta^*)^\top \big] \\
&= \bigg( \sum_{r = 1}^N x_r x_r^\top \bigg)^\dag \Bigg( \sum_{r=1}^N \E \bigg[ \bigg( \tanh \Big( \frac{ y_r \, x_r^\top \theta^* }{\sigma^2} \Big) y_r - x_r^\top \theta^* \bigg)^2 \bigg] x_r x_r^\top \Bigg) \bigg( \sum_{r = 1}^N x_r x_r^\top \bigg)^\dag ,
\end{align*}
where the cross terms vanish thanks to \eqref{eq:gauss-expect}.
By \eqref{eq:gauss-tanh-var}, we have 
$$
\sigma_r := \E \bigg[ \bigg( \tanh \Big( \frac{ y_r \, x_r^\top \theta^* }{\sigma^2} \Big) y_r - x_r^\top \theta^* \bigg)^2 \bigg] \le \sigma^2. 
$$
As a result, the matrix
$$
\sigma^2 \bigg( \sum_{r = 1}^N x_r x_r^\top \bigg)^\dag  - \E\big[ (\hat{Q}(\theta^*) - \theta^*) (\hat{Q}(\theta^*) - \theta^*)^\top \big] 
= \bigg( \sum_{r = 1}^N x_r x_r^\top \bigg)^\dag \Bigg( \sum_{r=1}^N (\sigma^2 - \sigma_r^2) x_r x_r^\top \Bigg) \bigg( \sum_{r = 1}^N x_r x_r^\top \bigg)^\dag
$$
is positive semi-definite. 
In other words, 
$$
\E\big[ (\hat{Q}(\theta^*) - \theta^*) (\hat{Q}(\theta^*) - \theta^*)^\top \big] \preccurlyeq \sigma^2 \bigg( \sum_{r = 1}^N x_r x_r^\top \bigg)^\dag  
$$
in the Loewner order. 
Since the trace operator is monotone in the Loewner order, we obtain 
$$
\EE \big[ \|\hat{Q}(\theta^*) - \theta^*\|_2^2 \big]
= \tr \Big( \E\big[ (\hat{Q}(\theta^*) - \theta^*) (\hat{Q}(\theta^*) - \theta^*)^\top \big] \Big)
\le \sigma^2 \tr \Bigg( \bigg( \sum_{r = 1}^N x_r x_r^\top \bigg)^\dag \Bigg) ,
$$
which completes the proof. 
\end{proof}

\begin{lemma}\label{lem:theta_ell2_bd}
Condition on a realization of $x_1, \dots, x_N$ such that $\|\hat\Sigma\|_{op} \leq 3$ and $\|\hat\Sigma^\dag\|_{op} \leq 5$.
Then there exists an absolute constant $C > 0$ such that for any $\delta \in (0,0.1)$, we have 
\[ \Big| \|\hat{Q}(\theta^*) - \theta^*\|_2^2 - \EE \big[ \|\hat{Q}(\theta^*) - \theta^*\|_2^2 \big] \Big| \le  C \sigma^2 \bigg( \frac{d^{3/2}}{N}\sqrt{\log\frac{1}{\delta}}+ \frac{d}{N} \log \frac{1}{\delta} \bigg) \]
with (conditional) probability at least $1 - \delta$.
\end{lemma}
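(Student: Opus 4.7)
The approach is to apply Gaussian concentration to $F := \|\hat Q(\theta^*) - \theta^*\|_2$, viewed as a function of the noise vector $\epsilon = (\epsilon_1, \ldots, \epsilon_N) \sim \mathcal{N}(0, \sigma^2 I)$ (conditional on the $x_r$'s), and then to transfer the result to a bound on $F^2$ via the identity $F^2 - (\E F)^2 = (F - \E F)(F + \E F)$.

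The first step is to compute the Lipschitz constant of $F$ in $\epsilon$. Writing $v(\epsilon) := \hat Q(\theta^*) - \theta^* = M \sum_{r=1}^N W_r(\epsilon_r) x_r$ with $M := \frac{d-1}{2N}\hat\Sigma^\dag$ and $W_r(\epsilon_r) := \tanh(y_r x_r^\top \theta^*/\sigma^2)\, y_r - x_r^\top\theta^*$, the Jacobian $J_v$ of $v$ has $r$-th column $W_r'(\epsilon_r)\, M x_r$; as in the proof of Lemma~\ref{lem:theta_infty_bd}, $|W_r'(\epsilon_r)| \leq 2$. The function $F^2 = v^\top v$ is itself not globally Lipschitz, but its square root is: the chain rule gives $\nabla F = J_v^\top v / F$ whenever $F > 0$, so
\[
\|\nabla F\|_2 \leq \|J_v\|_{op}\, \|v\|_2 / F = \|J_v\|_{op}.
\]
Since $J_v J_v^\top = M \bigl(\sum_r (W_r'(\epsilon_r))^2 x_r x_r^\top\bigr) M^\top \preccurlyeq 4\,M \bigl(\sum_r x_r x_r^\top\bigr) M^\top$, and on the conditioning event we have $\|\sum_r x_r x_r^\top\|_{op} = \tfrac{2N}{d-1}\|\hat\Sigma\|_{op} \leq \tfrac{6N}{d-1}$ and $\|M\|_{op} \leq \tfrac{5(d-1)}{2N}$, we obtain the deterministic bound $\|\nabla F\|_2 \leq C\sqrt{d/N}$.

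Applying Gaussian concentration for Lipschitz functions (Lemma~\ref{lem:wainwright}), $F$ is sub-Gaussian about its mean with variance proxy at most $C\sigma^2 d/N$, so $|F - \E F| \leq C\sigma \sqrt{(d/N) \log(1/\delta)}$ with probability at least $1 - \delta$, and $\Var(F) \leq C\sigma^2 d/N$. Combining this with the bound $\E F \leq \sqrt{\E F^2} \leq C\sigma d/\sqrt{N}$ (from Lemma~\ref{lem:l2-err-exp} together with $\tr(\hat\Sigma^\dag) \leq (d-1)\|\hat\Sigma^\dag\|_{op} \leq 5(d-1)$), the factorization $F^2 - (\E F)^2 = (F - \E F)(F + \E F)$ yields
\[
|F^2 - (\E F)^2| \leq |F - \E F|\bigl(2 \E F + |F - \E F|\bigr) \leq C\sigma^2 \bigg( \frac{d^{3/2}}{N} \sqrt{\log(1/\delta)} + \frac{d}{N} \log(1/\delta) \bigg).
\]
Adding $|\E F^2 - (\E F)^2| = \Var(F) \leq C\sigma^2 d/N$, which is absorbed into the $(d/N)\log(1/\delta)$ term since $\log(1/\delta) > 1$ for $\delta \in (0, 0.1)$, will complete the proof.

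The central technical observation, and the one point where one could easily get a suboptimal rate, is the self-bounded cancellation: although $\nabla F^2 = 2 J_v^\top v$ has a random norm of order $F\sqrt{d/N}$, passing through $\nabla F = \nabla F^2/(2F)$ cancels the random $F$ factor, turning what would be a self-bounded (Bernstein-type) concentration problem for $F^2$ into a clean Gaussian concentration problem for $F$ with a deterministic Lipschitz constant of order $\sqrt{d/N}$; concentrating $F$ directly and then squaring is what produces the sharp $d^{3/2}/N$ scaling rather than the weaker $d^2/N$ one would get by a naive application of Gaussian concentration to $F^2$.
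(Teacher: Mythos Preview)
Your proof is correct, but it takes a genuinely different route from the paper's. The paper shows that the \emph{vector} $v(\epsilon') = \hat Q(\theta^*)-\theta^*$ is $O(\sigma\sqrt{d/N})$-Lipschitz in the standardized noise $\epsilon'=\epsilon/\sigma$; this implies that $v$ satisfies the convex concentration property (Definition~\ref{def:conv_conc_prop}), and then a Hanson--Wright type inequality for such vectors (Lemma~\ref{lem:conv_conc}, due to Adamczak) with $A=I_d$ gives concentration of $\|v\|_2^2$ directly, yielding the stated $d^{3/2}/N$ and $d/N$ terms in one shot. You instead concentrate the \emph{scalar} $F=\|v\|_2$ via ordinary Gaussian Lipschitz concentration, bound $\E F$ through Lemma~\ref{lem:l2-err-exp} and $\tr(\hat\Sigma^\dag)\le (d-1)\|\hat\Sigma^\dag\|_{op}$, and pass to $F^2$ algebraically, adding back $\Var(F)\le C\sigma^2 d/N$. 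Your argument is more elementary --- it needs only scalar Gaussian concentration and avoids the convex-concentration/Hanson--Wright machinery --- while the paper's route is more packaged and would generalize immediately to non-Gaussian noise satisfying convex concentration. One small citation issue: the Gaussian Lipschitz concentration you invoke is Lemma~\ref{lem:ver-5.2.2} in the paper, not Lemma~\ref{lem:wainwright} (which is the Pisier-type gradient comparison used in Lemma~\ref{lem:theta_infty_bd}).
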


\begin{proof}
Let $\eps' := \eps/\sigma \sim \mathcal{N}(0, I_N)$, and then $y_r = x_r^\top\theta^* + \sigma\eps'_r$.
Consider the random vector
\[f(\eps')  :=  \hat{Q}(\theta^*) - \theta^* = \hat\Sigma^\dag\left(\frac{d-1}{2N}\sum\limits_{r = 1}^N\tanh\left(\frac{(x_r^\top\theta^* + \sigma \eps'_r) x_r^\top\theta^*}{\sigma^2}\right) (x_r^\top\theta^* + \sigma \eps'_r) x_r\right) - \theta^*.\]
We claim that the function $f(\cdot)$ is $15 \sigma \sqrt{d/N}$-Lipschitz in the Euclidean distance. 

Assuming the claim for a moment, we now show that the random vector $f(\eps')$ satisfies the convex concentration property in Definition \ref{def:conv_conc_prop}. To this end, let $\phi$ be an arbitrary $1$-Lipschitz function. (Note that we do not require $\phi$ to be convex and hence prove something stronger.)
Then $\phi \circ f$ is $15 \sigma \sqrt{d/N}$-Lipschitz, so by Lemma~\ref{lem:ver-5.2.2},
\[
\p\left\{ \big| \phi(f(\eps')) - \EE[\phi(f(\eps'))] \big| \ge t \right\} \leq 2 \exp\left( \frac{-t^2}{C_1^2 \sigma^2 d/N} \right) \]
for an absolute constant $C_1 > 0$.  Therefore, $f(\eps')$ satisfies the convex concentration property for $K = C_1\,\sigma\,\sqrt{d/N}$. 
We have seen in \eqref{eq:em-expect} that $\EE[f(\eps')] = 0$. 
Then Lemma~\ref{lem:conv_conc} with $A = I_d$ yields that for an absolute constant $C_2 > 0$,
$$
\Pr\left\{ \big| \|\hat{Q}(\theta^*) - \theta^*\|_2^2 - \EE[\|\hat{Q}(\theta^*) - \theta^*\|_2^2] \big| \geq t \right\} \\
\leq 2\exp\left(-\frac{1}{C_2} \min\left(\frac{N^2t^2}{\sigma^4d^3}, \frac{Nt}{\sigma^2d}\right)\right).
$$
From here, the lemma follows.

It remains to  prove the claim that the function $f(\cdot)$ is $15 \sigma \sqrt{d/N}$-Lipschitz. 
Let $\eps', \eps'' \in \R^N$.
By Taylor's theorem, we have for some $t_r \in (0,1)$, 
\begin{align*}
&\tanh\left(\frac{(x_r^\top\theta^*+ \sigma\eps''_r)x_r^\top\theta^*}{\sigma^2}\right)(x_r^\top\theta^*+ \sigma\eps''_r) 
- \tanh\left(\frac{(x_r^\top\theta^*+ \sigma\eps'_r)x_r^\top\theta^*}{\sigma^2}\right)(x_r^\top\theta^*+ \sigma\eps'_r) \\
&= (\eps''_r - \eps'_r)\frac{d}{d\tilde\eps}\left(\tanh\left(\frac{(x_r^\top\theta^*+ \sigma\tilde\eps)x_r^\top\theta^*}{\sigma^2}\right)(x_r^\top\theta^*+ \sigma\tilde\eps)\right)\bigg|_{\tilde\eps = \eps'_r+t_r(\eps''_r - \eps'_r)} \\
&= (\eps''_r - \eps'_r) \, \sigma \, h\left(\frac{(x_r^\top\theta^*+ \sigma\tilde\eps_r)x_r^\top\theta^*}{\sigma^2} \right) ,
\end{align*}
where $h(t) := \tanh(t) + t \tanh'(t)$ for $t \in \R$,
and $\tilde\eps_r := \eps'_r+t_r(\eps''_r - \eps'_r)$.  
Then we have 
$$
f(\eps'') - f(\eps')
= \hat\Sigma^\dag \left( \frac{d-1}{2N}\sum_{r = 1}^N (\eps''_r - \eps'_r) \, \sigma \, h\left(\frac{(x_r^\top\theta^*+ \sigma\tilde\eps_r) x_r^\top\theta^*}{\sigma^2} \right) x_r\right)  
= \sigma \hat\Sigma^\dag \tilde X  D (\eps'' - \eps') ,
$$
where $\tilde X \in \R^{d\times N}$ is the matrix whose $r$th column is $\frac{d-1}{2N}\,x_r$, and $D \in \R^{N\times N}$ is the diagonal matrix defined by 
\[D_{rr} := h\left(\frac{(y_r+t_r(y_r - y_r))x_r^\top\theta^*}{\sigma^2}\right).\]
As a result,
$$
\|f(\eps'') - f(\eps')\|_2 
\leq \sigma\|\hat\Sigma^\dag\|_{op}\|\tilde X\|_{op}\|D\|_{op}\|\eps'-\eps''\|_2 .
$$
Note that $\tilde X \tilde X^\top = \frac{d-1}{2N}\hat\Sigma$ and so we have
\[\|\tilde X\|_{op} 
= \sqrt{\frac{d-1}{2N} \|\hat \Sigma\|_{op}}
\le \sqrt{\frac{3d}{2N}} \]
by the conditioning. 
Moreover, we have $\|\hat\Sigma^\dag\|_{op} \le 5$ by the conditioning, and $\|D\|_{op} \leq 2$ by the fact that $|h(t)| = |\tanh(t) + t \tanh'(t)| \le 2$ for all $t \in \R$. Combining all the bounds gives
\[\|f(\eps'') - f(\eps')\|_2 \leq 15 \,\sigma\,\sqrt{\frac{d}{N}} \, \|\eps'' -  \eps'\|_2 ,\]
completing the proof.
\end{proof}

\subsection{Convergence analysis}
\label{sec:convergence}

Assuming the bounds in Lemma~\ref{lem:high-prob-bds} and the control on $\|\hat \Sigma^\dag\|_\infty$ in Proposition~\ref{prop:sigma-dag-bound}, the following result shows that the EM operator is contractive locally around $\theta^*$, deterministically.

\begin{lemma}
\label{lem:easy-contraction}
There exist absolute constants $C, C' > 0$ such that the following holds.
Suppose that all the bounds in Lemma~\ref{lem:high-prob-bds} hold with $C>0$ and $\delta \in (0,0.1)$.
Consider $\theta, \theta' \in \cH$, $\lambda \in (0,1)$, and $\Delta := \max\{\frac{1}{\lambda} \|\theta - \theta'\|_\infty, \frac{3 \sigma}{\sqrt{\lambda}}\}$, such that
$$
\max\{ \|\theta - \theta^*\|_\infty, \, \|\theta' - \theta^*\|_\infty \} \le \frac{\Delta}{6} , \qquad
N \ge C' \frac{d}{\lambda^2} \log \frac d \delta , \qquad
\max_{i \in [d]} |S_i(\Delta)| \le \frac{\lambda^2}{C'} d ,
$$
where $S_i(\cdot)$ is defined as in \eqref{def:S_i}.
Then we have
$$
\|\bar Q(\theta) - \bar Q(\theta')\|_\infty
\le \lambda \|\theta - \theta'\|_\infty .
$$
If, in addition, $\|\hat \Sigma^\dag\|_\infty \le \frac{1}{2\lambda}$, then
$$
\|\hat Q(\theta) - \hat Q(\theta')\|_\infty
\le \frac 12 \|\theta - \theta'\|_\infty .
$$
\end{lemma}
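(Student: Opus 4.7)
The plan is to analyze $(\bar Q(\theta)-\bar Q(\theta'))_i$ coordinatewise for $i\in[d]$. By the definition of $\bar Q$ in \eqref{eq:qq-1} and of $\cR_i$ in \eqref{eq:def-ri},
$$
(\bar Q(\theta) - \bar Q(\theta'))_i = \frac{d-1}{2N}\sum_{r \in \cR_i} \Bigl[\tanh\Bigl(\tfrac{y_r x_r^\top \theta}{\sigma^2}\Bigr) - \tanh\Bigl(\tfrac{y_r x_r^\top \theta'}{\sigma^2}\Bigr)\Bigr] y_r (x_r)_i,
$$
with $|(x_r)_i|=1$. The approach is to split $\cR_i$ into the ``near'' indices $\cR_i(\Delta)$ from \eqref{def:R_i} and the ``far'' complement $\cR_i \setminus \cR_i(\Delta)$, and bound each range by exploiting different properties of $\tanh$.

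For the near part, I would combine the two Lipschitz-type estimates $|\tanh(a)-\tanh(b)| \le |a-b|$ and $|\tanh(a)-\tanh(b)|\le 2$ to obtain a per-term bound $\min\bigl(2 y_r^2\|\theta-\theta'\|_\infty/\sigma^2,\; 2|y_r|\bigr)$, then split on which of the two alternatives defines $\Delta$. If $\Delta = 3\sigma/\sqrt\lambda$ (so that $\|\theta-\theta'\|_\infty \le 3\sigma\sqrt\lambda$), I apply the first alternative together with the $\sum_{r\in\cR_i(\Delta)} y_r^2$ bound from Lemma~\ref{lem:high-prob-bds}; the prefactor $\Delta^2/\sigma^2 = 9/\lambda$ combined with the hypotheses $|S_i(\Delta)| \le \lambda^2 d/C'$ and $\tfrac{d}{N}\log\tfrac d\delta \le \lambda^2/C'$ yields a near-contribution of at most $\lambda\|\theta-\theta'\|_\infty/4$. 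If instead $\Delta = \|\theta-\theta'\|_\infty/\lambda$, I apply the second alternative together with the $\sum_{r\in\cR_i(\Delta)} |y_r|$ bound from the same lemma, and the factor $\Delta = \|\theta-\theta'\|_\infty/\lambda$ combined with the same hypotheses again delivers a near-contribution of at most $\lambda\|\theta-\theta'\|_\infty/4$.

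For the far part, the assumption $\max\{\|\theta-\theta^*\|_\infty,\|\theta'-\theta^*\|_\infty\} \le \Delta/6$ forces $|x_r^\top \theta|,|x_r^\top \theta'| \ge 2\Delta/3$ whenever $r \in \cR_i\setminus\cR_i(\Delta)$, so any $\xi$ between $y_r x_r^\top\theta/\sigma^2$ and $y_r x_r^\top\theta'/\sigma^2$ obeys $|\xi|\ge 2|y_r|\Delta/(3\sigma^2)$. The mean value theorem together with the inequality $\tanh'(\xi)\le 4 e^{-2|\xi|}$ then produces a per-term bound of order $y_r^2 \sigma^{-2}\|\theta-\theta'\|_\infty \, e^{-4|y_r|\Delta/(3\sigma^2)}$. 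Maximizing $y^2 e^{-cy}$ over $y\ge 0$ eliminates $|y_r|$ and leaves $O(\sigma^2\|\theta-\theta'\|_\infty/\Delta^2)$ per term; using $|\cR_i|\le 2.1 N/d$ from Lemma~\ref{lem:high-prob-bds} and $\Delta^2\ge 9\sigma^2/\lambda$ bounds the far contribution by $\lambda\|\theta-\theta'\|_\infty/2$. Summing the near and far contributions and taking a maximum over $i\in[d]$ yields $\|\bar Q(\theta)-\bar Q(\theta')\|_\infty\le \lambda\|\theta-\theta'\|_\infty$. The $\hat Q$ statement then follows at once since $\bar Q(\theta)-\bar Q(\theta')\in\cH$ (each $x_r\in\cH$) and $\|\hat\Sigma^\dag\|_\infty$ was defined on $\cH$, giving $\|\hat Q(\theta)-\hat Q(\theta')\|_\infty\le \|\hat\Sigma^\dag\|_\infty\cdot\lambda\|\theta-\theta'\|_\infty\le \tfrac12\|\theta-\theta'\|_\infty$.

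\emph{The main obstacle} is orchestrating the case split so that all constants align. The choice $\Delta = \max\{\|\theta-\theta'\|_\infty/\lambda,\, 3\sigma/\sqrt\lambda\}$ is precisely what balances the two regimes: in the signal-dominated case one needs $\Delta$ linear in $\|\theta-\theta'\|_\infty$ so that the $\tanh$ boundedness kicks in, whereas in the noise-dominated case one needs $\Delta^2/\sigma^2\asymp 1/\lambda$ so that the linear $\tanh$-bound combined with the $\sum y_r^2$ estimate converts into the contraction factor $\lambda$; simultaneously, the far regime must have $\Delta$ large enough that the exponential decay of $\tanh'$ cancels the potentially large $y_r^2$ factors. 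Verifying that a single absolute constant $C'$ in the hypotheses on $|S_i(\Delta)|$ and $N$ suffices in both cases, without introducing any extra logarithmic losses, is the delicate bookkeeping step.
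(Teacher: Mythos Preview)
Your proposal is correct and follows essentially the same approach as the paper. The paper also splits $\cR_i$ into $\cR_i(\Delta)$ and its complement, uses the mean-value/integral form of the $\tanh$ difference together with $\tanh'(t)\le 4e^{-2|t|}$ and the calculus fact $\max_{a\ge 0} a^2 e^{-ab}=4/(eb)^2$ for the far part, and on the near part combines the $|\tanh|\le 1$ and Lipschitz bounds with the $\sum|y_r|$ and $\sum y_r^2$ estimates from Lemma~\ref{lem:high-prob-bds}; the only cosmetic difference is that the paper packages your case split on $\Delta$ into the single inequality $\min\{\Delta,\,4\eta\Delta^2/\sigma^2\}\le 36\eta/\lambda$.
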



\begin{proof}
For notational simplicity, let us consider iteration of the first entry of each vector and bound $|\bar Q(\theta)_1 - \bar Q(\theta')_1|$. The same analysis applies to any other entry with straightforward modification.
We can write the iteration of the first entry as
\[\bar Q(\theta)_1 = \frac{d-1}{2N} \sum_{r \in \cR_1} \tanh \Big( \frac{ y_r \, x_r^\top \theta }{\sigma^2} \Big) y_r, \]
where $\cR_1$ is defined as in \eqref{eq:def-ri}.
It follows that
\begin{align*}
|\bar Q(\theta)_1 - \bar Q(\theta')_1|
\leq \frac{d-1}{2N} \sum_{r \in \cR_1} \left|\tanh \Big( \frac{ y_r \, x_r^\top \theta }{\sigma^2} \Big) - \tanh \Big( \frac{ y_r \, x_r^\top \theta' }{\sigma^2} \Big)\right| |y_r|.
\end{align*}
Define $\theta(z) := \theta' + z(\theta - \theta')$ for $z \in [0,1]$. By the fundamental theorem of calculus, we have
\begin{align*}
\left| \tanh \bigg( \frac{ y_r \, x_r^\top \theta }{\sigma^2} \bigg) - \tanh \bigg( \frac{ y_r \, x_r^\top \theta' }{\sigma^2} \bigg) \right|
&= \left| \int_{0}^1\tanh'\left( \frac{ y_r \, x_r^\top \theta(z) }{\sigma^2} \right)\,\left( \frac{ y_r \, x_r^\top (\theta - \theta') }{\sigma^2} \right)\,dz \right| \\
&\leq \frac{ | y_r \, x_r^\top (\theta - \theta') |}{\sigma^2}\int_0^1 4\exp {\left( -2\frac{ |y_r \, x_r^\top \theta(z)| }{\sigma^2} \right)}\,dz \\
&\leq 4\frac{ |y_r \, x_r^\top (\theta - \theta')| }{\sigma^2}\exp {\left( -2\min_{0 \leq z \leq 1}\frac{ |y_r \, x_r^\top \theta(z)| }{\sigma^2} \right)} ,
\end{align*}
where we used the fact $0 \le \tanh'(t) = (\cosh(t))^{-2} \le 4 \exp(-2|t|)$ for all $t \in \R$.
Combining the above bounds with the trivial bound $|\tanh(t)| \le 1$ for $t \in \R$, we obtain
\begin{equation}
|\bar Q(\theta)_1 - \bar Q(\theta')_1| \leq \frac{d}{N} \sum_{r \in \cR_1} \min\left\{ |y_r| , \, 2 \frac{y_r^2 \, |x_r^\top(\theta - \theta')|}{\sigma^2}\,\exp {\left( -2\min_{0 \leq z \leq 1}\frac{ |y_r \, x_r^\top \theta(z)| }{\sigma^2} \right)} \right\} .
\label{eq:int-bd}
\end{equation}

For brevity, we let $\eta := \|\theta - \theta'\|_\infty$ in the sequel.
Then $\Delta = \max\{\frac \eta \lambda, \frac{3\sigma}{\sqrt{\lambda}}\}$ by definition.
Recall that the sets $S_1(\Delta)$ and $\cR_1(\Delta)$ are defined in \eqref{def:S_i} and \eqref{def:R_i} respectively.
We now analyze the summands in \eqref{eq:int-bd} for $r \in \cR_1(\Delta)$ and $r \in \cR_1 \setminus \cR_1(\Delta)$ separately.

First, for $r \in \cR_1 \setminus \cR_1(\Delta)$, we have $x_r = e_1 - e_j$ such that $|x_r^\top \theta^*| = |\theta_1^* - \theta_j^*| > \Delta$.
Moreover, $\|\theta - \theta^*\|_\infty \le \Delta/6$ and $\|\theta' - \theta^*\|_\infty \le \Delta/6$ by assumption.
It follows that
\begin{align*}
|x_r^\top\theta(z)| = |\theta(z)_1 - \theta(z)_j|
&\ge |\theta^*_1 - \theta^*_j| - |\theta(z)_1 - \theta^*_1| - |\theta(z)_j - \theta^*_j| \\
&\ge |\theta^*_1 - \theta^*_j| - z |\theta_1 - \theta^*_1| - (1-z) |\theta'_1 - \theta^*_1| - z |\theta_j - \theta^*_j| - (1-z) |\theta'_j - \theta^*_j| \\
&\ge |x_r^\top \theta^*| - \frac{\Delta}{3}
> \frac 23 |x_r^\top\theta^*|
\end{align*}
for any $z \in [0,1]$.
Moreover, it holds that $|x_r^\top(\theta - \theta')| \le 2 \|\theta - \theta'\|_\infty = 2 \eta$.
Therefore,
$$
2 \frac{y_r^2 \, |x_r^\top(\theta - \theta')|}{\sigma^2}\,\exp {\left( -2\min_{0 \leq z \leq 1}\frac{ |y_r \, x_r^\top \theta(z)| }{\sigma^2} \right)}
\leq 4 \eta \frac{y_r^2}{\sigma^2}\,\exp {\left( \frac{ -4 \, |y_r| \, |x_r^\top \theta^*| }{3 \sigma^2} \right)} .
$$
It is not hard to see that $\max_{a \ge 0} a^2 \exp(-ab) = 4/(eb)^2$ for $b>0$.
Taking $a = |y_r|$ and $b = \frac{ 4 \, |x_r^\top \theta^*| }{3 \sigma^2}$ in the above bound then yields
\begin{align*}
2 \frac{y_r^2 \, |x_r^\top(\theta - \theta')|}{\sigma^2}\,\exp {\left( -2\min_{0 \leq z \leq 1}\frac{ |y_r \, x_r^\top \theta(z)| }{\sigma^2} \right)}
\le \frac{9 \sigma^2}{e^2 \, |x_r^\top \theta^*|^2} \eta
\le \Big( \frac{3 \sigma}{e \Delta} \Big)^2 \eta .
\end{align*}
As a result,
\begin{equation}
\frac{d}{N} \sum_{r \in \cR_1 \setminus \cR_1(\Delta)} 2 \frac{y_r^2 \, |x_r^\top(\theta - \theta')|}{\sigma^2}\,\exp {\left( -2\min_{0 \leq z \leq 1}\frac{ |y_r \, x_r^\top \theta(z)| }{\sigma^2} \right)}
\le \frac dN |\cR_1| \Big( \frac{3 \sigma}{e \Delta} \Big)^2 \eta
\le \frac{\lambda}{2} \eta
\label{eq:good-part}
\end{equation}
by the bound $|\cR_1| \le 3N/d$ in Lemma~\ref{lem:high-prob-bds} and the condition $\Delta \ge \frac{3 \sigma}{\sqrt{\lambda}}$.

Second, for $r \in \cR_1(\Delta)$, since $\exp(-t) \le 1$ for $t \ge 0$ and $|x_r^\top(\theta - \theta')| \le 2 \eta$, we have
\begin{align*}
&\frac{d}{N} \sum_{r \in \cR_1(\Delta)} \min\left\{ |y_r| , \, 2 \frac{y_r^2 \, |x_r^\top(\theta - \theta')|}{\sigma^2}\,\exp {\left( -2\min_{0 \leq z \leq 1}\frac{ |y_r \, x_r^\top \theta(z)| }{\sigma^2} \right)} \right\} \\
&\le \frac{d}{N} \sum_{r \in \cR_1(\Delta)} \min\left\{ |y_r| , \, 4 \eta \frac{y_r^2}{\sigma^2}  \right\}
\le \frac{d}{N}  \min\bigg\{ \sum_{r \in \cR_1(\Delta)} |y_r| , \, \frac{4 \eta}{\sigma^2} \sum_{r \in \cR_1(\Delta)} y_r^2 \bigg\} .
\end{align*}
Plugging the bounds from Lemma~\ref{lem:high-prob-bds} into the above then yields
\begin{align}
&\frac{d}{N} \sum_{r \in \cR_1(\Delta)} \min\left\{ |y_r| , \, 2 \frac{y_r^2 \, |x_r^\top(\theta - \theta^*)|}{\sigma^2}\,\exp {\left( -2\min_{0 \leq z \leq 1}\frac{ |y_r \, x_r^\top \theta(z)| }{\sigma^2} \right)} \right\} \notag \\
&\le C \frac{d}{N}  \bigg( |S_1(\Delta)| \frac{N}{d^2} + \log\frac d\delta \bigg) \min\left\{ \Delta , \, \frac{4 \eta}{\sigma^2} \Delta^2  \right\} .
\label{eq:bad-part}
\end{align}

Lastly, combining \eqref{eq:good-part} and \eqref{eq:bad-part} with \eqref{eq:int-bd} gives
$$
|\bar Q(\theta)_1 - \bar Q(\theta')_1|
\leq \frac{\lambda}{2} \eta + C \bigg( \frac{|S_1(\Delta)|}{d} + \frac dN \log\frac d\delta \bigg) \min\bigg\{ \Delta , \, \frac{4 \eta}{\sigma^2} \Delta^2  \bigg\} .
$$
Since $\Delta = \max\{ \frac \eta \lambda , \frac{3\sigma}{\sqrt{\lambda}} \}$, we have
$$
\min\bigg\{ \Delta , \, \frac{4 \eta}{\sigma^2} \Delta^2  \bigg\}
\le \max\bigg\{ \frac \eta \lambda , \, \frac{4 \eta}{\sigma^2} \Big( \frac{3\sigma}{\sqrt{\lambda}} \Big)^2  \bigg\}
\le 36 \frac \eta \lambda.
$$
The above two bounds together with the assumptions $N \ge C' \frac{d}{\lambda^2} \log \frac d \delta$ and $|S_1(\Delta)| \le \frac{\lambda^2}{C'} d$ yield
$$
|\bar Q(\theta)_1 - \bar Q(\theta')_1|
\leq \frac{\lambda}{2} \eta + C \bigg( \frac{\lambda^2}{C'} + \frac{\lambda^2}{C'} \bigg) \cdot 64 \frac \eta \lambda \le \lambda \eta ,
$$
once $C'$ is chosen to be sufficiently large.

For the final statement of the lemma, note that $\hat Q(\theta) = \hat \Sigma^\dag \bar Q(\theta)$, so we have
\[\|\hat Q(\theta) - \hat Q(\theta')\|_\infty \leq \|\hat \Sigma^\dag\|_\infty\|\bar Q(\theta) - \bar Q(\theta')\|_\infty,\]
from which the conclusion follows.
\end{proof}

We summarize the convergence results for the EM sequence in the following proposition.

\begin{proposition}
\label{prop:em-contraction}
There exist absolute constants $C, C' > 0$ such that the following holds for any $\delta \in (0,0.1)$ and any integer $L \ge 2$.
Fix $\theta^{(0)} \in \cH$ and let $\Delta^{(0)} := \max\{ 4L \|\theta^{(0)} - \theta^*\|_\infty, 6 \sqrt{L} \, \sigma \}$. 
Suppose that
\begin{subequations}
\label{eq:cond-samp-init}
\begin{align}
&N \ge C \max \Big\{ L^2 d^{\frac{L}{L-1}} \log \frac{dL}{\delta} , L^3 d \log \frac{L}{\delta} \Big\} , \label{eq:cond-1-sample-size} \\
&\max_{i \in [d]} |S_i(\Delta^{(0)})| \le \frac{d}{C L^2} , \label{eq:cond-2-s-i-delta}
\end{align}
\end{subequations}
where $S_i(\cdot)$ is defined in \eqref{def:S_i}.
Let $\{\theta^{(t)}\}_{t \ge 0}$ be the EM iterates defined in \eqref{eq:iteration-em}. 
Moreover, let 
$$
\tau := C' \sigma \sqrt{\frac{L\,d}{N}\log\frac d\delta} , \qquad
T := \max\bigg\{ 0, \bigg\lceil \log_{4/3} \bigg( \frac{\|\theta^{(0)} - \theta^*\|_\infty}{4\tau} \bigg) \bigg\rceil \bigg\} . 
$$
Then it holds with probability at least $1-\delta$ that 
\begin{itemize}
\item
there exists $\hat \theta \in \cH$ such that $\hat Q(\hat \theta) = \hat \theta$ and $\|\hat \theta - \theta^*\|_\infty \le 4 \tau$;

\item
the sequence $\{\theta^{(t)}\}_{t \ge 0}$ converges to $\hat \theta$;

\item
$\| \theta^{(T+t)} - \hat \theta \, \|_\infty \le 8 \tau / 2^t$ for all $t \ge 0$.
\end{itemize}
\end{proposition}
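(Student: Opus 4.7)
The plan is to reduce the three claims to the deterministic contraction in Lemma~\ref{lem:easy-contraction} combined with Banach's fixed-point theorem on a small $\ell_\infty$-ball around $\theta^*$. First, I would condition on the high-probability event (of probability at least $1-\delta$ by a union bound and the sample-size assumption \eqref{eq:cond-1-sample-size}) on which all three of the following hold simultaneously: (i) the preliminary bounds of Lemma~\ref{lem:high-prob-bds}; (ii) $\|\hat\Sigma^\dag\|_\infty \le 2L$ from Proposition~\ref{prop:sigma-dag-bound}; and (iii) $\|\hat Q(\theta^*) - \theta^*\|_\infty \le \tau$ from Lemma~\ref{lem:theta_infty_bd}, where the absolute constant $C'$ in the definition of $\tau$ is chosen to absorb the one from that lemma. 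I then set $\lambda = 1/(4L)$ in Lemma~\ref{lem:easy-contraction}, so that its in-lemma separation scale becomes $\Delta = \max\{4L\|\theta-\theta'\|_\infty,\,6\sqrt{L}\sigma\}$. The hypotheses $N \gtrsim d/\lambda^2 \log(d/\delta)$ and $\max_i |S_i(\Delta)| \lesssim \lambda^2 d$ then reduce directly to \eqref{eq:cond-1-sample-size} and \eqref{eq:cond-2-s-i-delta} whenever $\Delta \le \Delta^{(0)}$, and since $\|\hat\Sigma^\dag\|_\infty \le 2L = 1/(2\lambda)$, the second conclusion of the lemma gives the crisp contraction $\|\hat Q(\theta)-\hat Q(\theta')\|_\infty \le \tfrac12\|\theta-\theta'\|_\infty$.

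Next, I would prove by induction that $\|\theta^{(t)} - \theta^*\|_\infty \le \max\{(3/4)^t\|\theta^{(0)}-\theta^*\|_\infty,\,4\tau\}$ for all $t \ge 0$. The inductive step applies Lemma~\ref{lem:easy-contraction} to the pair $(\theta^{(t)}, \theta^*)$; combining the resulting $\tfrac12$-contraction with the triangle inequality and $\|\hat Q(\theta^*)-\theta^*\|_\infty \le \tau$ yields
\[
\|\theta^{(t+1)} - \theta^*\|_\infty \;\le\; \tfrac12\|\theta^{(t)}-\theta^*\|_\infty + \tau .
\]
If $\|\theta^{(t)}-\theta^*\|_\infty \ge 4\tau$ this is bounded by $\tfrac34\|\theta^{(t)}-\theta^*\|_\infty$---this is the source of the $\log_{4/3}$ factor in the definition of $T$; if instead the norm is already below $4\tau$, it remains bounded by $3\tau < 4\tau$. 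Because the norm is non-increasing once it exceeds $4\tau$ and stays below $4\tau$ otherwise, the scale $\Delta$ actually used at step $t$ never exceeds $\Delta^{(0)}$, so \eqref{eq:cond-2-s-i-delta} is available at every step (using \eqref{eq:cond-1-sample-size} to control $\tau$ against $\sqrt{L}\sigma$ when needed). After $T$ steps we therefore have $\|\theta^{(T)}-\theta^*\|_\infty \le 4\tau$.

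For the fixed-point claim, I would apply Banach's theorem to $\hat Q$ restricted to the closed set $B := \{\theta\in\cH : \|\theta-\theta^*\|_\infty \le 4\tau\}$. Forward invariance $\hat Q(B)\subseteq B$ follows from the same triangle inequality used above. For the $\tfrac12$-contraction of $\hat Q$ on $B$, I apply Lemma~\ref{lem:easy-contraction} to arbitrary $\theta,\theta'\in B$; the key observation is that $\tau \ll \sqrt{L}\sigma$ under \eqref{eq:cond-1-sample-size} (since $\tau/(\sqrt L \sigma) \asymp \sqrt{d/N \cdot \log(d/\delta)}$), so $\Delta \ge 6\sqrt{L}\sigma \ge 24\tau$, and the hypothesis $\max\{\|\theta-\theta^*\|_\infty,\|\theta'-\theta^*\|_\infty\} \le \Delta/6$ holds automatically. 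Banach's theorem then produces a unique fixed point $\hat\theta\in B$, giving the first bullet, and ensures geometric convergence of $\{\theta^{(t)}\}$ to $\hat\theta$ at rate $1/2$. Combining the triangle bound $\|\theta^{(T)}-\hat\theta\|_\infty \le \|\theta^{(T)}-\theta^*\|_\infty + \|\hat\theta-\theta^*\|_\infty \le 8\tau$ with iterated contraction yields the third bullet $\|\theta^{(T+t)}-\hat\theta\|_\infty \le 8\tau/2^t$.

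The main technical obstacle is the bookkeeping tying the dynamic scale $\Delta$ in Lemma~\ref{lem:easy-contraction} to the static separation hypothesis \eqref{eq:cond-2-s-i-delta}, which is stated only at $\Delta^{(0)}$. I need $\Delta$ to be simultaneously (a) never larger than $\Delta^{(0)}$, so that the separation condition remains valid along the trajectory and on the ball $B$, and (b) at least six times the distance of the iterates from $\theta^*$, so that Lemma~\ref{lem:easy-contraction} is genuinely applicable. Reconciling (a) and (b) is exactly where the specific factors $4L$ and $6\sqrt L$ in the definition of $\Delta^{(0)}$, and the sample-size assumption \eqref{eq:cond-1-sample-size} (which forces $\tau \ll \sqrt L\sigma$), enter; once this bookkeeping is laid out, the remainder is a standard Banach-style fixed-point argument plus triangle inequalities.
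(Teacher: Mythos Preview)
Your proposal is correct and follows essentially the same approach as the paper's proof: conditioning on the intersection of the events from Lemma~\ref{lem:high-prob-bds}, Proposition~\ref{prop:sigma-dag-bound}, and Lemma~\ref{lem:theta_infty_bd}; invoking Lemma~\ref{lem:easy-contraction} with $\lambda = 1/(4L)$; running an induction that maintains $\Delta^{(t)} \le \Delta^{(0)}$ and the recursion $\|\theta^{(t+1)}-\theta^*\|_\infty \le \tfrac12\|\theta^{(t)}-\theta^*\|_\infty + \tau$; and then applying Banach's fixed-point theorem on the ball $\{\theta\in\cH:\|\theta-\theta^*\|_\infty\le 4\tau\}$, using the sample-size condition to force $L\tau \lesssim \sqrt{L}\sigma$ so that $\Delta = 6\sqrt{L}\sigma$ on that ball. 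The paper organizes these into an explicit ``Part~I / Part~II'' split, but the logic, the constants, and the bookkeeping issue you flag are all the same.
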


\begin{proof}
By Proposition~\ref{prop:sigma-dag-bound} and the assumption on $N$, it holds with probability at least $1-\delta/3$ that
$$
\| \hat \Sigma^\dag \|_\infty \le 2 L .
$$
By Lemma~\ref{lem:theta_infty_bd}, it holds with probability at least $1-\delta/3$ that 
\begin{equation}
\|\hat{Q}(\theta^*) - \theta^*\|_\infty \leq C' \sigma \sqrt{\frac{L\,d}{N}\log\frac d\delta} = \tau
\label{eq:bd-tau}
\end{equation}
for a constant $C' > 0$. 
Applying Lemma~\ref{lem:high-prob-bds} and Lemma~\ref{lem:easy-contraction} with $\lambda = \frac{1}{4L}$, we see that with probability at least $1 - \delta/3$, the following holds:
For all $\theta, \theta' \in \cH$ and $\Delta := \max\{ 4L \|\theta - \theta'\|_\infty, 6 \sqrt{L} \, \sigma \}$ such that
\begin{equation}
\max\{ \|\theta - \theta^*\|_\infty, \, \|\theta' - \theta^*\|_\infty \} \le \frac{\Delta}{6} ,
\qquad
\max_{i \in [d]} |S_i(\Delta)| \le \frac{d}{C L^2} ,
\label{eq:int-assum}
\end{equation}
we have
\begin{equation}
\|\hat Q(\theta) - \hat Q(\theta')\|_\infty
\le \frac 12 \|\theta - \theta'\|_\infty .
\label{eq:int-concl}
\end{equation}
In the sequel, we condition on the event of probability at least $1-\delta$ that all the above bounds hold. 
We split the rest of the proof into two parts as we will use the above conclusion twice.

\paragraph{Part I:} Convergence to a neighborhood of the ground truth. 
Let us take $\theta = \theta^{(t)}$ for $t \ge 0$ and $\theta' = \theta^*$ in \eqref{eq:int-assum} and \eqref{eq:int-concl}.
Let $\Delta^{(t)} := \max\{ 4L \|\theta^{(t)} - \theta^*\|_\infty, 6 \sqrt{L} \, \sigma \}$. 
Note that we have $\|\theta^{(t)} - \theta^*\|_\infty \le \frac{4L}{6} \|\theta^{(t)} - \theta^*\|_\infty \le \frac{\Delta^{(t)}}{6}$, so the first condition in \eqref{eq:int-assum} holds. 
We now show inductively that, for $t \ge 0$, 
\begin{equation}
\Delta^{(t)} \le \Delta^{(0)} ,
\label{eq:ind-cond-1}
\end{equation}
and furthermore,
\begin{equation}
\| \theta^{(t+1)} - \theta^* \|_\infty \le \frac 12 \|\theta^{(t)} - \theta^*\|_\infty + \tau .
\label{eq:ind-cond-2}
\end{equation}

First, \eqref{eq:ind-cond-1} is trivial for $t = 0$. 
Suppose that \eqref{eq:ind-cond-1} holds. 
Then, by $\max_{i \in [d]} |S_i(\Delta^{(0)})| \le \frac{d}{C L^2}$ and that $S_i(\cdot)$ is a nondecreasing function defined in \eqref{def:S_i}, we see that $\max_{i \in [d]} |S_i(\Delta^{(t)})| \le \frac{d}{C L^2}$. 
Hence \eqref{eq:int-assum} is satisfied, and then \eqref{eq:int-concl} implies
$$
\| \theta^{(t+1)} - \hat Q(\theta^*)\|_\infty \le \frac 12 \|\theta^{(t)} - \theta^*\|_\infty .
$$
Combining this bound with \eqref{eq:bd-tau} gives \eqref{eq:ind-cond-2}. 

Next, suppose that \eqref{eq:ind-cond-2} holds. 
If $\|\theta^{(t)} - \theta^*\|_\infty \ge 2 \tau$, then \eqref{eq:ind-cond-2} implies that $\| \theta^{(t+1)} - \theta^* \|_\infty \le \| \theta^{(t)} - \theta^* \|_\infty$. 
As a result, $\Delta^{(t+1)} \le \Delta^{(t)}$, so \eqref{eq:ind-cond-1} holds for $t+1$ in place of $t$. 
On the other hand, if $\|\theta^{(t)} - \theta^*\|_\infty \le 2 \tau$, then \eqref{eq:ind-cond-2} implies that $\| \theta^{(t+1)} - \theta^* \|_\infty \le 2 \tau$. 
By the definition of $\tau$ and our assumption on $N$, it is clear that $4L \| \theta^{(t+1)} - \theta^* \|_\infty \le 8 L \tau \le 6 \sqrt{L} \, \sigma$. 
As a result, $\Delta^{(t+1)} \le 6 \sqrt{L} \, \sigma \le \Delta^{(0)}$, so again \eqref{eq:ind-cond-1} holds for $t+1$ in place of $t$. 
This completes the induction.

To conclude this part, note that by \eqref{eq:ind-cond-2}, the EM iterates $\{\theta^{(t)}\}_{t \ge 0}$ satisfy
\begin{equation}
\| \theta^{(t+1)} - \theta^* \|_\infty \le \frac 34 \|\theta^{(t)} - \theta^*\|_\infty 
\label{eq:part-1-concl}
\end{equation}
if $\|\theta^{(t)} - \theta^*\|_\infty > 4 \tau$.
Moreover, define 
$$
\cB := \{ \theta \in \cH : \|\theta - \theta^*\|_\infty \le 4 \tau \} , \qquad
T_1 := \min \big\{ t \ge 0 : \theta^{(t)} \in \cB \big\} .
$$
By \eqref{eq:part-1-concl}, we have
$$
T_1 \le \max\bigg\{ 0, \bigg\lceil \log_{4/3} \bigg( \frac{\|\theta^{(0)} - \theta^*\|_\infty}{4\tau} \bigg) \bigg\rceil \bigg\} = T .
$$
Finally, $\theta^{(t)} \in \cB$ for all $t \ge T_1$ by \eqref{eq:ind-cond-2}.

\paragraph{Part II:} Convergence to a fixed point. 
We now focus on the set $\cB$ defined above. 
Fix any $\theta, \theta' \in \cB$. 
We have $\|\theta - \theta'\|_\infty \le 8 \tau$. 
By the definition of $\tau$ and our assumption on $N$, it holds that $4L \| \theta - \theta' \|_\infty \le 16 L \tau \le 6 \sqrt{L} \, \sigma$. 
Hence, $\Delta = \max\{ 4L \|\theta - \theta'\|_\infty, 6 \sqrt{L} \, \sigma \} = 6 \sqrt{L} \, \sigma \le \Delta^{(0)}$. 
By our assumption on $\Delta^{(0)}$ and that $S_i(\cdot)$ is a nondecreasing function, we obtain $\max_{i \in [d]} |S_i(\Delta)| \le \frac{d}{C L^2}$. 
In addition, $\max\{ \|\theta - \theta^*\|_\infty, \, \|\theta' - \theta^*\|_\infty \} \le 4 \tau \le \sqrt{L} \, \sigma = \frac{\Delta}{6}$.
Therefore, \eqref{eq:int-assum} is satisfied, and so \eqref{eq:int-concl} holds.

Similar to Part~I, taking $\theta' = \theta^*$ in \eqref{eq:int-concl}, we obtain
$$
\|\hat Q(\theta) - \theta^*\|_\infty 
\le \|\hat Q(\theta) - \hat Q(\theta^*)\|_\infty + \|\hat Q(\theta^*) - \theta^*\|_\infty
\le \frac 12 \|\theta - \theta^*\|_\infty + \tau 
\le 4 \tau ,
$$
so the EM operator $\hat Q$ can be viewed as a map on $\cB$. 
Furthermore, \eqref{eq:int-concl} says that $\hat Q$ is a contraction on $\cB$. 
By the Banach fixed-point theorem, $\hat Q$ has a unique fixed point $\hat \theta$ in $\cB$, and the EM sequence $\{\theta^{(t)}\}_{t \ge 0}$ converges to $\hat \theta$ with
$$
\|\theta^{(t+1)} - \hat \theta \, \|_\infty
\le \frac 12 \, \|\theta^{(t)} - \hat \theta \, \|_\infty 
$$
for all $t \ge T_1$. 
Since $\|\theta^{(T_1)} - \hat \theta\|_\infty \le 8 \tau$, the conclusion follows immediately.
\end{proof}

\subsection{Sharp results in the low-noise regime}
\label{sec:low-noise}

Before proving the sharp bound on $\|\hat \theta - \theta^*\|_2$, let us start with two lemmas that control the lower order terms.

\begin{lemma}
\label{lem:A-infty-norm-bd}
There exist absolute constants $C, C' > 0$ such as the following holds for any $\delta \in (0,0.1)$. 
Suppose that all the bounds in Lemma~\ref{lem:high-prob-bds} hold with the constant $C$, and that $|y_r - x_r^\top \theta^*| \le C \sigma \sqrt{\log \frac{N}{\delta}}$ for all $r \in [N]$. 
Define
\begin{equation}
A := \frac{d-1}{2N}\sum_{r = 1}^N\frac{y_r^2}{\sigma^2}\tanh' \bigg(\frac{y_r x_r^\top \theta^*}{\sigma^2} \bigg) x_r x_r^\top .
\label{eq:def-A}
\end{equation}
Moreover, let $\Delta := 2 C \sigma \sqrt{\log \frac{N}{\delta}}$, and define $S_i(\Delta)$ as in \eqref{def:S_i}. 
Then we have
$$
\|A\|_\infty \le C' \bigg( \max_{i \in [d]} \frac{|S_i(\Delta)|}{d} + \frac{d}{N}  \log\frac d\delta \bigg) \log \frac{N}{\delta} .
$$
\end{lemma}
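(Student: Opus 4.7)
\textbf{Proof plan for Lemma~\ref{lem:A-infty-norm-bd}.}

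First I would reduce the $\|\cdot\|_\infty$ operator norm on $\cH$ to a simple row-sum of entry magnitudes. Since $x_r^\top \bone = 0$, each rank-one summand $x_r x_r^\top$ annihilates $\bone$, so $A \bone = 0$ and $A$ preserves $\cH$. Consequently
$$\|A\|_\infty \;\le\; \max_{i\in[d]} \sum_{j\in[d]} |A_{ij}|.$$
Next, observe that $\tanh'(\cdot)\ge 0$ and $y_r^2\ge 0$, so $A_{ij} = -\frac{d-1}{2N}\sum_{r:\,x_r = \pm(e_i-e_j)} \frac{y_r^2}{\sigma^2}\tanh'\!\bigl(\tfrac{y_r x_r^\top\theta^*}{\sigma^2}\bigr) \le 0$ for $i \ne j$, while $A_{ii} = \frac{d-1}{2N}\sum_{r\in\cR_i} \frac{y_r^2}{\sigma^2}\tanh'\!\bigl(\tfrac{y_r x_r^\top\theta^*}{\sigma^2}\bigr) \ge 0$. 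Combined with $\sum_j A_{ij} = (A\bone)_i = 0$, this gives $\sum_j|A_{ij}| = 2A_{ii}$, and it suffices to prove the claimed bound on $A_{ii}$ for each $i$.

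The main step is then to split $\cR_i = \cR_i(\Delta) \sqcup (\cR_i\setminus \cR_i(\Delta))$ and control each piece separately. For $r\in\cR_i(\Delta)$ I will use only the trivial bound $\tanh'\le 1$, giving
$$\sum_{r\in\cR_i(\Delta)} \frac{y_r^2}{\sigma^2}\tanh'\!\bigl(\tfrac{y_r x_r^\top\theta^*}{\sigma^2}\bigr) \;\le\; \frac{1}{\sigma^2}\sum_{r\in\cR_i(\Delta)} y_r^2 \;\le\; \frac{C\Delta^2}{\sigma^2}\Bigl(|S_i(\Delta)|\tfrac{N}{d^2} + \log\tfrac{d}{\delta}\Bigr)$$
by Lemma~\ref{lem:high-prob-bds}. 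Since $\Delta^2/\sigma^2 \asymp \log(N/\delta)$ by definition of $\Delta$, multiplying by $\frac{d-1}{2N}$ yields a contribution to $A_{ii}$ of order $\log(N/\delta)\bigl(|S_i(\Delta)|/d + (d/N)\log(d/\delta)\bigr)$, which is the target bound.

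The remaining contribution from $r \in \cR_i\setminus \cR_i(\Delta)$ must be shown to be a lower-order term, and this is where the exponential decay of $\tanh'$ does the work. For such $r$ one has $|x_r^\top\theta^*|>\Delta = 2C\sigma\sqrt{\log(N/\delta)}$, while the noise bound gives $|\epsilon_r|\le\Delta/2$, so $|y_r|\ge |x_r^\top\theta^*|/2$ and hence $|y_r x_r^\top\theta^*|/\sigma^2 \ge (x_r^\top\theta^*)^2/(2\sigma^2)$. Using $\tanh'(t)\le 4e^{-2|t|}$ and the corresponding upper bound $y_r^2 \le 3(x_r^\top\theta^*)^2$ (valid because $(x_r^\top\theta^*)^2$ dominates $\epsilon_r^2$ on this set), each summand is at most $\frac{12(x_r^\top\theta^*)^2}{\sigma^2} \exp\!\bigl(-(x_r^\top\theta^*)^2/\sigma^2\bigr)$. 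The function $s\mapsto 12 s e^{-s}$ is decreasing on $s\ge 1$ and $s = (x_r^\top\theta^*)^2/\sigma^2 \ge 4C^2\log(N/\delta)$, so each term is bounded by $O(\log(N/\delta))\cdot (\delta/N)^{4C^2}$. Taking $C$ large enough and using $|\cR_i|\le 2.1N/d$ from Lemma~\ref{lem:high-prob-bds}, the total contribution to $A_{ii}$ from this range is $O((\log(N/\delta))/N)$, which is absorbed into the main term. The straightforward but slightly delicate piece will be juggling the constants in the exponential tail so that the $|\cR_i|$ factor is absorbed; everything else reduces to the two ingredients already at hand (Lemma~\ref{lem:high-prob-bds} and the elementary inequality $\tanh'(t)\le 4e^{-2|t|}$).
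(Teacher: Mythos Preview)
Your proposal is correct and essentially identical to the paper's proof: the paper also reduces to the row sum $\sum_j |A_{ij}| = \frac{d-1}{N}\sum_{r\in\cR_i}\frac{y_r^2}{\sigma^2}\tanh'(\cdot)$ (your $2A_{ii}$), splits $\cR_i$ into $\cR_i(\Delta)$ and its complement, handles the near part via the Lemma~\ref{lem:high-prob-bds} bound on $\sum_{r\in\cR_i(\Delta)} y_r^2$, and kills the far part using $\tanh'(t)\le 4e^{-2|t|}$, $|y_r x_r^\top\theta^*|/\sigma^2 \ge (x_r^\top\theta^*)^2/(2\sigma^2)$, and the monotonicity of $s\mapsto s e^{-s}$. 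The only cosmetic difference is that the paper bounds the far contribution by $1/N^{10}$ rather than tracking it as $O((\log(N/\delta))/N)$.
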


\begin{proof}
We have $\|A\|_\infty = \max_{i \in [d]} \sum_{j=1}^d |A_{ij}|$. 
If $x_r = e_i - e_j$, then $x_r x_r^\top$ is the matrix with entries $(x_r x_r^\top)_{ii} = (x_r x_r^\top)_{jj} = 1$, $(x_r x_r^\top)_{ij} = (x_r x_r^\top)_{ji} = -1$, and $0$ elsewhere. 
Also, note that $\frac{y_r^2}{\sigma^2}\tanh' \big(\frac{y_r x_r^\top \theta^*}{\sigma^2} \big) \ge 0$.
Then, by the definition of $A$, it is not hard to see that 
\begin{equation}
\sum_{j=1}^d |A_{ij}|
= \frac{d-1}{N}\sum_{r \in \cR_i} \frac{y_r^2}{\sigma^2}\tanh' \bigg(\frac{y_r x_r^\top \theta^*}{\sigma^2} \bigg) 
\le \frac{4d}{N}\sum_{r \in \cR_i} \frac{y_r^2}{\sigma^2} \exp \bigg(-2 \, \bigg| \frac{y_r x_r^\top \theta^*}{\sigma^2} \bigg| \bigg) 
\label{eq:A-row-sum-bd}
\end{equation}
where $\cR_i$ is defined in \eqref{eq:def-ri}, and the inequality follows from the fact $0 \le \tanh'(t) = (\cosh(t))^{-2} \le 4 \exp(-2|t|)$ for all $t \in \R$.

For $\Delta = 2 C \sigma \sqrt{\log \frac{N}{\delta}}$, let $\cR_i(\Delta)$ be defined in \eqref{def:R_i}. 
Similar to the proof of Lemma~\ref{lem:easy-contraction}, we split the analysis of the sum in \eqref{eq:A-row-sum-bd} into two parts. 
First, let us consider $r \in \cR_i \setminus \cR_i(\Delta)$. 
Then $|x_r^\top \theta^*| > \Delta$, and by the assumption $|y_r - x_r^\top \theta^*| \le C \sigma \sqrt{\log \frac{N}{\delta}} = \Delta/2$, we have
$$
\bigg| \frac{y_r x_r^\top \theta^*}{\sigma^2} \bigg| 
\ge \frac{(x_r^\top \theta^*)^2}{2 \sigma^2} .
$$
Consequently,
\begin{align*}
\frac{4d}{N} \sum_{r \in \cR_i \setminus \cR_i(\Delta)} \frac{y_r^2}{\sigma^2} \exp \bigg(-2 \, \bigg| \frac{y_r x_r^\top \theta^*}{\sigma^2} \bigg| \bigg) 
&\le \frac{4d}{N} \, N \, \frac{4 (x_r^\top \theta^*)^2}{\sigma^2} \exp \bigg( - \frac{(x_r^\top \theta^*)^2}{\sigma^2} \bigg) \\
&\stackrel{(a)}{\le} 16 \, d \, \frac{\Delta^2}{\sigma^2} \exp \bigg( - \frac{\Delta^2}{\sigma^2} \bigg)
\stackrel{(b)}{\le} \frac{1}{N^{10}} ,
\end{align*}
where $(a)$ holds as the function $t \exp(-t)$ is decreasing for $t \ge 1$, and $(b)$ holds by the definition of $\Delta$ provided that the constant $C$ is sufficiently large. 

Next, consider $r \in \cR_i(\Delta)$. We have 
\begin{align*}
\frac{4d}{N}\sum_{r \in \cR_i(\Delta)} \frac{y_r^2}{\sigma^2} \exp \bigg(-2 \, \bigg| \frac{y_r x_r^\top \theta^*}{\sigma^2} \bigg| \bigg)
&\le \frac{4d}{\sigma^2 N} \sum_{r \in \cR_i(\Delta)} y_r^2 \\
&\le \frac{4d}{\sigma^2 N} C \Delta^2 \left( |S_i(\Delta)| \frac{N}{d^2} + \log\frac d\delta \right) \\
&= 16 C^3 \bigg( \frac{|S_i(\Delta)|}{d} + \frac{d}{N}  \log\frac d\delta \bigg) \log \frac{N}{\delta} ,
\end{align*}
where the second inequality follows from Lemma~\ref{lem:high-prob-bds}. 

Finally, combining the above two bounds with \eqref{eq:A-row-sum-bd} finishes the proof.
\end{proof}

\begin{lemma}\label{lem:taylor_error}
There exist absolute constants $C, C' > 0$ such as the following holds for any $\delta \in (0,0.1)$. 
Suppose that all the bounds in Lemma~\ref{lem:high-prob-bds} hold with the constant $C$, and that $|y_r - x_r^\top \theta^*| \le C \sigma \sqrt{\log \frac{N}{\delta}}$ for all $r \in [N]$. 
For $\theta \in \cH$, $t \in [0,1]^d$, and $r \in [N]$, define
\begin{subequations}
\begin{align}
\omega(\theta,r,t_r) &:= \frac{y_r^2}{2 \sigma^4}\,\tanh''\left(\frac{y_r x_r^\top (\theta^* + t_r (\theta - \theta^*))}{\sigma^2}\right) (x_r^\top(\theta - \theta^*))^2 , 
\label{eq:err-term} \\
\omega(\theta,t) &:= \frac{d-1}{2N}\sum_{r = 1}^N\omega(\theta,r,t_r)\,y_r\,x_r .
\label{eq:err-term-sum}
\end{align}
\label{eq:err-term-all}
\end{subequations}
Suppose further that $\|\theta - \theta^*\|_\infty \le \frac{C}{4} \sigma \sqrt{\log \frac{N}{\delta}}$. 
Then we have
\[\|\omega(\theta,t)\|_\infty \le \frac{C'}{\sigma} \|\theta - \theta^*\|_\infty^2 \Big(\log \frac{N}{\delta}\Big)^{3/2} .\]
\end{lemma}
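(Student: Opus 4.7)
The plan is to apply a two-regime decomposition of the sum defining $\omega(\theta,t)$. Fix $i \in [d]$; since $(x_r)_i = \pm 1$ for $r \in \cR_i$ and $0$ otherwise, I first extract $(x_r^\top(\theta-\theta^*))^2 \le 4\|\theta-\theta^*\|_\infty^2$ from each summand to get a bound of the form
$$|[\omega(\theta,t)]_i| \le \frac{C''}{N}\|\theta-\theta^*\|_\infty^2 \sum_{r \in \cR_i} \frac{|y_r|^3}{\sigma^4}\,|\tanh''(\tilde z_r)|,$$
where $\tilde z_r := y_r x_r^\top(\theta^* + t_r(\theta-\theta^*))/\sigma^2$. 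The two available bounds $|\tanh''(z)| \le 2$ and $|\tanh''(z)| \le 8e^{-2|z|}$ will be exploited on different parts of $\cR_i$, partitioned by the natural threshold $\Delta := 2C\sigma\sqrt{\log(N/\delta)}$.

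On $\cR_i(\Delta)$, the noise bound $|y_r - x_r^\top\theta^*| \le C\sigma\sqrt{\log(N/\delta)} = \Delta/2$ combined with $|x_r^\top\theta^*| \le \Delta$ yields $|y_r| \le \tfrac{3}{2}\Delta$, so one can factor out $\max_{r \in \cR_i(\Delta)} y_r^2 = O(\sigma^2 \log(N/\delta))$ and invoke the first-moment bound $\sum_{r \in \cR_i(\Delta)} |y_r| \le C\Delta(|S_i(\Delta)| N/d^2 + \log(d/\delta))$ from Lemma~\ref{lem:high-prob-bds}. Using the crude bound $|S_i(\Delta)|/d \le 1$ and the built-in $d\log(d/\delta)/N = O(1)$ from the hypothesis of Lemma~\ref{lem:high-prob-bds}, this contribution matches the target order $\frac{C'}{\sigma}\|\theta-\theta^*\|_\infty^2(\log(N/\delta))^{3/2}$.

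For $r \in \cR_i \setminus \cR_i(\Delta)$, the assumption $\|\theta-\theta^*\|_\infty \le \Delta/8$ together with $|x_r^\top\theta^*| > \Delta$ gives $|x_r^\top(\theta^* + t_r(\theta-\theta^*))| \ge \tfrac{3}{4}|x_r^\top\theta^*|$ and $|y_r| \ge \tfrac{1}{2}|x_r^\top\theta^*|$, whence $|\tilde z_r| \ge 3|x_r^\top\theta^*|^2/(8\sigma^2)$. Writing $s := |x_r^\top\theta^*|/\sigma$ and $|y_r|^3 \le (\tfrac{3}{2}|x_r^\top\theta^*|)^3$, each summand is bounded by $O(\sigma^{-1} s^3 e^{-3s^2/4})$. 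Since the function $s \mapsto s^3 e^{-3s^2/4}$ is monotone decreasing for $s \ge \sqrt 2$ and $s \ge 2C\sqrt{\log(N/\delta)}$ throughout this regime, the evaluation at $s = \Delta/\sigma$ produces a super-polynomially small factor $(\delta/N)^{3C^2}\cdot(\log(N/\delta))^{3/2}$. Summing over $r \in \cR_i$ (at most $|\cR_i| \le 2.1N/d$ terms by Lemma~\ref{lem:high-prob-bds}) shows that this contribution is dominated by the Case~1 bound, provided $C$ is a sufficiently large absolute constant.

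The main technical care lies in coordinating the threshold $\Delta$ so that both regimes work simultaneously: $\Delta$ must be large enough that $\|\theta-\theta^*\|_\infty \le \Delta/8$ (so the shift to $\theta$ does not destroy the size comparison $|x_r^\top\theta^*| \asymp |x_r^\top(\theta^*+t_r(\theta-\theta^*))|$ in Case~2) while simultaneously small enough that $\max_{r\in \cR_i(\Delta)} y_r^2 = O(\sigma^2\log(N/\delta))$ still yields the desired $(\log(N/\delta))^{3/2}$ order in Case~1. Both constraints pin down $\Delta \asymp \sigma\sqrt{\log(N/\delta)}$, and at that scale the exponential in Case~2 automatically absorbs any polynomial-in-$N$ prefactor. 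Taking a maximum over $i \in [d]$ then completes the proof.
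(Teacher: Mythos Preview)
Your proposal is correct and follows the same two-regime decomposition as the paper's proof (splitting $\cR_i$ at $\Delta = 2C\sigma\sqrt{\log(N/\delta)}$); note only that the prefactor you wrote as $C''/N$ should be $C''d/N$ (from the $(d-1)/2N$ in the definition of $\omega(\theta,t)$), which your later bookkeeping with $|S_i(\Delta)|/d$ and $|\cR_i| \le 2.1N/d$ implicitly assumes. The paper's one simplification is in the far regime: rather than extracting super-polynomial decay via monotonicity, it uses the uniform bound $s^3 e^{-s^2/2} \le 2$ so that each summand there is already $O(1)$, which is enough once multiplied by $|\cR_i|\cdot d/N = O(1)$.
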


\begin{proof}
Fix $i \in [d]$. 
By \eqref{eq:err-term-all} and the definition of $\cR_i$ in \eqref{def:R_i}, we have 
$$
|\omega(\theta,t)_i| 
\le \frac{d-1}{2N} \sum_{r \in \cR_i} |\omega(\theta, r, t_r) \, y_r| 
\le \frac{d}{4N} \sum_{r \in \cR_i} \frac{|y_r^3|}{\sigma^4} \left| \tanh''\left(\frac{y_r x_r^\top (\theta^* + t_r (\theta - \theta^*))}{\sigma^2}\right) \right| (x_r^\top(\theta - \theta^*))^2 .
$$
Using that $\left| \tanh''\left(t\right) \right| \le 8 \exp(-2|t|)$ for all $t \in \R$ and $(x_r^\top(\theta - \theta^*))^2 \le 4 \|\theta - \theta^*\|_\infty^2$, we obtain 
\begin{equation}
|\omega(\theta,t)_i| 
\le \frac{8 d}{\sigma N} \|\theta - \theta^*\|_\infty^2 \sum_{r \in \cR_i} \frac{|y_r^3|}{\sigma^3} \exp \left( - 2 \, \left| \frac{y_r x_r^\top (\theta^* + t_r (\theta - \theta^*))}{\sigma^2} \right| \right) .
\label{eq:omega-entry-bd}
\end{equation}

Similar to the proof of the previous lemma, we again split the analysis into two parts. 
Define $\Delta := 2 C \sigma \sqrt{\log \frac{N}{\delta}}$. 
First, consider $r \in \cR_i \setminus \cR_i(\Delta)$. 
Then $|x_r^\top \theta^*| > \Delta$, and by the assumptions $|y_r - x_r^\top \theta^*| \le \Delta/2$ and $\|\theta - \theta^*\|_\infty \le \Delta/4$, we have $t_r |x_r^\top (\theta - \theta^*)| \le 2 \|\theta - \theta^*\|_\infty \le \Delta/2$ and 
$$
\bigg| \frac{y_r x_r^\top (\theta^* + t_r (\theta - \theta^*))}{\sigma^2} \bigg| 
\ge \frac{(x_r^\top \theta^*)^2}{4 \sigma^2} .
$$
It follows that 
$$
\frac{|y_r^3|}{\sigma^3} \exp \left( - 2 \, \left| \frac{y_r x_r^\top (\theta^* + t_r (\theta - \theta^*))}{\sigma^2} \right| \right) 
\le 8 \frac{|x_r^\top \theta^*|^3}{\sigma^3} \exp \left( - \frac{(x_r^\top \theta^*)^2}{2 \sigma^2} \right) 
\le 16 
$$
since the function $t^3 \exp(-t^2/2) \le 2$ for $t \in \R$. 
Next, consider $r \in \cR_i(\Delta)$. 
Then $|x_r^\top\theta^*| \le \Delta$ and so $|y_r| \le 2 \Delta$. 
Hence we have 
\[\frac{|y_r^3|}{\sigma^3} \exp \left( - 2 \, \left| \frac{y_r x_r^\top (\theta^* + t_r (\theta - \theta^*))}{\sigma^2} \right| \right) 
\le 8 \frac{\Delta^3}{\sigma^3} 
= 64 C^3 \Big(\log \frac{N}{\delta}\Big)^{3/2} .\]

Putting the two cases together with \eqref{eq:omega-entry-bd}, we obtain
$$
|\omega(\theta,t)_i| 
\le \frac{8 d}{\sigma N} \|\theta - \theta^*\|_\infty^2 \, |\cR_i| \cdot 64 C^3 \Big(\log \frac{N}{\delta}\Big)^{3/2} 
\le \frac{C'}{\sigma} \|\theta - \theta^*\|_\infty^2 \Big(\log \frac{N}{\delta}\Big)^{3/2} ,
$$
where the second inequality follows from Lemma~\ref{lem:high-prob-bds}. 
This holds for any $i \in [d]$, so the proof is complete.
\end{proof}

With these results in hand, we are now ready to perform a sharp analysis of $\|\hat \theta - \theta^*\|_2$.

\begin{proposition}\label{prop:asymptotics}
There exist absolute constants $C, C' > 0$ such that the following holds for any $\delta \in (0,0.1)$ and any integer $L \ge 2$.
Fix $\theta^{(0)} \in \cH$ and let $\Delta^{(0)} := \max\{ 4L \|\theta^{(0)} - \theta^*\|_\infty, 6 \sqrt{L} \, \sigma \}$. 
Also, let $\Delta := 2 C \sigma \sqrt{\log \frac{N}{\delta}}$. 
Suppose that
\begin{subequations}
\label{eq:prop-2-cond-set}
\begin{align}
&N \ge C \max \Big\{ L^2 d^{\frac{L}{L-1}} \log \frac{dL}{\delta} , L^3 d \log \frac{L}{\delta} \Big\} , \qquad
N \gg L^4 \, d  \left(\log \frac{N}{\delta}\right)^5 ,
\label{eq:condition-N} \\
&\max_{i \in [d]} |S_i(\Delta^{(0)})| \le \frac{d}{C L^2} , \qquad
\max_{i \in [d]} |S_i(\Delta)| = o\left( \frac{d}{(L \log(N/\delta))^{3/2}} \right) ,
\label{eq:condition-Delta}
\end{align}
\end{subequations}
where $S_i(\cdot)$ is defined in \eqref{def:S_i}. 
Let $\hat \theta$ be the limit of the EM sequence $\{\theta^{(t)}\}_{t \ge 0}$ defined in \eqref{eq:iteration-em} as guaranteed by Proposition~\ref{prop:em-contraction}. 
Then we have 
$$
\|\hat \theta - \theta^*\|_2^2 \le (1+o(1)) \, \sigma^2 \frac{d-1}{2N} \tr(\hat \Sigma^\dag) + C' \sigma^2 \left( \frac{d^{3/2}}{N}\sqrt{\log\frac{1}{\delta}}+ \frac{d}{N} \log \frac{1}{\delta} \right) .
$$
\end{proposition}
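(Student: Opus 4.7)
The plan is to expand the EM fixed-point identity $\hat Q(\hat\theta) = \hat\theta$ around $\theta^*$ via Taylor's theorem for $\tanh(\cdot)$, reducing the problem to a sharp bound on $\|\hat Q(\theta^*) - \theta^*\|_2$. First I will condition on the intersection of the high-probability events in Lemma~\ref{lem:high-prob-bds}, Proposition~\ref{prop:op_norm}, Proposition~\ref{prop:sigma-dag-bound}, and Proposition~\ref{prop:em-contraction}, together with the Gaussian tail bound $\max_{r \in [N]}|\eps_r| \le C\sigma\sqrt{\log(N/\delta)}$; under \eqref{eq:prop-2-cond-set}, all of these hold simultaneously with probability at least $1 - O(\delta)$. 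On this event, $\hat\theta$ exists with $\|\hat\theta - \theta^*\|_\infty \le 4\tau \lesssim \sigma\sqrt{Ld\log(d/\delta)/N}$, $\|\hat\Sigma^\dag\|_\infty \le 2L$, and $\|\hat\Sigma^\dag\|_{op} \le 5$.

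Taylor-expanding $\tanh(y_r x_r^\top \hat\theta/\sigma^2)$ about $\tanh(y_r x_r^\top \theta^*/\sigma^2)$ to second order, summing over $r$, and multiplying by $\hat\Sigma^\dag$, I obtain, for some $t \in [0,1]^d$,
\[
\hat\theta - \theta^* = \hat\Sigma^\dag A(\hat\theta - \theta^*) + \hat\Sigma^\dag\,\omega(\hat\theta,t) + \bigl(\hat Q(\theta^*) - \theta^*\bigr),
\]
with $A$ from \eqref{eq:def-A} and $\omega$ from \eqref{eq:err-term-sum}. The two correction terms will be controlled in $\ell_2$ via the crude conversion $\|v\|_2 \le \sqrt{d}\,\|v\|_\infty$ combined with $\|\hat\Sigma^\dag M w\|_\infty \le \|\hat\Sigma^\dag\|_\infty\|M\|_\infty\|w\|_\infty$. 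Lemma~\ref{lem:A-infty-norm-bd} with $\Delta = 2C\sigma\sqrt{\log(N/\delta)}$ and the second clause of \eqref{eq:condition-Delta} bound $\|A\|_\infty$ small enough that
\[
\|\hat\Sigma^\dag A(\hat\theta - \theta^*)\|_2 \le \sqrt{d}\cdot 2L\cdot\|A\|_\infty\cdot 4\tau = o\!\bigl(\sigma d/\sqrt{N}\bigr),
\]
while Lemma~\ref{lem:taylor_error} with $\|\hat\theta - \theta^*\|_\infty \le 4\tau$ gives $\|\omega(\hat\theta,t)\|_\infty \lesssim \sigma L d (\log(N/\delta))^{5/2}/N$, so that, by the second clause of \eqref{eq:condition-N},
\[
\|\hat\Sigma^\dag\,\omega(\hat\theta,t)\|_2 \le \sqrt{d}\cdot 2L\cdot\|\omega(\hat\theta,t)\|_\infty = o\!\bigl(\sigma d/\sqrt{N}\bigr).
\]

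The leading term $\|\hat Q(\theta^*) - \theta^*\|_2^2$ is then handled sharply by combining Lemma~\ref{lem:l2-err-exp}, which gives the conditional expectation bound $\E\bigl[\|\hat Q(\theta^*) - \theta^*\|_2^2\bigr] \le \sigma^2\tfrac{d-1}{2N}\tr(\hat\Sigma^\dag)$, with the concentration estimate of Lemma~\ref{lem:theta_ell2_bd} (which contributes exactly the additive term in the proposition). Since $\tr(\hat\Sigma^\dag) \asymp d$ by Lemma~\ref{lem:trace-inv-lower} and Proposition~\ref{prop:op_norm}, the leading piece has order $\sigma d/\sqrt{N}$; squaring the triangle inequality $\|\hat\theta - \theta^*\|_2 \le \|\hat Q(\theta^*) - \theta^*\|_2 + (\text{corrections})$ and absorbing the $o(\sigma d/\sqrt{N})$ corrections and the resulting cross terms into a factor $(1+o(1))$ multiplying the leading piece delivers the claimed bound.

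The principal obstacle is the $\sqrt{d}$ loss incurred when converting $\ell_\infty$-type control of $\hat\Sigma^\dag$, $A$, and $\omega$ into an $\ell_2$ bound on the two corrections: obtaining the sharp constant $(1+o(1))$, rather than merely an optimal rate up to constants, requires that $\|A\|_\infty$ and the Taylor remainder $\|\omega\|_\infty$ each beat the product $\sqrt{d}\cdot L\cdot\tau$ by a genuine $o(1)$ margin. The second clauses of \eqref{eq:condition-N} and \eqref{eq:condition-Delta} are calibrated exactly to deliver this margin via Lemmas~\ref{lem:A-infty-norm-bd} and~\ref{lem:taylor_error}, and care will be needed to track the precise polylogarithmic and $L$-dependence through each of these bounds.
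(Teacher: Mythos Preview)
Your proposal is correct and follows essentially the same route as the paper's proof: condition on the intersection of the high-probability events, Taylor-expand the fixed-point identity to obtain $\hat\theta - \theta^* = (\hat Q(\theta^*) - \theta^*) + \hat\Sigma^\dag A(\hat\theta - \theta^*) + \hat\Sigma^\dag\omega(\hat\theta,t)$, control the two correction terms in $\ell_\infty$ via Lemmas~\ref{lem:A-infty-norm-bd} and~\ref{lem:taylor_error} and convert to $\ell_2$ via $\|v\|_2 \le \sqrt{d}\,\|v\|_\infty$, then handle the main term sharply with Lemmas~\ref{lem:l2-err-exp} and~\ref{lem:theta_ell2_bd} and absorb the $o(\sigma^2 d^2/N)$ remainder into the $(1+o(1))$ factor using Lemma~\ref{lem:trace-inv-lower}. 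The paper carries out exactly these steps with the same auxiliary lemmas and the same arithmetic.
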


\begin{proof}
There is an event $\cE$ of probability at least $1-\delta$ on which 
\begin{itemize} 
\item
$|y_r - x_r^\top \theta^*| \le C \sigma \sqrt{\log \frac{N}{\delta}}$ for all $r \in [N]$; 

\item
the bounds in Lemma~\ref{lem:high-prob-bds} hold;

\item
$\|\hat\Sigma\|_{op} \leq 3$, $\|\hat\Sigma^\dag\|_{op} \leq 5$, and $\tr(\hat\Sigma^\dag) \ge (d-1)/3$ by Proposition~\ref{prop:op_norm} and Lemma~\ref{lem:trace-inv-lower};

\item
$\|\hat\Sigma^\dag\|_\infty \le 2L$ by Proposition~\ref{prop:sigma-dag-bound};

\item
$\|\hat Q(\theta^*) - \theta^*\|_2^2 \le \sigma^2 \frac{d-1}{2N} \tr(\hat \Sigma^\dag) + C \sigma^2 \left( \frac{d^{3/2}}{N}\sqrt{\log\frac{1}{\delta}}+ \frac{d}{N} \log \frac{1}{\delta} \right)$
by Lemmas~\ref{lem:l2-err-exp} and~\ref{lem:theta_ell2_bd};

\item
$\|\hat \theta - \theta^*\|_\infty \le C \sigma \sqrt{\frac{L\,d}{N}\log\frac d\delta}$ by Proposition~\ref{prop:em-contraction};

\item
$\|A\|_\infty \le C' \left( \max_{i \in [d]} \frac{|S_i(\Delta)|}{d} + \frac{d}{N}  \log\frac d\delta \right) \log \frac{N}{\delta} \le o\left( \frac{1}{L^{3/2} \sqrt{\log(d/\delta)}} \right)$ by Lemma~\ref{lem:A-infty-norm-bd} and the assumptions \eqref{eq:condition-N} and \eqref{eq:condition-Delta};

\item
$\|\omega(\hat\theta,t)\|_\infty \le \frac{C'}{\sigma} \|\hat \theta - \theta^*\|_\infty^2 \left(\log \frac{N}{\delta}\right)^{3/2} \le C' C^2 \sigma \frac{L \, d}{N} \left(\log \frac{N}{\delta}\right)^{5/2} \le o\left( \frac{\sigma}{L} \sqrt{\frac{ d }{ N }} \, \right)$ by Lemma~\ref{lem:taylor_error}, the above bound on $\|\hat \theta - \theta^*\|_\infty$, and \eqref{eq:condition-N}. 
\end{itemize}
Let us condition on the event $\cE$ in the rest of the proof.

Since $\hat \Sigma \hat \theta = \hat \Sigma \hat Q(\hat \theta) = \bar Q(\hat \theta)$, we have 
\begin{align*}
\hat\Sigma(\hat\theta - \theta^*) &= \bar Q(\hat{\theta}) - \bar Q(\theta^*) + \bar Q(\theta^*) - \hat\Sigma\theta^* \\
&= \frac{d-1}{2N}\sum_{r = 1}^N\left(\tanh\left(\frac{y_rx_r^\top \hat\theta}{\sigma^2}\right) - \tanh\left(\frac{y_rx_r^\top \theta^*}{\sigma^2}\right)\right)y_rx_r + \bar Q(\theta^*) - \hat\Sigma\theta^*.
\end{align*}
By Taylor's theorem, it holds that 
$$
\tanh\left(\frac{y_rx_r^\top \hat\theta}{\sigma^2}\right) - \tanh\left(\frac{y_rx_r^\top \theta^*}{\sigma^2}\right)
= \tanh'\left(\frac{y_r x_r^\top \theta^*}{\sigma^2}\right)\frac{y_rx_r^\top(\hat{\theta} - \theta^*)}{\sigma^2} + \omega(\hat\theta, r, t_r),
$$
where the error term $\omega(\hat\theta, r, t_r)$ is defined in \eqref{eq:err-term} for some $t_r \in [0,1]$. 
With this in hand, we have
\begin{align*}
\hat \Sigma (\hat \theta - \theta^*) 
&= \frac{d-1}{2N} \sum_{r=1}^N \tanh'\left(\frac{y_r x_r^\top \theta^*}{\sigma^2}\right)\frac{y_rx_r^\top(\hat{\theta} - \theta^*)}{\sigma^2}  y_r x_r + \omega(\hat \theta, t) + \bar Q(\theta^*) - \hat\Sigma\theta^* \\
&= A (\hat \theta - \theta^*) + \bar Q(\theta^*) - \hat\Sigma\theta^* + \omega(\hat \theta, t) ,
\end{align*}
where $\omega(\hat \theta, t)$ is defined in \eqref{eq:err-term-sum} for $t = (t_1, \dots, t_d)$, and $A$ is defined in \eqref{eq:def-A}. 
It follows that
\begin{equation}
\hat \theta - \theta^*
= \hat Q(\theta^*) - \theta^* + \hat\Sigma^\dag A (\hat \theta - \theta^*) + \hat\Sigma^\dag \omega(\hat \theta, t) .
\label{eq:theta-hat-theta-star-expansion}
\end{equation}

On the event $\cE$, the main term $\hat Q(\theta^*) - \theta^*$ in the above formula satisfies
\begin{equation}
\|\hat Q(\theta^*) - \theta^*\|_2^2 \le \sigma^2 \frac{d-1}{2N} \tr(\hat \Sigma^\dag) + C \sigma^2 \left( \frac{d^{3/2}}{N}\sqrt{\log\frac{1}{\delta}}+ \frac{d}{N} \log \frac{1}{\delta} \right).
\label{eq:main-term-bd}
\end{equation}
The error term satisfies
\begin{align*}
\|\hat\Sigma^\dag A (\hat \theta - \theta^*) + \hat\Sigma^\dag \omega(\hat \theta, t) \|_\infty 
\le \|\hat\Sigma^\dag\|_\infty \|A\|_\infty \|\hat \theta - \theta^*\|_\infty + \|\hat\Sigma^\dag\|_\infty \|\omega(\hat \theta, t) \|_\infty 
\le o\left( \sigma \sqrt{d/N} \, \right) 
\end{align*}
in view of the bounds $\|\hat \Sigma^\dag\|_\infty \le 2L$, $\|\hat \theta - \theta^*\|_\infty \le C \sigma \sqrt{\frac{L\,d}{N}\log\frac d\delta}$, 
$\|A\|_\infty \le o\left( \frac{1}{L^{3/2} \sqrt{\log(d/\delta)}} \right)$, and
$\|\omega(\hat\theta,t)\|_\infty \le o\left( \frac{\sigma}{L} \sqrt{\frac{ d }{ N }} \, \right)$ on the event $\cE$.
It follows that
\begin{equation}
\|\hat\Sigma^\dag A (\hat \theta - \theta^*) + \hat\Sigma^\dag \omega(\hat \theta, t) \|_2^2
\le o\left( \sigma^2 d^2/N \right) .
\label{eq:err-term-bd}
\end{equation}
Finally, since $\tr(\hat \Sigma^\dag) \ge (d-1)/3$ on the event $\cE$, the first term of the bound in \eqref{eq:main-term-bd} dominates the bound in \eqref{eq:err-term-bd}.
Combining \eqref{eq:main-term-bd} and \eqref{eq:err-term-bd} with \eqref{eq:theta-hat-theta-star-expansion} then completes the proof.
\end{proof}

\subsection{Proof of main results}
\label{sec:pf-main-results}

\begin{proof}[Proof of Theorem~\ref{thm:em-contraction}]
The theorem follows from Proposition~\ref{prop:em-contraction} together with the assumption $\theta^* \in \Theta(\beta)$. 
We first check that the conditions in Theorem~\ref{thm:em-contraction} imply the conditions in \eqref{eq:cond-samp-init}.

\noindent {\bf Checking \eqref{eq:cond-1-sample-size}:}
On the one hand, and as claimed in the statement of the theorem, 
\begin{align} \label{eq:cond-thm}
N \ge  \max\{ d^{1+\rho}, N_0\}.
\end{align}
On the other hand, choosing $L = 1 + \lceil 2/\rho \rceil$ and $\delta = N^{-D}$ in Proposition~\ref{prop:em-contraction}, we obtain the sample size requirement
\begin{align} \label{eq:cond-prop}
N \ge C \max \Big\{ L^2 d^{\frac{L}{L-1}} \log \frac{dL}{\delta} , L^3 d \log \frac{L}{\delta} \Big\}.
\end{align}
It can be easily verified that for a sufficiently large constant $N_0 > 0$, \eqref{eq:cond-thm} implies~\eqref{eq:cond-prop}.

\noindent {\bf Checking \eqref{eq:cond-2-s-i-delta}:} Furthermore, since $\theta^* \in \Theta(\beta)$, if $|\theta_i^* - \theta_j^*| \leq \Delta$, then $|i-j| \le \Delta d/\beta$.
Therefore, we have 
$$
S_i(\Delta) 
= \{j \in [d] \setminus \{i\} : |\theta_i^* - \theta_j^*| \leq \Delta\}
\subset \{j \in [d] \setminus \{i\} : |i-j| \leq \Delta d/\beta\} ,
$$
so
$$
|S_i(\Delta)| \le 2 \Delta d/\beta .
$$
As a result, for $\Delta^{(0)} = \max\{ 4L \|\theta^{(0)} - \theta^*\|_\infty, 6 \sqrt{L} \, \sigma \}$, it holds that
\begin{equation}
\max_{i \in [d]} |S_i(\Delta^{(0)})| \le \max\{ 8L \|\theta^{(0)} - \theta^*\|_\infty d/\beta, 12 \sqrt{L} \, \sigma d/\beta \} \le \frac{d}{C L^2} 
\label{eq:s-i-delta-bd-inter}
\end{equation}
in view of the assumptions
$\|\theta^{(0)} - \theta^*\|_\infty \le c_1 \beta$ and $\sigma \le c_1 \beta$. 

It remains to compare $\tau$ in Theorem~\ref{thm:em-contraction} and Proposition~\ref{prop:em-contraction}: once we set $\delta = N^{-D}$, the two are the same up to multiplicative factors depending solely on the pair $(\rho, D)$.
\end{proof}

\begin{proof}[Proof of Theorem~\ref{thm:asymptotics}]
The theorem follows from Proposition~\ref{prop:asymptotics}.
Continuing from the proof of Theorem~\ref{thm:em-contraction}, we check the two additional conditions in \eqref{eq:prop-2-cond-set} compared to \eqref{eq:cond-samp-init}.

\noindent {\bf Checking \eqref{eq:condition-N}:}
Choosing $L = 1 + \lceil 2/\rho \rceil$ and $\delta = N^{-D}$ again, we can use the assumption $N \ge \max\{d^{1+\rho}, N_0\}$ to verify that
$$
N \gg L^4 \, d \left(\log \frac{N}{\delta}\right)^5 .
$$

\noindent {\bf Checking \eqref{eq:condition-Delta}:}
We again have $|S_i(\Delta)| \le 2 \Delta d/\beta$, so for $\Delta = 2 C \sigma \sqrt{\log \frac{N}{\delta}}$, it holds
$$
\max_{i \in [d]} |S_i(\Delta)| 
\le \frac{4 C \sigma d}{\beta} \sqrt{\log \frac{N}{\delta}} 
= o\left( \frac{d}{( L\log(N/\delta))^{3/2}} \right) 
$$
in view of the assumption
$\sigma = o\left( \frac{\beta}{(\log N)^{2}} \right) .$

\noindent {\bf Implication:}
Proposition~\ref{prop:asymptotics} then gives 
$$
\|\hat \theta - \theta^*\|_2^2 \le (1+o(1)) \, \sigma^2 \frac{d-1}{2N} \tr(\hat \Sigma^\dag) + C' \sigma^2 \left( \frac{d^{3/2}}{N}\sqrt{\log N}+ \frac{d}{N} \log N \right) .
$$
Finally, $\tr(\hat \Sigma^\dag) \ge (d-1)/3$ with high probability by Lemma~\ref{lem:trace-inv-lower}, so the first term above dominates the other terms when $d \gg \log N$.
Therefore, Theorem~\ref{thm:asymptotics} follows.
\end{proof}

\appendix

\section{Probability tools}

\begin{lemma}[Gaussian concentration, Theorem~5.2.2 of \cite{vershynin2018high}]
\label{lem:ver-5.2.2} 
Consider a random vector $X \sim \cN(0, I_n)$ and an $L$-Lipschitz function $f : \R^n \to \R$. Then we have
\[\Pr\{|f(X) - \EE[f(X)]| \geq t\} \leq 2\exp\left(-c \, t^2/L^2\right)\]
for an absolute constant $c>0$.
\end{lemma}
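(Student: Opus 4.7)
The plan is to establish this bound via the Maurey--Pisier--Borell interpolation trick, which gives a subgaussian MGF bound for $f(X) - \E[f(X)]$ directly from the Lipschitz hypothesis and the rotational invariance of the Gaussian measure; Markov's inequality then concludes. As a preliminary reduction, by convolving $f$ with a centered Gaussian density of vanishing variance and noting that convolution preserves the Lipschitz constant, one may assume $f$ is smooth with $\|\nabla f(x)\|_2 \le L$ for every $x$, and recover the general statement by a routine approximation.

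The main step is the MGF bound. Let $Y \sim \cN(0, I_n)$ be an independent copy of $X$; Jensen's inequality (applied to $\E_Y$ inside the exponential) gives
\[
\E\bigl[e^{\lambda(f(X) - \E f(X))}\bigr] \le \E\bigl[e^{\lambda(f(X) - f(Y))}\bigr].
\]
Interpolate between $X$ and $Y$ by setting $Z_\theta := X\cos\theta + Y\sin\theta$ and $Z'_\theta := -X\sin\theta + Y\cos\theta$ for $\theta \in [0,\pi/2]$; rotational invariance of the standard Gaussian implies that for each fixed $\theta$, $(Z_\theta, Z'_\theta)$ is a pair of \emph{independent} standard Gaussian vectors. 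The fundamental theorem of calculus yields
\[
f(Y) - f(X) = \int_0^{\pi/2} \nabla f(Z_\theta) \cdot Z'_\theta \, d\theta.
\]
View $(2/\pi)\,d\theta$ as a probability measure on $[0,\pi/2]$ and apply Jensen once more to pull the exponential inside:
\[
\E\bigl[e^{\lambda(f(X) - f(Y))}\bigr] \le \frac{2}{\pi} \int_0^{\pi/2} \E\Bigl[e^{-(\pi\lambda/2)\,\nabla f(Z_\theta)\cdot Z'_\theta}\Bigr]\, d\theta.
\]
For each $\theta$, condition on $Z_\theta$: the inner product $\nabla f(Z_\theta)\cdot Z'_\theta$ is, conditionally, a centered Gaussian with variance $\|\nabla f(Z_\theta)\|_2^2 \le L^2$, so its conditional MGF at $\pi\lambda/2$ is at most $\exp(\pi^2 \lambda^2 L^2/8)$. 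Combining the above gives $\E[e^{\lambda(f(X)-\E f(X))}] \le \exp(\pi^2 \lambda^2 L^2/8)$.

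Chernoff's bound and optimization over $\lambda > 0$ produce the one-sided tail $\Pr\{f(X) - \E f(X) \ge t\} \le \exp(-2t^2/(\pi^2 L^2))$; applying the same argument to $-f$ and taking a union bound delivers the two-sided statement with absolute constant $c = 2/\pi^2$. The only delicate point in the proof is the initial smoothing reduction, since $f$ is assumed merely Lipschitz and $\nabla f$ need not exist everywhere; the standard workaround of mollifying $f$ by a Gaussian and sending the bandwidth to zero handles this without losing the Lipschitz constant, so this is more a matter of care than of genuine difficulty.
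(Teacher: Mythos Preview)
Your proof is correct: this is the standard Maurey--Pisier interpolation argument, and each step (the smoothing reduction, the independent-copy Jensen trick, the rotation $(Z_\theta,Z'_\theta)$, the conditional Gaussian MGF bound, and the Chernoff optimization) is carried out properly, yielding the explicit constant $c=2/\pi^2$.

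The paper itself does not supply a proof of this lemma; it merely quotes the result as Theorem~5.2.2 of Vershynin's book and uses it as a black box in the appendix on probability tools. So there is nothing in the paper to compare against. For what it is worth, the argument you give is essentially the one Vershynin presents (attributed to Pisier), so your proposal aligns with the cited source.
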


\begin{definition}[Convex concentration property]
\label{def:conv_conc_prop}
Let $X$ be a random vector in $\R^n$. We say that $X$ has the convex concentration property with constant $K$ if for every $1$-Lipschitz convex function $\phi\,:\, \R^n\to \R$, we have $\EE[|\phi(X)|] < \infty$, and for every $t > 0$,
\[\Pr\{|\phi(X) - \EE[\phi(X)]| \geq t\} \leq 2\exp\left(-t^2/K^2\right).\]
\end{definition}

\begin{lemma}[Theorem~2.3 of \cite{adamczak2015note}]
\label{lem:conv_conc}
Let $X$ be a mean zero random vector in $\R^n$. If $X$ has the convex concentration property with constant $K$, then for any $A \in \R^{n \times n}$ and any $t > 0$,
$$
\Pr\{|X^\top AX - \EE[X^\top AX]| \geq t\} 
\leq 2\exp\left(-\frac{1}{C}\min\left(\frac{t^2}{K^4\|A\|_F^2}, \frac{t}{K^2\|A\|_{op}}\right)\right) ,
$$
for an absolute constant $C>0$.
\end{lemma}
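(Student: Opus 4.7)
The plan is to establish this Hanson--Wright style inequality by reducing the quadratic form $X^\top A X$ to squared norms of linear images of $X$ and then exploiting the convex concentration property applied to Lipschitz convex functions. Since $X^\top A X$ depends only on the symmetric part of $A$, we may assume $A = A^\top$ without loss of generality. Writing $A = A_+ - A_-$ for its positive and negative semidefinite parts and controlling each term separately via a union bound, the task reduces to the concentration of $X^\top B X = \|B^{1/2} X\|_2^2$ for an arbitrary PSD matrix $B$ with $\|B\|_{op} \le \|A\|_{op}$.

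The core step is to apply the convex concentration property to the map $f(x) := \|B^{1/2} x\|_2$, which is convex and $\|B\|_{op}^{1/2}$-Lipschitz in the Euclidean norm. This yields a sub-Gaussian tail
\[
\Pr\big\{ \big| \|B^{1/2}X\|_2 - M \big| \ge s \big\} \le 2 \exp\!\big( - s^2 / (K^2 \|B\|_{op}) \big), \qquad M := \E \|B^{1/2}X\|_2.
\]
To pass from the norm to its square, I would use the factorization $Y^2 - M^2 = (Y-M)(Y+M)$ with $Y = \|B^{1/2}X\|_2$: when $|Y - M| \le s \le M$ we have $|Y^2 - M^2| \le 3 M s$ (Gaussian regime), while when $s \ge M$ we have $|Y^2 - M^2| \le 2 s^2$ (sub-exponential regime). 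Combining these produces a mixed tail for $Y^2 - M^2$ with Gaussian parameter $M^2 K^2 \|B\|_{op}$ and sub-exponential parameter $K^2 \|B\|_{op}$. Recentering to $\E Y^2$ costs only $|\E Y^2 - M^2| = \Var(Y) \lesssim K^2 \|B\|_{op}$, which is absorbable into the sub-exponential parameter.

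To make the Gaussian parameter explicit, I would apply the convex concentration property to each $1$-Lipschitz linear functional $v^\top X$: this shows that $v^\top X$ is sub-Gaussian with parameter $O(K)$, and hence the covariance $\Sigma := \E[X X^\top]$ satisfies $\|\Sigma\|_{op} \lesssim K^2$. Consequently $M^2 \le \E Y^2 = \tr(B \Sigma) \lesssim K^2 \tr(B)$, producing an intermediate Hanson--Wright bound of the form
\[
\Pr\big\{ |X^\top B X - \E X^\top B X| \ge t \big\} \le 2\exp\!\Big( -c \min\!\big\{ t^2/(K^4 \tr(B)\|B\|_{op}),\ t/(K^2 \|B\|_{op}) \big\} \Big).
\]

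The main obstacle is that $\tr(B)\|B\|_{op}$ can exceed $\|B\|_F^2$ when the spectrum of $B$ is spread out, so the display above is strictly weaker than the target. To attain the sharp Frobenius dependence, I would perform a dyadic decomposition of the spectrum: let $B = \sum_{k \ge 0} B^{(k)}$ where $B^{(k)}$ is the spectral projection of $B$ onto eigenvalues lying in the band $[2^{-k-1}\|B\|_{op},\, 2^{-k}\|B\|_{op})$. On each band, $\tr(B^{(k)}) \|B^{(k)}\|_{op}$ is within a constant factor of $\|B^{(k)}\|_F^2$, so the intermediate bound applied to $X^\top B^{(k)} X$ is already sharp. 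Allocating a deviation budget proportional to $\|B^{(k)}\|_F^2 / \|B\|_F^2 \cdot t$ at the $k$th level and taking a union bound over the $O(\log \rank(B))$ nonempty levels (a logarithmic cost absorbed into the constant $c$ after slight inflation) yields the stated inequality with $\|A\|_F^2$ in the Gaussian term.
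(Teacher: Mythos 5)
The paper does not prove this lemma; it simply cites Theorem~2.3 of \citet{adamczak2015note}, so there is no in-paper proof to compare against. Your attempt is a genuine reconstruction, and the early steps are sound: reducing to symmetric positive semidefinite $B$, applying the convex concentration property to the $\|B\|_{op}^{1/2}$-Lipschitz convex map $x\mapsto \|B^{1/2}x\|_2$, squaring via the factorization $Y^2-M^2=(Y-M)(Y+M)$, and bounding $\|\Cov(X)\|_{op}\lesssim K^2$ through linear functionals all work and yield the intermediate bound with Gaussian parameter $K^4\tr(B)\|B\|_{op}$ in place of $K^4\|B\|_F^2$.

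The gap is in the final upgrade via the dyadic spectral decomposition and union bound. With the proposed allocation $t_k \propto \|B^{(k)}\|_F^2$, writing $a_k := \|B^{(k)}\|_F^2/\|B\|_F^2$, the per-level Gaussian exponent is
\[
\frac{t_k^2}{K^4\|B^{(k)}\|_F^2} \asymp a_k\,\frac{t^2}{K^4\|B\|_F^2},
\]
i.e., the target exponent multiplied by $a_k \le 1$. When the Frobenius mass is spread over many dyadic bands (so that some $a_k$ is as small as $1/m$ with $m\asymp\log\rank(B)$ nonempty levels, or even smaller for geometrically decaying spectra), the weakest level dominates the union bound and you only obtain an exponent that is a vanishing fraction of the target. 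Moreover, the union bound prefactor $m$ itself cannot be absorbed into a universal constant $C$: to convert $m\,e^{-c_0}$ into $e^{-c_0/C}$ uniformly in $t>0$ requires $C \gtrsim \log m$, which depends on the ambient dimension. So the stated argument proves a strictly weaker inequality (with $\tr(B)\|B\|_{op}$, or with a dimension-dependent constant), not the sharp $\|A\|_F^2$ form. Adamczak's actual proof avoids this by working at the level of the Laplace transform of the (decoupled) quadratic form rather than by a union bound over spectral bands; a straightforward union-bound repair of your decomposition does not seem to recover the Frobenius-norm dependence with an absolute constant.
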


\begin{lemma}[Matrix Bernstein, Theorem~1.6.2 of \cite{tropp2015introduction}]\label{lem:matrix_bernstein}
Consider a finite sequence $\{X_k\}_{k=1}^N$ of independent, random, Hermitian matrices in $\R^{d \times d}$. Assume that
\[\EE[X_k] = 0 \quad \text{and} \quad \|X_k\|_{op} \leq L \quad \text{for each } k \in [N] .\]
Introduce the random matrix
\[Y = \sum_{k=1}^N X_k.\]
Let $v(Y)$ be the matrix variance statistic of the sum:
\[v(Y) = \|\EE[Y^2]\| = \bigg\|\sum_{k=1}^N \EE[X_k^2]\bigg\|.\]
Then
\[\EE[ \|Y\| ] \leq \sqrt{2v(Y) \log (2d)} + \frac{1}{3}\,L\,\log (2d).\]
Furthermore, for all $t > 0$,
\[\Pr\{ \|Y\| \geq t\} \leq 2 d\,\exp\left(-\frac{t^2/2}{v(Y) + Lt/3}\right).\]
\end{lemma}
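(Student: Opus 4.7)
The plan is to follow the matrix Laplace transform method of Ahlswede--Winter as streamlined by Tropp. First I would observe $\|Y\| = \max\{\lambda_{\max}(Y), \lambda_{\max}(-Y)\}$ and reduce to a one-sided bound on $\lambda_{\max}(Y)$; the other side follows by replacing $X_k$ with $-X_k$, and a union bound produces the factor $2$ in $2d$. The one-sided reduction proceeds via the matrix Chernoff step: $\lambda_{\max}(\theta Y) \leq \log \tr \exp(\theta Y)$ for any $\theta > 0$, combined with Markov's inequality, gives $\Pr\{\lambda_{\max}(Y) \geq t\} \leq e^{-\theta t}\,\EE[\tr \exp(\theta Y)]$.

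The crux is bounding $\EE[\tr \exp(\theta Y)]$ for $Y = \sum_k X_k$ a sum of independent Hermitian matrices, since the matrices do not commute and so $e^{\sum_k \theta X_k} \neq \prod_k e^{\theta X_k}$. I would invoke a corollary of Lieb's concavity theorem, namely that $H \mapsto \tr \exp(H + \log M)$ is concave on the cone of positive definite $M$; combined with Jensen's inequality applied iteratively over $k = 1, \dots, N$, this yields the subadditivity of matrix cumulant generating functions:
\[
\EE\Bigl[\tr \exp\!\Bigl(\theta \sum_{k=1}^N X_k\Bigr)\Bigr] \leq \tr \exp\!\Bigl(\sum_{k=1}^N \log \EE[e^{\theta X_k}]\Bigr).
\]
This is the conceptually hard part and the place where noncommutativity is genuinely handled; everything downstream is scalar calculus and operator-monotone bookkeeping.

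Given subadditivity, the next step is a Bernstein-style bound on each summand's matrix mgf. Using the scalar inequality $e^x - 1 - x \leq \tfrac{x^2/2}{1 - |x|/3}$ valid for $|x| < 3$, applied in the eigenbasis of each $X_k$ together with $\EE[X_k] = 0$ and $\|X_k\| \leq L$, I would obtain, for $0 < \theta < 3/L$,
\[
\EE[e^{\theta X_k}] \preceq \exp\!\left(\tfrac{\theta^2/2}{1 - \theta L/3}\, \EE[X_k^2]\right).
\]
Taking operator logarithms, summing, and using the definition $v(Y) = \|\sum_k \EE[X_k^2]\|$ together with trace monotonicity then yields $\EE[\tr \exp(\theta Y)] \leq d \exp\!\bigl(\tfrac{\theta^2 v(Y)/2}{1 - \theta L/3}\bigr)$, where the factor $d$ comes from bounding $\tr \exp(\cdot)$ on $\R^{d \times d}$ by $d$ times the operator exponential.

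Plugging this into the matrix Chernoff step and optimizing with the choice $\theta = t/(v(Y) + Lt/3)$ produces the stated tail bound after combining the two sides. The expectation bound finally follows by integration: writing $\EE[\|Y\|] = \int_0^\infty \Pr\{\|Y\| \geq t\}\,dt$ and splitting at the crossover between the sub-Gaussian regime $v(Y) \gg Lt$ and the sub-exponential regime $Lt \gg v(Y)$ recovers the two summands $\sqrt{2v(Y)\log(2d)}$ and $\tfrac{1}{3}L\log(2d)$ after elementary estimates. The main obstacle is importing Lieb's theorem (or an equivalent deep noncommutative input such as Golden--Thompson, which yields a slightly worse constant); once that tool is in hand the argument is a faithful matrix analogue of the classical scalar Bernstein proof.
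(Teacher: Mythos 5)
This lemma is imported by the paper directly from Tropp's monograph (Theorem~1.6.2 of~\cite{tropp2015introduction}); no proof is given in the paper. Your plan is a faithful reconstruction of Tropp's argument: matrix Laplace transform plus Markov, subadditivity of matrix cumulant generating functions via Lieb's concavity theorem, the Bernstein matrix mgf bound $\EE[e^{\theta X_k}] \preceq \exp\!\bigl(\tfrac{\theta^2/2}{1-\theta L/3}\,\EE[X_k^2]\bigr)$, trace monotonicity, and optimization over $\theta$. All of that is correct and is indeed the canonical route.

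One imprecision in the last step: deriving the expectation bound by \emph{integrating} the tail does not recover the stated constants. Integrating $\Pr\{\|Y\|\ge t\}\le 2d\exp\!\bigl(-\tfrac{t^2/2}{v+Lt/3}\bigr)$ with a crossover at the point where the bound equals $1$ yields a coefficient on the $L\log(2d)$ term closer to $\tfrac{2}{3}$ (plus additional additive slack from the tail integral), not $\tfrac{1}{3}$. The sharp constants come instead from the \emph{master expectation bound}: apply Jensen's inequality to get
\[
\EE[\lambda_{\max}(Y)] \;\le\; \inf_{\theta>0}\,\tfrac{1}{\theta}\,\log \EE\bigl[\tr e^{\theta Y}\bigr]
\;\le\; \inf_{0<\theta<3/L}\,\tfrac{1}{\theta}\Bigl(\log d + \tfrac{\theta^2 v/2}{1-\theta L/3}\Bigr),
\]
and optimize the scalar function on the right. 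To convert this one-sided eigenvalue statement into the stated bound on $\|Y\|$ with $\log(2d)$, one applies the argument to the block matrix $\mathrm{diag}(Y,-Y)$ in dimension $2d$ (or equivalently to the Hermitian dilation), whose top eigenvalue equals $\|Y\|$; the dimension $2d$ is what produces the $\log(2d)$, not a union bound over two tail events. Everything else in your plan is sound.
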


\begin{lemma}[Lemma~2.27 of \cite{wainwright2019high}]\label{lem:wainwright}
Let $f \,:\, \R^N \to \R$ be a differentiable function. Then for every convex function $\phi\,:\,\R\to \R$, we have
\[\EE[\phi(f(X) - \EE[f(X)])] \leq \EE\left[\phi\left(\frac{\pi}{2}\langle\nabla f, Y\rangle\right)\right],\]
where $X,\,Y \sim \mathcal{N}(0, I_N)$ are independent standard Gaussians.
\end{lemma}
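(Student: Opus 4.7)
The plan is to follow the classical Gaussian interpolation (rotation) argument together with two applications of Jensen's inequality, one to symmetrize the mean and one to move $\phi$ inside an integral. Throughout, $X, Y \sim \cN(0, I_N)$ are independent.

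First, I would symmetrize. Let $Y$ be an independent copy, so $\EE_Y[f(Y)] = \EE[f(X)]$. Since $\phi$ is convex, Jensen's inequality (conditional on $X$) gives
\[
\phi(f(X) - \EE[f(X)]) = \phi\bigl(\EE_Y[f(X) - f(Y)]\bigr) \le \EE_Y\bigl[\phi(f(X) - f(Y))\bigr].
\]
Taking expectation in $X$ yields $\EE[\phi(f(X) - \EE[f(X)])] \le \EE[\phi(f(X) - f(Y))]$, which reduces the problem to controlling the symmetric difference $f(X) - f(Y)$.

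Next, I would introduce the rotational interpolation
\[
X(\theta) := \sin(\theta) X + \cos(\theta) Y, \qquad X'(\theta) := \cos(\theta) X - \sin(\theta) Y, \qquad \theta \in [0, \pi/2].
\]
For each fixed $\theta$, the pair $(X(\theta), X'(\theta))$ is obtained from $(X, Y)$ by an orthogonal transformation, so by rotational invariance of the standard Gaussian measure on $\R^{2N}$, it has the same joint distribution as $(X, Y)$; in particular $X(\theta)$ and $X'(\theta)$ are independent standard Gaussians. Noting $X(0) = Y$ and $X(\pi/2) = X$, the fundamental theorem of calculus combined with the chain rule gives
\[
f(X) - f(Y) = \int_0^{\pi/2} \frac{d}{d\theta} f(X(\theta))\, d\theta = \int_0^{\pi/2} \bigl\langle \nabla f(X(\theta)), X'(\theta) \bigr\rangle\, d\theta.
\]

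Then I would rewrite the right-hand side as $\frac{\pi}{2}$ times an average over the uniform probability measure on $[0, \pi/2]$, and apply Jensen's inequality a second time, now pulling $\phi$ inside the integral:
\[
\phi(f(X) - f(Y)) = \phi\!\left( \frac{2}{\pi} \int_0^{\pi/2} \frac{\pi}{2} \bigl\langle \nabla f(X(\theta)), X'(\theta) \bigr\rangle\, d\theta \right) \le \frac{2}{\pi} \int_0^{\pi/2} \phi\!\left( \frac{\pi}{2} \bigl\langle \nabla f(X(\theta)), X'(\theta) \bigr\rangle \right) d\theta.
\]
Taking expectation, applying Fubini, and using that $(X(\theta), X'(\theta)) \stackrel{d}{=} (X, Y)$ for every $\theta$, each integrand evaluates to $\EE[\phi(\tfrac{\pi}{2} \langle \nabla f(X), Y \rangle)]$, independent of $\theta$, so the integral collapses to this common value and the claim follows.

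The arguments are entirely classical; the only subtle step is the interpolation identity, which is where the constant $\pi/2$ enters (through the length of the quarter-circle rotating $Y$ into $X$). No step is genuinely hard, though one should assume enough regularity on $f$ (e.g.\ Lipschitz or polynomial growth of $\nabla f$) for Fubini and the differentiation under the integral to be justified; this is the standard setting in which the inequality is invoked, including in the application to $f_w$ in the proof of Lemma~\ref{lem:theta_infty_bd}.
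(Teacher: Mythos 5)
Your proof is correct, and it reproduces exactly the Maurey--Pisier rotational interpolation argument used in Wainwright's proof of Lemma~2.27, which the paper cites without reproving: symmetrize via an independent copy, interpolate along the quarter-circle $X(\theta) = \sin\theta\, X + \cos\theta\, Y$, apply Jensen to push $\phi$ inside the $\theta$-integral, and use that $(X(\theta), X'(\theta)) \stackrel{d}{=} (X, Y)$ for every fixed $\theta$. The one regularity caveat you flag (justifying the FTC/Fubini steps) is the same one Wainwright handles by assuming $f$ differentiable with, e.g., $\nabla f$ of moderate growth, and it is harmless in the paper's application since $f_w$ there has bounded gradient.
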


\section{Improved sample complexity}
\label{sec:d-polylog-d}
In Theorem~\ref{thm:em-contraction}, we assume the conditions
\begin{itemize}
\item
(sample size) $N \ge d^{1+\rho}$;

\item
(noise) $\sigma \le c_1 \beta$;

\item
(initialization) $\|\theta^{(0)} - \theta^*\|_\infty \le c_1 \beta$.
\end{itemize}
It is possible to improve the sample complexity at the cost of strengthening the other two conditions.
Namely, we may assume the alternative conditions
\begin{itemize}
\item
(sample size) $N \ge d \, (\log d)^{C_1}$;

\item
(noise) $\sigma \le \frac{c_2 \, \beta}{(\log d)^{5/2}}$;

\item
(initialization) $\|\theta^{(0)} - \theta^*\|_\infty \le \frac{c_2 \, \beta}{(\log d)^3}$.
\end{itemize}
To see the sufficiency of this set of conditions, we apply Proposition~\ref{prop:em-contraction} with $L = \big\lceil \frac{\log d}{\log \log d} \rceil$ and $\delta = N^{-D}$ for a constant $D > 0$.
It suffices to check the two conditions in \eqref{eq:cond-samp-init}.
We have
$$
d^{\frac{1}{L-1}} \le d^{\frac{2 \log \log d}{\log d}} = (\log d)^{2} .
$$
Therefore, if $N \ge d \, (\log d)^{C_1}$ for a sufficiently large constant $C_1 = C_1(D) > 0$, then
$$
N \ge C \max \Big\{ L^2 d^{\frac{L}{L-1}} \log \frac{dL}{\delta} , L^3 d \log \frac{L}{\delta} \Big\} .
$$
Further, as in \eqref{eq:s-i-delta-bd-inter}, we have
\begin{equation*}
\max_{i \in [d]} |S_i(\Delta^{(0)})| \le \max\{ 8L \|\theta^{(0)} - \theta^*\|_\infty d/\beta, 12 \sqrt{L} \, \sigma d/\beta \} \le \frac{d}{C L^2} 
\end{equation*}
provided that $\|\theta^{(0)} - \theta^*\|_\infty \le \frac{\beta}{8 C L^3}$ and $\sigma \le \frac{\beta}{12 C L^{5/2}}$.
These are satisfied once we choose $c_2 > 0$ to be sufficiently small.

\section{Minimax lower bounds with a fixed design}
\label{sec:lin-reg-rate}

If the signs $z_r$ are given in the model \eqref{eq:model_symmetric} and the covariates $x_r$ are fixed, then the problem reduces to linear regression with a fixed design.
It is an elementary fact that the least squares estimator $\hat \theta^{\mathsf{LS}}$ achieves the risk
$$
\E \|\hat \theta^{\mathsf{LS}} - \theta^*\|_2^2 = \sigma^2 \tr\bigg( \Big( \sum_{r=1}^N x_r x_r^\top \Big)^\dag \bigg) . 
$$
It is well-known that this risk is minimax-optimal.
To show that $\sigma^2 \tr\Big( \big( \sum_{r=1}^N x_r x_r^\top \big)^\dag \Big)$ is a lower bound on the minimax risk, the standard proof is via the Bayes risk assuming that $\theta^*$ has the prior distribution $\cN \Big( 0, \tau^2 \big( \sum_{r=1}^N x_r x_r^\top \big)^\dag \Big)$ for a parameter $\tau$.
One can compute the Bayes-optimal estimator which is the posterior mean of $\theta^*$.
Then the Bayes risk achieved by the posterior mean evaluates to $\frac{\tau^2}{1+\tau^2} \sigma^2 \tr \Big( \big( \sum_{r=1}^N x_r x_r^\top \big)^\dag \Big)$.
Since the Bayes risk is a lower bound on the minimax risk, letting $\tau \to \infty$ proves the desired lower bound.

If the signs $z_r$ are unknown in the mixture model \eqref{eq:model_symmetric} and the covariates $x_r$ are fixed, then $\sigma^2 \tr\Big( \big( \sum_{r=1}^N x_r x_r^\top \big)^\dag \Big)$ is still a lower bound on the minimax risk for estimating $\theta^*$.
Therefore, conditional on a typical realization of $(x_r)_{r=1}^N$, the proof of Theorem~\ref{thm:asymptotics} in fact shows that the EM fixed point $\hat \theta$ achieves the minimax rate with the sharp constant (see Theorem~\ref{thm:asymptotics} and Eq.~\eqref{eq:sharp-rate}).
Since this work assumes a random design of the covariates, to keep our terminology consistent, we do not further formalize a minimax result in the fixed-design setting.

\section*{Acknowledgments}
AD was supported in part by NSF grant DMS-2053333.
CM was supported in part by NSF grants DMS-2053333 and DMS-2210734.
AP was supported in part by NSF grants CCF-2107455 and DMS-2210734, and by Research Awards from Adobe and Amazon.
CM thanks Dylan J.\ Altschuler and Anderson Ye Zhang for helpful discussions.

\bibliographystyle{abbrvnat}
\bibliography{references}




\end{document}